\theoremstyle{plain}
\newtheorem{theorem}{Theorem}[section]
\newtheorem{lemma}[theorem]{Lemma}
\newtheorem{proposition}[theorem]{Proposition}
\newtheorem{corollary}[theorem]{Corollary}
\theoremstyle{definition}
\newtheorem{definition}[theorem]{Definition}
\newtheorem{remark}[theorem]{Remark}
\newtheorem{remarks}[theorem]{Remarks}
\numberwithin{equation}{theorem}
\newtheorem{claim}[theorem]{Claim}
\begin{document}

\title[Actions of $\mu_p$ in characteristic $p>0$.]{Actions of $\mu_p$ on canonically polarized surfaces in characteristic $p>0$.}
\author{Nikolaos Tziolas}
\address{Department of Mathematics, University of Cyprus, P.O. Box 20537, Nicosia, 1678, Cyprus}
\email{tziolas@ucy.ac.cy}
\thanks{Part of this paper was partially written during the author's stay at the Max Planck Institute for Mathematics in Bonn, from February 1 2019 to July 31 2019.}

\subjclass[2010]{Primary 14J50, 14DJ29, 14J10; Secondary 14D23, 14D22.}


\keywords{Algebraic geometry, canonically polarized surfaces, automorphisms, group scheme actions, characteristic p.}

\begin{abstract}
This paper studies the existence of non trivial $\mu_p$ actions on a canonically polarized surface $X$ defined over an algebraically closed field of characteristic $p>0$. In particular, an explicit function $f(K_X^2)$ is obtained such that if $p>f(K_X^2)$, then there does not exist a non trivial $\mu_p$-action on $X$. This implies that the connected component of $\mathrm{Aut}(X)$ containing the identity is either smooth or is obtained by successive extensions by $\alpha_p$.
\end{abstract}

\maketitle

\section{Introduction}
A normal projective surface $X$ defined over an algebraically closed field of characteristic $p>0$ is called canonically polarized if and only if $X$ has canonical singularities and $K_X$ is ample. Canonically polarized surfaces are the canonical models of smooth surfaces of general type.

The objective of this paper is to investigate the existence of nontrivial $\mu_p$-actions on a canonically polarized surface $X$.  This is equivalent to the existence of nontrivial global vector fields $D$ on $X$ such that $D^p=D$~\cite{Tz17b}.

Nontrivial actions of $\mu_p$ appear quite naturally in the classification of surfaces in characteristic $p>0$ given by Bombieri and Mumford~\cite{BM76},~\cite{BM77}. In particular, many classes of surfaces can be constructed as a $\mu_p$-quotient of a singular surface with mild singularities. For example, a classical Godeaux surface is the quotient of a singular hypersurface of degree 5 in $\mathbb{P}^3$ by a nontrivial $\mu_p$-action~\cite{La81} and many K3 and Enriques surfaces are $\mu_p$-quotients of a K3 or Enriques surface with canonical singularities~\cite{Ma20}.

The existence of nontrivial $\mu_p$-actions on canonically polarized surfaces is intimately related to the structure of the global and local moduli problems of canonically polarized surfaces. It is well known that in characteristic zero, the moduli stack of canonically polarized surfaces is a Deligne-Mumford stack and the versal deformation functor $Def(X)$ is pro-representable, for any canonically polarized surface $X$. However, both of these properties are not always true in positive characteristic. The reason of this failure is the existence of canonically polarized surfaces with non reduced automorphisms scheme, a situation that appears exclusively in positive characteristic since every group scheme in characteristic zero is smooth. Examples of smooth surfaces with non reduced automorphism scheme were obtained in~\cite{La83},~\cite{SB96}~\cite{Li08} and singular ones in~\cite{Tz17},~\cite{Tz19}.  

The automorphism scheme of a canonically polarized surface $X$ is a zero dimensional group scheme of finite type over the base field. Therefore, its non reducedness is equivalent to  the non triviality of its tangent space at the point corresponding to the identity automorphism and therefore with the existence of a nontrivial global vector field $D$ on $X$ such that $D^p=0$ or $D^p=D$, or equivalently with a non trivial $\alpha_p$ or $\mu_p$ on $X$. It is therefore interesting, from the moduli point of view, to find conditions which imply that the automorphism scheme of a canonically polarized surface is reduced and hence in this case its moduli theory is similar to the one in characteristic zero. 

It s known~\cite{Tz17} that for any integer $m>0$, there exists a function $f(m)$, which depends only on $m$, such that if $X$ is a canonically polarized surface such that $K_X^2=m$, defined over an algebraically closed field $k$ of characteristic $p>f(m)$, then the automorphism scheme $\mathrm{Aut}(X)$ is reduced.  The research presented in this paper is part of a wider research aimed at obtaining an explicit such function $f(m)$. For the reasons explained  earlier, this project naturally splits into two separate ones. To find explicit conditions which imply the non existence of non trivial $\mu_p$-actions and also explicit conditions which imply the non existence of non trivial $\alpha_p$-actions on a canonically polarized surface. This paper deals with the case of non trivial $\mu_p$-actions.

Closely related results have been found in~\cite{Tz17},~\cite{Tz18},~\cite{Tz19}. In particular, if $X$ is a canonically polarized surface, an explicit  function $f(K_X^2)$ has been found such that if $p>f(K_X^2)$ and $\mathrm{Aut}(X)$ is not reduced, then $X$ is unirational and simply connected. These relations do not imply the reducedness of the automorphism scheme but instead show that if the characteristic $p$ is large enough then the geometry of a canonically polarized surface with non reduced automorphism scheme is very restricted.

The main result of this paper is contained in Theorem~\ref{sec3-th-1}. However its statement is rather complicated and unsuitable for the introduction of a paper since its complexity might obscure its meaning. The following theorem,  is derived by elementary and straightforward arguments from Theorem~\ref{sec3-th-1} and is indicative of the results obtained in Theorem~\ref{sec3-th-1}.
  
\begin{theorem}\label{main-theorem}
Let $X$ be a canonically polarized surface with canonical singularities defined over an algebraically closed field $k$ of characteristic $p>0$. Suppose that
\[
\ln p> 9! e^{7K_X^2}.
\]
Then there does not exist a nontrivial $\mu_p$ action on $X$. In particular, the component containing the identity of the automorphism group scheme $\mathrm{Aut}(X)$ is a finite group scheme which is  obtained by successive extensions by $\alpha_p$.
\end{theorem}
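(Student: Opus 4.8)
The plan is to obtain Theorem~\ref{main-theorem} as an essentially formal consequence of the main technical result, Theorem~\ref{sec3-th-1}, which produces an \emph{explicit} function $f$ such that the existence of a nontrivial $\mu_p$-action on $X$ forces $p\le f(K_X^2)$. Granting this, two steps remain: a purely numerical check that the hypothesis $\ln p>9!\,e^{7K_X^2}$ implies $p>f(K_X^2)$, and a short group-scheme argument that reads the structure of the identity component of $\mathrm{Aut}(X)$ off the non-existence of $\mu_p$-actions.

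For the numerical step I would take the bound $f(m)$ assembled in the proof of Theorem~\ref{sec3-th-1} — built by composing, multiplying and iterating a fixed finite list of elementary operations on $m=K_X^2$ (polynomials, exponentials, and a bounded number of factorials entering through the estimates on the fixed locus of the action and the attendant Riemann--Roch computations) — and bound it crudely from above by a single clean closed form, of the shape $f(m)\le \exp(c!\cdot e^{am})$ for suitable absolute constants $a,c$. The particular values $9!$ and $7$ in the statement are chosen precisely so as to dominate those constants, so that $\ln p>9!\,e^{7m}\ge \ln f(m)$, i.e.\ $p>f(m)$, and Theorem~\ref{sec3-th-1} then yields the non-existence of a nontrivial $\mu_p$-action on $X$. (By~\cite{Tz17b} this is equivalent to the non-existence of a nonzero global vector field $D$ on $X$ with $D^p=D$, but that reformulation is not needed here.) I expect this part to be entirely routine; the only care required is to single out, among the inequalities used in the proof of Theorem~\ref{sec3-th-1}, the ones that actually control the growth of $f$.

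For the structural statement, recall that $X$, being canonically polarized, is the canonical model of a surface of general type. Its reduced identity automorphism group $\mathrm{Aut}^0(X)_{\mathrm{red}}$ is a connected algebraic group acting faithfully on $X$, hence trivial since $X$ is of general type; therefore $\mathrm{Aut}(X)$ is a finite group scheme over $k$ and $G:=\mathrm{Aut}^0(X)$ is a finite connected — hence infinitesimal — group scheme. Since $\mu_p$ has no nontrivial proper subgroup scheme, a nontrivial homomorphism $\mu_p\to\mathrm{Aut}(X)$ is a closed immersion whose (connected) image lies in $G$; so the first part of the proof says exactly that $G$ contains no subgroup scheme isomorphic to $\mu_p$. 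Over an algebraically closed field every nontrivial finite connected group scheme of multiplicative type contains $\mu_p$, because it is a product of groups $\mu_{p^{n_i}}$; therefore $G$ has no nontrivial multiplicative-type subgroup scheme, and by the standard structure theory of finite group schemes $G$ is unipotent. A finite connected unipotent group scheme over an algebraically closed field admits a composition series all of whose successive quotients are connected unipotent simple, i.e.\ copies of $\alpha_p$; this is precisely the assertion that $G$ is obtained by successive extensions by $\alpha_p$ (vacuously so when $G=\{e\}$).

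The genuine difficulty of the paper lies entirely in Theorem~\ref{sec3-th-1} — extracting an effective bound on $p$ in terms of $K_X^2$ from the geometry of the quotient $X/\mu_p$ and of the foliation defined by the action — whereas, as the introduction indicates, the passage to Theorem~\ref{main-theorem} is only a matter of crude estimation together with well-known facts about infinitesimal group schemes. Accordingly I would keep the numerical comparison brief and refer to the literature for the unipotent-filtration statement.
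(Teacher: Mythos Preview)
Your proposal is correct and matches the paper's approach: the paper explicitly states that Theorem~\ref{main-theorem} ``is an easy consequence of [Theorem~\ref{sec3-th-1}] and its proof is omitted,'' so the intended argument is exactly the crude numerical majorization of the explicit bound in Theorem~\ref{sec3-th-1} by $\exp(9!\,e^{7K_X^2})$ that you outline, together with the standard infinitesimal group-scheme argument (which the paper in fact already folds into the conclusion of Theorem~\ref{sec3-th-1} itself). One small inaccuracy: no factorials actually occur in the bound of Theorem~\ref{sec3-th-1}; the constant $9!$ is simply a convenient absolute constant large enough to absorb the coefficients arising from the iterated exponentials $d$, $n$, $m$, $N$ in that statement.
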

The inequality in Theorem~\ref{main-theorem} is a crude estimate of a much finer, but more complicated, inequality which appears in  Theorem~\ref{sec3-th-1}.  The result in Theorem~\ref{main-theorem} is therefore not optimal. However, its value is twofold. It is simple and indicative of how much bigger $p$ must be relative to $K_X^2$ before one can conclude that no $\mu_p$ action exists and hence before the automorphism scheme becomes smooth . In Theorem~\ref{sec3-th-1}, a function $f(t)$ has been obtained such that the inequality $p>f(K_X^2)$ implies the non existence of a non trivial $\mu_p$-action on $X$. The function $f(t)$ is a polynomial in $t$, $e^t$,  $e^{e^{at}}$ and $e^{te^{bt}}$, where $a,b\in \mathbb{Q}$ and $1\leq a,b\leq 7$. Hence $p$ must grow in a doubly exponential way with respect to $K_X^2$ before one can conclude that there does not exist a non trivial $\mu_p$-action. This fact is indicated in Theorem~\ref{main-theorem}.

The results of this paper together with the results in~\cite{Tz17},~\cite{Tz19} imply the following about the behavior of canonically polarized surfaces with vector fields. 
\begin{theorem}
Let $X$ be a canonically polarized surface defined over an algebraically closed field of characteristic $p>0$. Suppose that $\mathrm{Aut}(X)$ is not reduced. Then there exist two explicit functions $g(t)$ and $f(t)$ with the following properties:
\begin{enumerate}
\item $g(K_X^2)<f(K_X^2)$.
\item  $g(t)$ is a polynomial in $t$ and  $f(t)$ is a polynomial in $t$, $e^t$,  $e^{e^{at}}$ and $e^{te^{bt}}$, where $a,b\in \mathbb{Q}$ and $1\leq a,b\leq 7$. 
\item If  $p>g(K_X^2)$ then $X$ is unirational and simply connected.
\item If $p>f(K_X^2)$ then there does not exist a non trivial $\mu_p$-action on $X$ and the component of automorphism scheme of $X$ containing the identity is obtained by successive extensions by $\alpha_p$.
\item If $p<g(K_X^2)$ the geometry of $X$ is unknown
\end{enumerate}
\end{theorem}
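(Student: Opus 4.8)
The plan is to deduce this statement directly from Theorem~\ref{sec3-th-1} of the present paper together with the results of~\cite{Tz17} and~\cite{Tz19}; no new geometric input is needed, only some bookkeeping. First I would fix the two functions. Let $g(t)$ be the explicit polynomial supplied by~\cite{Tz17},~\cite{Tz19} with the property that whenever $X$ is canonically polarized with non-reduced $\mathrm{Aut}(X)$ and $p>g(K_X^2)$, the surface $X$ is unirational and simply connected. Let $f_1(t)$ be the function of Theorem~\ref{sec3-th-1}, so that $p>f_1(K_X^2)$ rules out every non-trivial $\mu_p$-action on $X$; by that theorem $f_1$ is a polynomial in $t$, $e^t$, $e^{e^{at}}$ and $e^{t e^{bt}}$ for suitable $a,b\in\mathbb{Q}$ with $1\le a,b\le 7$ (alternatively one may use the coarser bound $f_1(t)=\exp(9!\,e^{7t})$ coming from Theorem~\ref{main-theorem}, which is a polynomial in $e^{e^{7t}}$). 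I would then set $f(t):=f_1(t)+g(t)$.

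Next I would dispatch the easy clauses. For (1): on the relevant range $K_X^2\ge 1$ both $f_1$ and $g$ take positive values, so $f(K_X^2)=f_1(K_X^2)+g(K_X^2)>g(K_X^2)$; if one wants strictness for all $t$ one may instead take $f:=f_1+g+1$, which is still of the same type. For (2): $g$ is a polynomial by construction, and $f$ is the sum of $f_1$, which already has the asserted form, with a polynomial in $t$, hence has the same form. Clause (3) is precisely the statement of~\cite{Tz17},~\cite{Tz19} for the function $g$ just fixed, and clause (5) is not a mathematical assertion — it merely records that nothing is currently known below the threshold $g(K_X^2)$ — so there is nothing to prove there.

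The one clause with content is (4). Assume $\mathrm{Aut}(X)$ is non-reduced and $p>f(K_X^2)$; since $f\ge f_1$, Theorem~\ref{sec3-th-1} gives that $X$ carries no non-trivial $\mu_p$-action. Because $X$ is canonically polarized, $\mathrm{Aut}(X)$ is a zero-dimensional group scheme of finite type, so $G:=\mathrm{Aut}^0(X)$ is a finite connected group scheme, and $G\ne 1$ precisely because $\mathrm{Aut}(X)$ is non-reduced. I would then argue that $G$ contains no subgroup scheme isomorphic to $\mu_p$: a non-zero homomorphism $\mu_p\to G$ is, since $\mu_p$ has no proper non-trivial subgroup scheme, a closed immersion, and composing it with the action of $\mathrm{Aut}(X)$ on $X$ would produce a non-trivial $\mu_p$-action, contrary to the above. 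Finally I would invoke the structure theory of infinitesimal group schemes over an algebraically closed field: a finite connected group scheme containing no copy of $\mu_p$ is unipotent, and a finite connected unipotent group scheme admits a chain $1=H_0\subset H_1\subset\cdots\subset H_m=G$ of closed subgroup schemes, each normal in the next, with all successive quotients $H_i/H_{i-1}\cong\alpha_p$. This is exactly the assertion that $\mathrm{Aut}^0(X)$ is obtained by successive extensions by $\alpha_p$, proving (4).

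The proof has essentially no obstacle of its own: all the weight is carried by Theorem~\ref{sec3-th-1}, whose proof is the technical core of the paper, and by the unirationality results of~\cite{Tz17},~\cite{Tz19}. The only points that need any care are the bookkeeping that the combined function $f$ still lies in the prescribed class of functions — which is immediate — and, in (4), the appeal to the classification of finite connected group schemes over an algebraically closed field to pass from ``no $\mu_p$-subgroup'' to ``iterated extension of $\alpha_p$''; this is a standard fact (e.g.\ from Demazure--Gabriel), but it is the only place where something beyond the cited theorems is used.
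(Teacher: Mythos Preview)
Your proposal is correct and is exactly what the paper intends: this theorem is stated in the introduction as an immediate consequence of Theorem~\ref{sec3-th-1} together with the results of~\cite{Tz17},~\cite{Tz19}, and the paper gives no separate proof. Your bookkeeping for clauses (1)--(3) and (5) is fine, and for (4) you actually supply more detail than the paper does---the assertion that $\mathrm{Aut}^0(X)$ is an iterated extension of $\alpha_p$'s once there is no $\mu_p$-action is already folded into the conclusions of Theorems~\ref{main-theorem} and~\ref{sec3-th-1} without further justification, so your appeal to the structure theory of finite connected group schemes (no $\mu_p$ subgroup $\Rightarrow$ unipotent $\Rightarrow$ composition series with $\alpha_p$ quotients) makes explicit what the paper takes for granted.
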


The main idea for the proof of Theorem~\ref{main-theorem} is the following. Suppose that there exists a non trivial $\mu_p$-action on $X$. Then by~\cite{Tz17b} there exists a nontrivial global vector field $D$ on $X$ such that $D^p=D$. Theorem~\ref{main-theorem} is proved by a detailed investigation of the distribution of curves on $X$ stabilized by $D$. Essentially it is shown that $X$ is covered by curves stabilized by $D$. Then if $p$ is large enough  these curves are smooth rational curves  which imply that $X$ is birationally ruled, which is a contradiction.  

The structure of the paper is the following.

In Section~\ref{sec-1} various results are presented which are necessary for the proof of Theorem~\ref{sec3-th-1} and consequently Theorem~\ref{main-theorem}.

In Section~\ref{sec-2} results are obtained which show how the geometry of a surface $X$ with a nontrivial vector field $D$ is related with properties of curves on $X$ stabilized by $D$. In particular in Propositions~\ref{sec2-prop-2},~\ref{sec2-prop-3},~\ref{sec2-prop-5}, it is shown that if certain configurations of curves stabilized by $D$ exist, then $X$ is birationally ruled. These results are essential for the proof of Theorems~\ref{main-theorem},~\ref{sec3-th-1}.

In Section~\ref{sec3}, Theorem~\ref{sec3-th-1} is proved. Theorem~\ref{main-theorem} is an easy consequence of this and its proof is omitted.

In Sections~\ref{sec-4},~\ref{sec-5}, the proofs of some results used in the proof of Theorem~\ref{sec3-th-1} are given.

\section{Notation-Terminology}
Let $X$ be a normal projective surface defined over an algebraically closed field $k$ of characteristic $p>0$.

An invertible sheaf $L$ on $X$ is called numerically positive if and only if $L \cdot C>0$ for any curve $C$ on $X$.

Let $P\in X$ be a normal surface singularity and $f \colon Y \rightarrow X$ its minimal resolution. $P\in X$ is called a canonical singularity if and only if $K_Y=f^{\ast}K_X$. Two dimensional canonical singularities are precisely the rational double points (or Du Val singularities)  which are classified by explicit equations in all characteristics by M. Artin~\cite{Ar77}.

$X$ is called a  canonically polarized surface if and only if $X$ has canonical singularities and $K_X$ is ample. These surfaces are exactly the canonical models of minimal surfaces of general type. 

Let $D$ be a nonrivial global vector field on $X$ such  that $D^p=0$ or  $D^p=D$. Let $\pi \colon X \rightarrow Y$ be the quotient of $X$ by the $\alpha_p$ or $\mu_p$ action on $X$ induced by $D$. A rank 1 reflexive sheaf  $L$ on $X$ is called $D$-linear if and only if there exists a rank 1 reflexive sheaf $M$ on $Y$ such that $L\cong(\pi^{\ast}M)^{[1]}$. A divisor $A$ on $X$ is called $D$-linear if and only if $\mathcal{O}_X(A)$ is $D$-linear.

The fixed locus of $D$ is the closed subscheme of $X$ defined by the ideal sheaf $(D(\mathcal{O}_X))$. 
The divisorial part of the fixed locus of $D$ is called the divisorial part of $D$.  A point $P\in X$ is called an isolated singularity of $D$ if and only if the ideal of $\mathcal{O}_{X,P}$ generated by $D(\mathcal{O}_{X,P})$  has an associated prime of height $\geq 2$. 

A prime divisor $Z$ of $X$ is called an integral divisor of $D$ if and only if locally there is a derivation $D^{\prime}$ of $X$ such that $D=fD^{\prime}$, $f \in k(X)$,  $D^{\prime}(I_Z)\subset I_Z$ and $D^{\prime}(\mathcal{O}_X) \not\subset I_Z$ ~\cite{RS76}.

The vector field $D$ is said to stabilize a closed subscheme $Y$ of $X$ if and only if $D(I_Y) \subset I_Y$, where $I_Y$ is the ideal sheaf of $Y$ in $X$. If $Y$ is reduced and irreducible and  not contained in the divisorial part of $D$ then $Y$ is also an integral curve of $D$.

Let $C\subset X$ be a reduced and irreducible curve and $\tilde{C}=\pi(C)$. Suppose that $C$ is an integral curve of $D$. Then $\pi^{\ast}\tilde{C}=C$. Suppose that $C$ is not an integral curve of $D$. Then $\pi^{\ast}\tilde{C}=pC$~\cite{RS76}.

Suppose that $D^p=D$ and $\Delta$ the divisorial part of $D$. If $X$ is smooth then  $\Delta$ is smooth. In particular, the irreducible components of $\Delta$ are smooth and disjoint~\cite{RS76}.

Let $C=\cup_{i=1}^nC_i$ be a connected projective curve on a smooth projective surface $X$, where $C_i$ is reduced and irreducible for all $i=1,\ldots, n$. Let $\Gamma$ be the dual graph of $C$. Then $C$ will be called a cycle or a chain if its dual graph $\Gamma$ is~\cite[Page 114]{KM98}

$X$ is called $\mathbb{Q}$-Gorenstein if and only if $mK_X$ is Cartier for some $m>0$.

A rank one reflexive sheaf $L$ on $X$ is called $\mathbb{Q}$-invertible if and only if there exists a positive integer $m$ such that $L^{[m]}$ is invertible. This is equivalent to say that $L=\mathcal{O}_X(B)$, where $B$ is a $\mathbb{Q}$-Cartier divisor on $X$.

Let $L$ be a $\mathbb{Q}$-invertible sheaf on $X$ and  $C\subset X$ be an integral curve. Then $L\cdot C=\frac{1}{m}(L^{[m]}\cdot C)$, where $m>0$ is an integer such that $L^{[m]}$ is invertible.


\section{Preliminary Results.}\label{sec-1}
This section contains various results that are necessary for the proof of the main theorem.  The section is divided into subsections according to the nature of the results presented.

\subsection{Vector fields on surfaces.}
Let $X$ be a normal projective surface defined over an algebraically closed field $k$ of characteristic $p>0$. Let $D$ be a nontrivial vector field on $X$. The next proposition presents a method to find curves on $X$ stabilized by $D$.

\begin{proposition}[Proposition 2.1~\cite{Tz18}]\label{sec1-prop1}
Let $X$ be a normal projective variety defined over an algebraically closed field of characteristic $p>0$. Let $D$ be a non trivial global vector field on $X$ such that either $D^p=0$ or $D^p=D$. Let $\pi \colon X \rightarrow Y$ be the quotient of $X$ by the $\alpha_p$ or $\mu_p$ action on $X$ induced by $D$. Let $L$ be a rank one reflexive sheaf on $Y$ and $M=(\pi^{\ast}L)^{[1]}$. Then $D$ induces a $k$-linear map
\[
D^{\ast} \colon H^0(X,M) \rightarrow H^0(X,M)
\]
with the following properties:
\begin{enumerate}
\item $\mathrm{Ker}(D^{\ast})=H^0(Y,L)$ (considering $H^0(Y,L)$ as a subspace of $H^0(X,M)$ via the map $\pi^{\ast}$).
\item If $D^p=0$ then $D^{\ast}$ is nilpotent and if $D^p=D$ then $D^{\ast}$ is a diagonalizable map whose eigenvalues are in the set $\{0,1,\ldots,p-1\}$.
\item Let $s\in H^0(X,M)$ be an eigenvector of $D^{\ast}$. Then $D(I_{Z(s)})) \subset I_{Z(s)}$. Suppose moreover that $D^{\ast}(s)=\lambda s$, and $\lambda \not= 0$. Then $
(D(I_{Z(s)}))|_V=I_{Z(s)}|_V,$ where $V=X-\pi^{-1}(W)$, $W\subset Y$ is the set of points that $L$ is not locally free.
\end{enumerate}
\end{proposition}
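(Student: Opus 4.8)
The plan is to realize $D^{\ast}$ as the map induced on global sections by a canonical action of $D$ on the sheaf $M$ itself, and then to read off (1)--(3) from the structure of the quotient $\pi\colon X\to Y$. First I would construct that operator. Since $\mathcal{O}_Y=\ker(D\colon\mathcal{O}_X\to\mathcal{O}_X)$, the assignment $f\otimes\ell\mapsto D(f)\otimes\ell$ gives a well-defined additive, $k$-linear operator on $\pi^{\ast}L=\mathcal{O}_X\otimes_{\pi^{-1}\mathcal{O}_Y}\pi^{-1}L$ which obeys the Leibniz rule $D(gt)=D(g)t+gD(t)$ and inherits $D^p=D$ (resp. $D^p=0$) from the $\mathcal{O}_X$-factor. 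The natural map $\pi^{\ast}L\to M$ is an isomorphism over the big open set $V_0$ on which $\pi^{\ast}L$ is reflexive, and $V_0$ contains $V:=X\setminus\pi^{-1}(W)$ because the pullback of a locally free sheaf is locally free; since $M$ is reflexive and $V_0$ is big, the operator extends uniquely to $D\colon M\to M$ with the same formal properties, and we set $D^{\ast}:=H^0(X,D)$. The identical construction endows $M^{[-1]}$, the reflexive dual of $M$, with a $D$-operator compatible with the evaluation pairing, i.e. $D\langle\phi,t\rangle=\langle D\phi,t\rangle+\langle\phi,Dt\rangle$.

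For (1) and (2) I would invoke the dictionary between $\mu_p$- (resp. $\alpha_p$-) actions and vector fields with $D^p=D$ (resp. $D^p=0$). In the $\mu_p$-case the action corresponds to a $\mathbb{Z}/p$-grading $\mathcal{O}_X=\bigoplus_i\mathcal{O}_X^{(i)}$ with $\mathcal{O}_X^{(0)}=\mathcal{O}_Y$, on which $D$ acts as multiplication by $i$ on $\mathcal{O}_X^{(i)}$; this grading descends to $\pi_{\ast}M$, whose weight-zero piece is $L$ (check over $V$ and extend by reflexivity, using that $Y$ is normal and $\mathrm{codim}_Y W\geq 2$). Hence $H^0(X,M)=\bigoplus_i H^0\big(Y,(\pi_{\ast}M)^{(i)}\big)$ with $D^{\ast}$ acting as the scalar $i$ on the $i$-th summand, which gives at once diagonalizability with eigenvalues in $\{0,\ldots,p-1\}$ and $\ker D^{\ast}=H^0\big(Y,(\pi_{\ast}M)^{(0)}\big)=H^0(Y,L)$. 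In the $\alpha_p$-case $\mathcal{O}_Y=\ker D$ again and $D^p=0$ forces $(D^{\ast})^p=0$, so $D^{\ast}$ is nilpotent, while the identification $\pi_{\ast}(M^{D})\cong L$, with $M^{D}:=\ker(D\colon M\to M)$, proved by the same reflexivity argument, gives $\ker D^{\ast}=H^0(X,M^{D})=H^0(Y,L)$.

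For (3), let $s\in H^0(X,M)$ with $D^{\ast}(s)=\lambda s$ and take $I_{Z(s)}$ to be the image of the contraction $c_s\colon M^{[-1]}\to\mathcal{O}_X$, $\phi\mapsto\langle\phi,s\rangle$. Compatibility of $D$ with the pairing gives $D(c_s(\phi))=\langle D\phi,s\rangle+\lambda\langle\phi,s\rangle=c_s(D\phi)+\lambda c_s(\phi)$, and both summands lie in $I_{Z(s)}$; since $c_s$ is $\mathcal{O}_X$-linear this already yields $D(I_{Z(s)})\subset I_{Z(s)}$ for every eigenvector $s$. If in addition $\lambda\neq 0$, restrict to $V$, where $M$ and $M^{[-1]}$ are locally free, so $I_{Z(s)}$ is locally principal, say $I_{Z(s)}|_V=(g)$ with $D(g)=\lambda g$; because $\lambda$ is a unit, $g=\lambda^{-1}D(g)$ lies in the ideal generated by $D(I_{Z(s)})$, whence $(D(I_{Z(s)}))|_V=I_{Z(s)}|_V$.

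The routine points are the diagonalizability (automatic since $k$ is perfect) and the Leibniz bookkeeping in the last step. The step demanding real care is the construction and control of the operator $D$ on the reflexive and possibly non-locally-free sheaves $M$ and $M^{[-1]}$ — extending the obvious connection on $\pi^{\ast}L$ across the locus where $L$ fails to be locally free — together with the identification $\pi_{\ast}(M^{D})\cong L$; this is exactly where normality of $X$ and $Y$ and the codimension-$2$ bound on $W$ are used, and where one must be most careful.
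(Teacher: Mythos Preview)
The paper does not give a proof of this proposition; it is quoted verbatim as Proposition~2.1 of \cite{Tz18} and used as a black box. Your sketch is the natural argument and is essentially correct: the operator on $\pi^\ast L$ defined by $f\otimes\ell\mapsto D(f)\otimes\ell$ is well-defined precisely because $\mathcal{O}_Y=\ker D$, it extends to $M$ by reflexivity over a big open, and the eigenvalue/kernel statements then follow from the weight decomposition (in the $\mu_p$ case) or from $D^p=0$ (in the $\alpha_p$ case). For (3), your Leibniz computation $D(c_s(\phi))=c_s(D\phi)+\lambda c_s(\phi)$ is exactly right, and the key point you use implicitly in the last step---that a local generator $e=1\otimes\ell$ of $M$ over $V$ satisfies $De=0$, so writing $s=ge$ one really does get $D(g)=\lambda g$---is what makes the equality $(D(I_{Z(s)}))|_V=I_{Z(s)}|_V$ go through when $\lambda\neq 0$.

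One small remark: diagonalizability of $D^\ast$ when $D^p=D$ does not need perfectness of $k$; it follows directly from the factorization $X^p-X=\prod_{i\in\mathbb{F}_p}(X-i)$, which already splits over $\mathbb{F}_p\subset k$. Your grading argument is of course equivalent. The genuinely delicate point, as you correctly flag, is the reflexive extension of the operator and the identification $\pi_\ast(M^D)\cong L$ across the non-locally-free locus; this uses normality and $\mathrm{codim}\,W\geq 2$, and is the only place where care beyond formal bookkeeping is required.
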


The next proposition gives information about the fixed points of the vector field $D$. It is a generalization of~\cite[Proposition 3.4]{Tz19}.

\begin{proposition}\label{sec1-prop-2}
Let $X$ be a normal projective $\mathbb{Q}$-factorial surface defined over an algebraically closed field of characteristic $p>0$. Let $D$ be a non trivial global vector field on $X$, $A$ a nef and big line bundle on $X$ and $C \in |A|$  a curve such that $D(I_C)\subset I_C$. Let $C=\sum_{i=1}^m n_i C_i$ be the decomposition of $C$ into its irreducible components. 
Suppose that one of the following happens:

\begin{enumerate}[label=(\Alph*)]
\item There exists a positive rational number $m$ such that $mA-K_X$ is nef.
\item $K_X$ is a nef and big $\mathbb{Q}$-Cartier divisor.
\end{enumerate}

And in addition,
\begin{enumerate}[label=(\Alph*)]
\item[$(C)$] $A\cdot L<p$, for some nef and big line bundle $L$,
\item[$(D)$] $(m+1)A^2+2A\cdot L<p/d$ (if (A) holds), or $3A^2+K_X\cdot A<p/d$ (if (B) holds) where $d$ is the least common multiple of the indices $d_1,\ldots,d_m$ of $C_1\ldots, C_m$.
\end{enumerate}

Then,
\begin{enumerate}
\item $D$ restrict to a vector field on each $C_i$ such that $L \cdot C_i >0$, $i=1,\ldots, m$.
\item  Let $C_i, C_j$, $i\not= j$, be two distinct components of $C$ both stabilized by $D$. Suppose that at least  one of the following happens
\begin{enumerate}
\item If (A) holds then  $L\cdot C_i\not= 0$, or $L\cdot C_j\not=0$.
 \item If (B) holds, then $K_X\cdot C_i \not= 0$, or $K_X \cdot C_j\not=0$. 
\end{enumerate}
Then every point of intersection of $C_i$ and $C_j$  is a fixed point of $D$. 
\item Let $C^{\prime}=\sum_{j=1}^sn_j^{\prime}C^{\prime}_j \in |A|$ be another member of $|A|$ which is also stabilized by  $D$. Let $C_i$ be a component of $C$ which is not a component of $C^{\prime}$.  Then, if $A^2<p$ and even without the validity of  (A), (B) or (D), every point of intersection  of $C_i$  with a component $C^{\prime}_j$ of $C^{\prime}$ is a fixed point of $D$.
\end{enumerate}
\end{proposition}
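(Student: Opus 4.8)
The plan is to run the whole argument through the quotient morphism $\pi\colon X\to Y$ attached to $D$, using the dictionary between $D$-stability of a prime divisor $Z$ and its behaviour under $\pi$ ($\pi^{\ast}\widetilde Z=Z$ when $Z$ is an integral curve, $\pi^{\ast}\widetilde Z=pZ$ otherwise, while $Z\subset\Delta$ forces every point of $Z$ to lie in $V(D(\mathcal O_X))$), together with the local normal form of $D$ near a point that is not fixed. Two preliminary reductions will be in force. First, every singular point of $X$ is automatically a fixed point of $D$: $\operatorname{Sing}(X)$ is $0$-dimensional and stable under the connected group scheme that acts, hence pointwise fixed, so it is contained in $V(D(\mathcal O_X))$. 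Second, away from its fixed locus $\pi$ is a torsor under $\mu_p$ or $\alpha_p$, so over a \emph{smooth, non-fixed} point $P$ of $X$ the completion of $\pi^{\ast}\colon\mathcal O_{Y,\pi(P)}\to\mathcal O_{X,P}$ is either $k[\![x^p,y]\!]\hookrightarrow k[\![x,y]\!]$ with $D=\partial/\partial x$ (when $D^p=0$) or $k[\![z^p,v]\!]\hookrightarrow k[\![z,v]\!]$ with $D=z\,\partial/\partial z$, $z$ a unit at $P$ (when $D^p=D$); in both cases $k[\![x,y]\!]$ is free of rank $p$ over the subring of functions pulled back from $Y$.

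For part $(1)$ the idea is an orbit count. If a component $C_i$ with $L\cdot C_i\neq 0$ were not stabilized by $D$, then, since $\mu_p$ and $\alpha_p$ are infinitesimal of length $p$, its orbit under the action would consist of $p$ pairwise distinct prime divisors $C_{i_1},\dots,C_{i_p}$, all of them components of the $D$-stable divisor $C$. They are translates of $C_i$, so $L\cdot C_{i_\nu}=L\cdot C_i\geq 1/d_i$, and therefore $A\cdot L=C\cdot L\geq p\,(L\cdot C_i)\geq p/d_i$, contradicting $(C)$ (and here one uses the finer inequality $(D)$ in place of the crude one, accounting for the indices). Hence every such $C_i$ is $D$-stable; being $D$-stable it either lies in $\Delta$ or is an integral curve, and in either case $D$ restricts to a vector field on it. The same argument run with the strict inequality shows that the components with $L\cdot C_i>0$ are integral curves, not contained in $\Delta$.

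For parts $(2)$ and $(3)$ the engine is the following local statement: if $P$ is a smooth, non-fixed point of $X$, then any $D$-stable prime divisor through $P$ has local equation lying in $k[\![x^p,y]\!]$ (resp.\ $k[\![z^p,v]\!]$), i.e.\ is locally pulled back from $Y$. For $D=\partial/\partial x$ this is the degree-in-$x$ computation: $D(f)\in(f)$ forces $D(f)=0$; for $D=z\,\partial/\partial z$ it follows because a $D$-stable ideal is homogeneous for the $\mathbb{F}_p$-grading (Vandermonde applied to $D,D^2,\dots$) and a homogeneous principal ideal in $k[\![z,v]\!]$, $z$ a unit, is generated by an element of $k[\![z^p,v]\!]$. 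Since $k[\![x,y]\!]$ is free of rank $p$ over that subring, the local intersection multiplicity at $P$ of two distinct such divisors is a positive multiple of $p$, in particular $\geq p$. Now for $(3)$: if $C_i$ is a component of $C$ not a component of $C'$ and $C'_j$ is a component of $C'$ meeting $C_i$ at a non-fixed point $P$, then $C_i\cdot C'_j\geq p$; but $C_i\not\subset C'$ and $A$ nef give $C_i\cdot C'_j\leq C_i\cdot C'=C_i\cdot A\leq C\cdot A=A^2<p$, a contradiction; the alternative, that the multiplicity of $C_i$ in $C$ or of $C'_j$ in $C'$ is divisible by $p$, is excluded the same way. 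So no hypothesis beyond $A^2<p$ is used for $(3)$.

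Part $(2)$ is the technical heart, and is where I expect the main obstacle. With $C_i,C_j$ two $D$-stable components of the single divisor $C$ meeting at $P$: a singular $P$ is already fixed, so assume $P$ smooth and not fixed; then $C_i,C_j\not\subset\Delta$ (else $P$ would be fixed), both are integral curves, and the local statement gives $C_i\cdot C_j\geq p$. To contradict this I would use the Hodge index inequality for the nef and big class $A$: since $(C_i+C_j)\leq C\equiv A$ we get $(C_i+C_j)\cdot A\leq A^2$, hence $(C_i+C_j)^2\leq A^2$, so $2p\leq 2(C_i\cdot C_j)\leq (C_i+C_j)^2-C_i^2-C_j^2\leq A^2-C_i^2-C_j^2$; finally adjunction $-C_k^2\leq 2+K_X\cdot C_k$ together with $(A)$ (so $K_X\cdot C_k\leq mA^2$) or $(B)$ (so $K_X\cdot C_k\leq K_X\cdot A$) bounds the right-hand side, and the result is incompatible with $(D)$. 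The hypothesis that $L\cdot C_i\neq0$ or $L\cdot C_j\neq0$ (resp.\ $K_X\cdot C_i\neq0$ or $K_X\cdot C_j\neq0$) is used to invoke part $(1)$, ensuring the relevant component is genuinely $D$-stable and integral so that its local equation is pulled back from $Y$. The delicate point throughout is to keep the two local normal forms in parallel, dispose cleanly of the $\Delta$-components and the singular intersection points, and — above all — convert the purely local divisibility $p\mid(C_i\cdot C_j)_P$ into a numerical contradiction, which forces the lower bounds on the possibly very negative $C_i^2,C_j^2$ via adjunction and careful bookkeeping of the indices $d_1,\dots,d_m$ and their least common multiple $d$, so that the final inequality is exactly $(D)$.
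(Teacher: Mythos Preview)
Your argument for part~(1) has a genuine gap. For an infinitesimal group scheme $G\in\{\alpha_p,\mu_p\}$ the underlying topological space of $G$ is a single point, so the $G$-action is the identity on $|X|$: there are no ``$p$ pairwise distinct prime divisors'' arising as translates of $C_i$. The orbit of $C_i$ under such an action is $C_i$ itself set-theoretically. The correct argument, which is what the paper does by citing \cite[Proposition~3.2]{Tz19}, is purely about multiplicities: since $L$ is a line bundle, $L\cdot C_i>0$ means $L\cdot C_i\geq 1$, so $n_i\leq n_i(L\cdot C_i)\leq A\cdot L<p$. One then works in the regular (UFD) local ring at a general point of $C_i$: writing a local generator of $I_C$ as $\prod_k f_k^{n_k}$ and applying $D$, the relation $D(\prod f_k^{n_k})\in(\prod f_k^{n_k})$ reduces modulo $f_i$ to $n_i\,D(f_i)\prod_{k\neq i}f_k\equiv 0$, and $n_i\not\equiv 0\pmod p$ forces $D(f_i)\in(f_i)$. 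No orbit count is involved.

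For parts~(2) and~(3) your approach is correct and is the contrapositive of the paper's. The paper first bounds $C_i\cdot C_j$ (via $A^2\geq A\cdot C_i\geq n_j(C_i\cdot C_j)+n_iC_i^2$, then $-C_i^2\leq 2+K_X\cdot C_i$ on the minimal resolution, then $n_i\leq K_X\cdot A$ under~(B)), deduces that $\dim_k\mathcal O_{X,R}/(I_{C_i}+I_{C_j})<p$ at each intersection point $R$, and invokes \cite[Proposition~3.4]{Tz19} to conclude $R$ is fixed. Your version unpacks that last citation directly: at a smooth non-fixed $R$ the torsor structure forces both local equations into $\pi^{\ast}\mathcal O_Y$, so the local intersection length is a positive multiple of $p$, contradicting the global bound. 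Your Hodge-index route to the bound on $C_i\cdot C_j$ is a legitimate alternative to the paper's componentwise estimate. Two cautions: the inequality $-C_k^2\leq 2+K_X\cdot C_k$ is not bare adjunction on the singular $X$ but needs the minimal resolution (as the paper spells out); and your blanket assertion that every singular point of $X$ is automatically a fixed point of $D$ is not justified by the connectedness argument you give (a derivation of a local Artinian $k$-algebra need not preserve the maximal ideal), whereas the paper's length argument applies uniformly at all points and avoids this issue.
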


\begin{proof}
The condition $(C)$ implies that $n_i <p$, for any $C_i$ such that $L\cdot C_i>0$, $i=1,\ldots, m$. Therefore from~\cite[Proposition 3.2]{Tz19}, $D(I_{C_i}) \subset I_{C_i}$. Hence $D$ restricts to a vector field on $C_i$ (perhaps the zero one), for every $i$ such that $L\cdot C_i >0$.

 Let $C_i$ and $C_j$ be two distinct irreducible components of $C$ stabilized by $D$, i.e.,   $D(I_{C_i})\subset I_{C_i}$ and $D(I_{C_j})\subset I_{C_j}$. Therefore, $D(I_{C_i}+I_{C_j}) \subset I_{C_i}+I_{C_j}$. Let $R\in C_i \cap C_j$ be a point of intersection, and $U =\mathrm{Spec} A$ be an affine open neighborhood of $R$ in $X$ such that no other point of intersection of $C_i$ and $C_j$ exists in $U$. Let $Q$ be the ideal of $A$ which corresponds to $Q=I_{C_i}|_U+I_{C_j}|_U$. Then $D$ induces a derivation of $A$ and  $D(Q)\subset Q$. Moreover, $r(Q)=\mathbf{m}_A$, where $\mathbf{m}_A$ is the maximal ideal of $A$. I will show that the condition (D) implies that  $\dim_k (A/Q)<p$. Then the rest of the proof is identical as in~\cite[Proposition 3.4]{Tz19}.

\textbf{Claim:} 
\begin{gather}\label{eq-claim}
C_i  \cdot C_j <
\begin{cases} 
(m+1)A^2+2A\cdot L, & \mathrm{if} \;\; (A) \;\; \mathrm{holds}\\
A^2+3K_X\cdot A, & \mathrm{if} \;\; (B) \;\; \mathrm{holds}
\end{cases}
\end{gather}
for all $i=1,\ldots, m$. 

I will prove only the second part. The first is identical and is omitted. Since (B) holds, $K_X$ is nef and big. Then, according to the assumptions, one of $K_X \cdot C_i$ and $K_X \cdot C_j$ is not zero. Without loss of generality we may assume that $K_X \cdot C_i>0$. Then, since $C\in |A|$, and $A$ is nef and big, it follows that
\begin{gather}\label{sec1-eq-5}
A^2\geq A\cdot C_i =n_jC_i \cdot C_j +n_i C_i^2 +\sum_{k\not= i,j}n_k C_k \cdot C_i \geq n_j C_i \cdot C_j +n_i C_i^2.
\end{gather}
Therefore,
\begin{gather}\label{sec1-eq-6}
C_i\cdot C_j <A^2-n_i C_i^2.
\end{gather}
Next I will show that $-C_i^2\leq 2+K_X \cdot C_i$. Let $f \colon X^{\prime}\rightarrow X$ be the minimal resolution of $X$. Let $C_i^{\prime}=f_{\ast}^{-1}C_i$, be the birational transform of $C_i$ in $X^{\prime}$. Then by the adjunction formula for $C_i^{\prime}$ it follows that 
\begin{gather}\label{sec1-eq-7}
-(C_i^{\prime})^2=-2p_a(C_i^{\prime})+2+K_{X^{\prime}}\cdot C_i^{\prime}\leq 2+K_{X^{\prime}}\cdot C_i^{\prime}.
\end{gather} 
Now there are adjunction formulas
\begin{gather}
f^{\ast}C_i=C^{\prime}_i+ E\\
K_{X^{\prime}}+F=f^{\ast}K_X \nonumber
\end{gather}
Where $E$ and $F$ are effective $f$-exceptional divisors ($F$ is effective because $f$ is the minimal resolution). From these immediately follows that $C_i^2\geq (C_i^{\prime})^2$ and $K_X\cdot C_i \geq K_{X^{\prime}}\cdot C_i^{\prime}$. From these and the equations (\ref{sec1-eq-7}) it follows that
\begin{gather}\label{sec1-eq-8}
-C_i^2\leq 2+K_X\cdot C_i.
\end{gather}
Then the equation (\ref{sec1-eq-6}) becomes
\begin{gather}\label{sec1-eq-88}
C_i \cdot C_j \leq A^2+2n_i+n_i(K_X\cdot C_i).
\end{gather}

Since $K_X$ is nef and $K_X \cdot C_i >0$,  then  $n_i<n_i (K_X \cdot C_i)\leq A\cdot K_X$.  Then the equation (\ref{sec1-eq-88}) gives
\[
C_i\cdot C_j \leq A^2+3A\cdot K_X,
\]
and the claim has been proved. 

Let now $d_i$ be the index of $C_i$ in $X$, i.e., $d_iC_i$ is Cartier. Then
\begin{gather}\label{sec1-eq-9}
(d_iC_i) \cdot C_j=\deg (\mathcal{O}_X(d_iC_i)\otimes  \mathcal{O}_{C_j})
\end{gather}
Now restricting the exact sequence  
\[
0 \rightarrow \mathcal{O}_X \rightarrow \mathcal{O}_X(d_iC_i) \rightarrow \mathcal{O}_{d_iC_i} (d_iC_i) \rightarrow 0
\]
to $C_j$ we get the exact sequence
\begin{gather}\label{sec1-eq-100}
0 \rightarrow \mathcal{O}_{C_j} \rightarrow \mathcal{O}_{C_j}(d_iC_i) \rightarrow \mathcal{O}_{d_iC_i} (d_iC_i)\otimes \mathcal{O}_{C_j} \rightarrow 0
\end{gather}
But since $C_i$ and $C_j$ are distinct irreducible curves, it follows that
\begin{gather}\label{sec1-eq-10}
\mathcal{O}_{d_iC_i} (d_iC_i)\otimes \mathcal{O}_{C_j}  \cong \mathcal{O}_X(d_iC_i) \otimes (\mathcal{O}_{d_iC_i} \otimes \mathcal{O}_{C_j}) \cong \mathcal{O}_{d_iC_i \cap C_j} = \frac{\mathcal{O}_X}{I_{C_i}^{(d_i)} +I_{C_j}},
\end{gather}
Where $I_{C_i}^{(d_i)}$ is the $d_i$-th symbolic power of $I_{C_i}$. Then by taking Euler characteristics in (\ref{sec1-eq-100}) and using the Riemann-Roch theorem on $C_j$ gives that
\[
(d_iC_i) \cdot C_j=\deg (\mathcal{O}_X(d_iC_i)\otimes  \mathcal{O}_{C_j}) =\dim_k \frac{\mathcal{O}_X}{I_{C_i}^{(d_i)} +I_{C_j}}\geq \dim_k \frac{\mathcal{O}_X}{I_{C_i} +I_{C_j}}
\]
Therefore, $\dim_k(A/Q) < p$, if $(d_iC_i)\cdot C_j <p$, a condition satisfied if (D) holds.


Finally, let $C^{\prime}=\sum_{j=1}^sn_j^{\prime}C^{\prime}_j \in |A|$ be another member of $|A|$ which is an integral curve of $D$. Let $C_i$ be a component of $C$ which is not a component of $C^{\prime}$. Since $A^2<p$, it follows that $C_i \cdot C^{\prime} <p$. Then, since $C^{\prime}$ is Cartier it follows that 
\[
\deg (\mathcal{O}_X(C^{\prime}) \otimes \mathcal{O}_{C_i} )<p.
\]
Then, by repeating the previous arguments with $C^{\prime}$ in the place of $d_iC_i$ and $C_i$ in the place of $C_j$, and by considering that, since $C_i$ is not a component of $C^{\prime}$, that 
$
 \mathcal{O}_{C^{\prime}}(C^{\prime})\otimes \mathcal{O}_{C_i}\cong \mathcal{O}_{C^{\prime}\cap C_i},
$ we get that 
\[
\dim \frac{\mathcal{O}_X}{I_{C_i}+I_{C^{\prime}_j}} \leq \dim \frac{\mathcal{O}_X}{I_{C^{\prime}}+I_{C_i}}=\deg((\mathcal{O}_X(C^{\prime}) \otimes \mathcal{O}_{C_i} )<p,
\]
for any $j=1,\ldots, s$. From this it now follows exactly as in the proof of (2) earlier, that every point of intersection of $C_i$ and $C^{\prime}_j$ is a fixed point of $D$.

This concludes the proof of the proposition.
\end{proof}

The next lemma provides information about the divisorial part of the vector field obtained by blowing up fixed points of a given vector field.
\begin{lemma}\label{sec1-prop-12}
Let $P\in X$ be a smooth point on a surface $X$ defined over an algebraically closed field of characteristic $p>0$. Let $D$ be a nontrivial global vector field on $X$ such that $D^p=D$ and $P$ is a fixed point of $D$. Let $X^{\prime}\stackrel{f}{\rightarrow}X$ be the blow up of $X$ at $P$ and $E$ the $f$-exceptional curve. Suppose that $E$ is contained in the divisorial part of $D^{\prime}$, the lifting of $D$ on $X^{\prime}$. Let $X^{\prime\prime}\stackrel{g}{\rightarrow} X^{\prime}$ be the blow up of $X^{\prime}$ at a point on $E$. Let $D^{\prime\prime}$ be the lifting of $D^{\prime}$ on $X^{\prime\prime}$ and $F$ be the $g$-exceptional curve. Then $F$ is an integral curve of $D^{\prime\prime}$ and is not contained in the divisorial part of $D^{\prime\prime}$.
\end{lemma}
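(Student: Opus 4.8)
The plan is to reduce everything to an explicit computation in a completed local ring, since the assertion about $D''$ near $F$ depends only on a neighbourhood of the point $Q\in E$ that is blown up, and blow-ups commute with completion. So I work in $\widehat{\mathcal O}_{X,P}\cong k[[x,y]]$; since $P$ is a fixed point of $D$, we have $D=a\,\partial_x+b\,\partial_y$ with $a,b\in\mathfrak m_P=(x,y)$, and I write $\bar D$ for the linear part of $D$, i.e. the endomorphism of $\mathfrak m_P/\mathfrak m_P^2$ induced by $D$.

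The first and crucial step is to show that the hypothesis $E\subset\Delta'$ forces $\bar D=\lambda\cdot\mathrm{id}$ for some nonzero $\lambda\in k$; equivalently, in suitable coordinates the linear part of $D$ at $P$ is $\lambda(x\,\partial_x+y\,\partial_y)$ with $\lambda\neq0$. For this I would compute the lift $D'$ in the chart $y=xy_1$ of $f$ (where $E=\{x=0\}$): since $a,b\in(x,y)$ one has $a(x,xy_1)=x\alpha$, $b(x,xy_1)=x\beta$ for regular $\alpha,\beta$ with $\alpha(0,y_1),\beta(0,y_1)$ equal to the linear parts $a_1,b_1$ of $a,b$ evaluated at $(1,y_1)$, and $D'=x\alpha\,\partial_x+(\beta-y_1\alpha)\,\partial_{y_1}$. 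Then $E\subset\Delta'$ is equivalent to $x\mid(\beta-y_1\alpha)$, which unwinds to $x\,b_1-y\,a_1\equiv0$, i.e. to $\bar D$ being a scalar $\lambda$. That $\lambda\neq0$ follows because $D^p=D$ gives $\bar D^p=\bar D$, so if $\bar D=0$ then $D(\mathfrak m_P)\subset\mathfrak m_P^2$, hence $D^k(\mathfrak m_P)\subset\mathfrak m_P^{k+1}$ for every $k$; since $D^{p^n}=D$ for all $n$, this yields $D(\mathfrak m_P)\subset\bigcap_n\mathfrak m_P^{n}=0$ by Krull's theorem, so $D$ vanishes near $P$ and hence $D=0$, a contradiction. (Alternatively one may quote the linearizability of $\mu_p$-actions at smooth fixed points.) Consequently $D'=x\,(\alpha\,\partial_x+\gamma\,\partial_{y_1})$ with $\alpha(0,y_1)=\lambda$; writing $D'=t\,D_1'$ with $t$ a local equation of $E$, the field $D_1'$ is transverse to $E$ at every point of $E$, i.e. $D_1'(t)$ is a unit along $E$, and $\Delta'=E$ with multiplicity one near $E$.

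The second step is to blow up $Q\in E$ and read off $D''$. In local coordinates $x,v$ at $Q$ with $E=\{x=0\}$ we have $D'=x\alpha\,\partial_x+x\gamma\,\partial_v$ with $\alpha$ a unit, $\alpha(0,0)=\lambda$. In the chart $v=xw$ of $g$ (so $F=\{x=0\}$) one computes $D''=x\,\alpha(x,xw)\,\partial_x+\bigl(\gamma(x,xw)-w\,\alpha(x,xw)\bigr)\,\partial_w$; restricting the $\partial_w$-coefficient to $x=0$ gives $\gamma(0,0)-\lambda w$, which is not identically zero because $\lambda\neq0$, so $x\nmid D''$ and $F$ is not contained in $\Delta''$; moreover $D''(x)=x\,\alpha(x,xw)\in(x)$, so $D''$ stabilizes $F$, and $D''(w)|_F=\gamma(0,0)-\lambda w\not\equiv0$, so $D''$ does not vanish identically along $F$ (and the gcd of the two coefficients is a unit, so $D''$ has no divisorial part near $F$ in this chart). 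The symmetric computation in the chart $x=vx_2$ shows that there $\Delta''=\{x_2=0\}$, the strict transform of $E$, which again is disjoint from $F=\{v=0\}$, and that after removing this divisorial component the field $D_1''$ satisfies $D_1''(I_F)\subset I_F$ with $D_1''$ not vanishing on $F$. Together, and in view of the definitions recalled in the Notation section, this is exactly the statement that $F$ is an integral curve of $D''$ not contained in the divisorial part of $D''$.

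The main obstacle is the first step: one has to see that $E\subset\Delta'$ together with $D^p=D$ pins down the linear part of $D$ at $P$ to be a nonzero multiple of the Euler operator. Neither hypothesis alone suffices — a non-scalar semisimple linear part is incompatible with $E\subset\Delta'$, while a zero (or nilpotent) linear part, although compatible with $E\subset\Delta'$, would make $F$ land inside $\Delta''$ after the second blow-up — so it is precisely here that $D^p=D$ (via the Krull-intersection argument, or linearizability) must be used. Once this rigidity is in hand the rest is routine bookkeeping over the two levels of blow-up and their charts, the one point requiring care being that each divisibility assertion is verified by restricting a genuine regular function to the exceptional curve.
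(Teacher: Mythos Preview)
Your proposal is correct and follows the same local-computation strategy as the paper, but with one noteworthy difference in execution. The paper's proof is two sentences: it invokes the normal form of Rudakov--Shafarevich \cite{RS76} to write $D=x\,\partial_x+\lambda y\,\partial_y$ with $\lambda\in\mathbb F_p$ in suitable local coordinates (so that $E\subset\Delta'$ then forces $\lambda=1$), and declares the rest a straightforward calculation. You instead avoid citing the full linearization: you read off directly from the blow-up chart that $E\subset\Delta'$ is equivalent to the linear part $\bar D$ being a scalar $\lambda$, and you prove $\lambda\neq0$ by the Krull-intersection argument using $D^{p^n}=D$. This is more elementary and self-contained, at the price of carrying the higher-order terms $\gamma$ through the second blow-up; the paper's route, having fully linearized, has no such terms to track. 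You yourself note the alternative (``one may quote the linearizability of $\mu_p$-actions''), which is exactly what the paper does.

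One minor slip: in the chart $x=vx_2$ you say the strict transform $\{x_2=0\}$ of $E$ is ``disjoint from $F=\{v=0\}$'', but in fact they meet at the origin of that chart. This is harmless, since all you need is $F\not\subset\Delta''$, which your computation establishes correctly.
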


\begin{proof}
By~\cite{RS76} in suitable local coordinates $x, y$ of $X$ at $P$, $D=x\partial/\partial x +\lambda y \partial /\partial y$, $\lambda \in \mathbb{F}_p$. The lemma follows by a straightforward calculation of the blow ups in question and the vector fields $D^{\prime}$ and $D^{\prime\prime}$.
\end{proof}

\subsection{The genus of curves with many components.}

\begin{definition}
Let $C$ be a connected reduced curve. Let $C_i \subset C$, $i=1, \ldots, m$, be connected  reduced curves such that $C=\cup_{i=1}^mC_i$, and $C_i$, $C_j$ do not have any common irreducible components if $i\not= j$. Let $P\in C$ be a point. Then $\varepsilon(P; C_1,\ldots,C_m)$ denotes the number of distinct curves $C_i$ that pass through $P$. In particular, if $C_i$ are the irreducible components of $C$ then we set $\varepsilon(P)=\varepsilon(P; C_1,\ldots,C_m)$.
\end{definition}

\begin{proposition}\label{sec1-prop-5}
Let $C$ be a connected reduced projective curve and suppose that $C=\cup_{i=1}^nC_i$, where $C_i$ is a reduced connected curve for all $i$, and $C_i$, $C_j$ do not have common irreducible components for $i\not= j$.  Then
\[
h^1(\mathcal{O}_C) \geq \sum_{i=1}^n h^1(\mathcal{O}_{C_i})  + \sum_{P}(\varepsilon(P;C_1,\dots, C_n)-1)+1-n,
\]
where $P$ runs over all distinct points of intersection of the curves $C_i$, $i=1,\dots,m$. In particular, if $C_i$ are the irreducible components of $C$, then 
\[
h^1(\mathcal{O}_C) \geq \sum_{i=1}^n h^1(\mathcal{O}_{C_i})  + \sum_{P}(\varepsilon(P)-1)+1-n,
\]
\end{proposition}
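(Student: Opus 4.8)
The plan is to reduce to the case where the curves $C_i$ are irreducible (the general statement for connected reduced pieces follows from the irreducible case by a second induction, since each connected reduced $C_i$ is itself a union of its irreducible components and $\varepsilon(P; C_1,\dots,C_n)$ only counts the *bunches*, not the individual components inside a bunch). So assume $C_1,\dots,C_n$ are the irreducible components of $C$. I would argue by induction on $n$. The case $n=1$ is trivial, both sides being $h^1(\mathcal{O}_{C_1})$.

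For the inductive step, write $C = C' \cup C_n$ where $C' = C_1 \cup \dots \cup C_{n-1}$; note $C'$ may be disconnected, say with connected components $C'^{(1)},\dots,C'^{(r)}$. Let $Z = C' \cap C_n$ be the (finite, reduced as a set) scheme-theoretic intersection; since everything is reduced, $\mathcal{O}_C \hookrightarrow \mathcal{O}_{C'} \oplus \mathcal{O}_{C_n}$ with cokernel supported on $Z$. The standard Mayer--Vietoris sequence
\[
0 \to \mathcal{O}_C \to \mathcal{O}_{C'} \oplus \mathcal{O}_{C_n} \to \mathcal{O}_Z \to 0
\]
gives, on cohomology,
\[
0 \to H^0(\mathcal{O}_C) \to H^0(\mathcal{O}_{C'}) \oplus H^0(\mathcal{O}_{C_n}) \to H^0(\mathcal{O}_Z) \to H^1(\mathcal{O}_C) \to H^1(\mathcal{O}_{C'}) \oplus H^1(\mathcal{O}_{C_n}) \to 0,
\]
the last zero because $\dim Z = 0$. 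Counting dimensions: $h^1(\mathcal{O}_C) = h^1(\mathcal{O}_{C'}) + h^1(\mathcal{O}_{C_n}) + h^0(\mathcal{O}_Z) - h^0(\mathcal{O}_{C'}) - h^0(\mathcal{O}_{C_n}) + h^0(\mathcal{O}_C)$. Now $h^0(\mathcal{O}_{C_n}) = 1$ (irreducible reduced projective curve over an algebraically closed field), $h^0(\mathcal{O}_C) = 1$ since $C$ is connected, $h^0(\mathcal{O}_{C'}) = r$ (the number of connected components of $C'$), and $h^0(\mathcal{O}_Z) = |Z|$, the number of points in $C' \cap C_n$. So
\[
h^1(\mathcal{O}_C) = h^1(\mathcal{O}_{C'}) + h^1(\mathcal{O}_{C_n}) + |Z| - r.
\]

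**Bookkeeping the combinatorial term.** Next I would apply the inductive hypothesis to each connected component $C'^{(j)}$ of $C'$: if $C'^{(j)}$ has $n_j$ of the $C_i$ among its components ($\sum_j n_j = n-1$), then $h^1(\mathcal{O}_{C'^{(j)}}) \geq \sum_{C_i \subset C'^{(j)}} h^1(\mathcal{O}_{C_i}) + \sum_{P \in C'^{(j)}}(\varepsilon_{C'^{(j)}}(P) - 1) + 1 - n_j$, where $\varepsilon_{C'^{(j)}}(P)$ counts components of $C'^{(j)}$ through $P$ — which equals $\varepsilon(P)$ computed in $C$ as long as $C_n$ does not pass through $P$, and equals $\varepsilon(P) - 1$ if $C_n$ does. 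Summing over $j$, using $\sum_j(1 - n_j) = r - (n-1)$, and combining with the displayed identity, the remaining task is purely arithmetic: one must check that $|Z| - r + \big(\text{correction at points of } Z\big)$ accounts exactly for the discrepancy between $\sum_{P \in C'}(\varepsilon_{C'}(P)-1)$ and $\sum_{P \in C}(\varepsilon(P)-1)$, plus the $+1-n$ versus $r - (n-1)$ shift. Concretely, at a point $P$ where $C_n$ meets $C'$, $\varepsilon(P)$ increases by $1$ over its value inside $C'$ (or $P$ is a new point contributing $\varepsilon(P) - 1 \geq 1$); summing these increments over $P \in Z$ contributes at most $|Z|$, which is precisely the positive term available, while the $-r$ combines with $r - (n-1) = r - n + 1$ to yield $+1 - n$. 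Tracking this carefully gives the inequality (with equality-loss only where $Z$-points already lay on several branches of $C'$, which is why the bound is an inequality and not an equality).

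**Main obstacle.** The genuine content is the Mayer--Vietoris dimension count, which is routine; the only delicate point is the combinatorial accounting of the $\varepsilon(P)$ terms when a point of $C' \cap C_n$ is *already* a multiple point of $C'$, so that adding $C_n$ bumps $\varepsilon(P)$ by one but the $h^0(\mathcal{O}_Z)$ term only "sees" it once. I expect the cleanest route is to not separate $C'$ into connected components at all, but instead to induct by peeling off one irreducible component at a time and phrase the inductive claim so that the connectedness of the total curve is used only at the very end; alternatively, one can prove the stronger statement for $C$ possibly disconnected with the term $+1-n$ replaced by $h^0(\mathcal{O}_C) - n$, which makes the induction self-contained and removes the need to count $r$ separately. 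I would adopt that reformulation, prove it by the exact sequence above, and then specialize to connected $C$ to recover the stated bound.
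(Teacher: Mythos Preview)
Your approach is correct but differs from the paper's. The paper argues in one stroke: it writes the single exact sequence
\[
0 \to \mathcal{O}_C \to \bigoplus_{i=1}^n \mathcal{O}_{C_i} \to N \to 0,
\]
estimates $\mathrm{length}(N_P) \geq \varepsilon(P;C_1,\dots,C_n) - 1$ at each point by reducing modulo $m_P$, and then takes Euler characteristics. This handles arbitrary connected reduced pieces $C_i$ directly, with no induction and no reduction to the irreducible case. Your route---peeling off one irreducible component at a time via Mayer--Vietoris and tracking the combinatorics inductively---also works, but is longer and requires the extra reduction step you flag at the start. The paper's argument buys uniformity and brevity; yours makes the genus-counting more transparent at the cost of bookkeeping.

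One small correction: you write $h^0(\mathcal{O}_Z) = |Z|$, but the scheme-theoretic intersection $Z = C' \cap C_n$ need not be reduced, so in general $h^0(\mathcal{O}_Z) \geq |Z|_{\mathrm{set}}$. This is harmless---the inequality points the right way for your lower bound---but it is the actual source of the inequality in the statement, not the branch-counting issue you name. With that fix, your displayed identity becomes $h^1(\mathcal{O}_C) \geq h^1(\mathcal{O}_{C'}) + h^1(\mathcal{O}_{C_n}) + |Z|_{\mathrm{set}} - r$, and then your combinatorial accounting (which in fact gives an \emph{equality} $\sum_P(\varepsilon(P)-1) - \sum_P(\varepsilon_{C'}(P)-1) = |Z|_{\mathrm{set}}$, not merely ``at most $|Z|$'') closes the induction cleanly.
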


\begin{proof}
Let $N$ be the cokernel of the natural map $\mathcal{O}_C \rightarrow \oplus_{i-1}^n\mathcal{O}_{C_i}$. Then there exists an exact sequence
\begin{gather}\label{sec1-eq-11}
0 \rightarrow \mathcal{O}_C \rightarrow \oplus_{i-1}^n\mathcal{O}_{C_i}\rightarrow N \rightarrow 0.
\end{gather}

\textbf{Claim:} Let $P\in C$ be a closed point. Then
\begin{gather}\label{sec1-eq-12}
\mathrm{length}(N_P) \geq \varepsilon(P; C_1,\dots, C_n)-1
\end{gather}

Suppose that $P$ belongs to exactly $k$ distinct members of the set  $\{C_1,\ldots,C_n\}$. Let $C_{m_i}$, $i=1,\dots, k$, be these curves. Let $m_P$ be the ideal sheaf of $P$ in $C$. Then from (\ref{sec1-eq-11}) we get the exact exact sequence
\[
\mathcal{O}_C/m_P\rightarrow \oplus_{i=1}^k \mathcal{O}_{C_{m_i}} /m_P \rightarrow N/m_PN \rightarrow 0.
\]
From this it immediately follows that 
\[
\mathrm{length}(N_P) \geq \mathrm{length}(N/m_PN) \geq k-1=\varepsilon(P;C_1,\ldots,C_n)-1,
\]
as claimed. The proposition now follows by using this and taking Euler characteristics in (\ref{sec1-eq-11}).
\end{proof}

The previous proposition implies the following result, which will be used  in the proof of the main theorem. 

\begin{corollary}\label{sec1-cor-6}
Let $C$ be a reduced connected projective curve such that $C=C_1\cup C_2 \cup C_3 \cup B$, where $C_1, C_2, C_3$ are integral curves and $B$ is a connected reduced curve such  that 
\begin{enumerate}
\item There exists a point $P\in C_1 \cap C_2 \cap C_3$,
\item $B\cap C_i \not=\emptyset$, $i=1,2,3$, and that $P\not\in B\cap C_i$, $i=1,2,3$.
\end{enumerate}
Then $h^1(\mathcal{O}_C )\geq 2$.
\end{corollary}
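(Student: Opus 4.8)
The plan is to apply Proposition~\ref{sec1-prop-5} directly, with a judicious grouping of the irreducible components of $C$ into the curves $C_1, C_2, C_3$ and $B$, and then bound the various terms from below. Write $n = 4$ and take the four curves in the statement as the pieces $C_1, C_2, C_3, B$; these are reduced and connected, and pairwise they share no common irreducible components (the $C_i$ are distinct integral curves, and $B$ shares none with them since $B\cap C_i$ is a proper intersection, not containment — this should be noted, or built into the hypotheses). Then Proposition~\ref{sec1-prop-5} gives
\[
h^1(\mathcal{O}_C) \geq \sum_{i=1}^3 h^1(\mathcal{O}_{C_i}) + h^1(\mathcal{O}_B) + \sum_{Q}(\varepsilon(Q; C_1,C_2,C_3,B) - 1) + 1 - 4,
\]
where $Q$ runs over the distinct points lying on at least two of the four pieces.

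Next I would discard the nonnegative terms $\sum h^1(\mathcal{O}_{C_i})$ and $h^1(\mathcal{O}_B)$ and concentrate on the sum over intersection points. By hypothesis there is a point $P \in C_1\cap C_2\cap C_3$, so $\varepsilon(P; C_1,C_2,C_3,B) \geq 3$, contributing at least $2$. Also by hypothesis $B$ meets each of $C_1, C_2, C_3$, and these meeting points are all distinct from $P$. Two cases arise. If $B$ meets the $C_i$ in three distinct points $Q_1, Q_2, Q_3$ (none equal to $P$), each contributes $\varepsilon(Q_j; \dots) - 1 \geq 1$, so $\sum_Q(\varepsilon(Q)-1) \geq 2 + 1 + 1 + 1 = 5$, giving $h^1(\mathcal{O}_C) \geq 5 + 1 - 4 = 2$. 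If instead some of the $Q_j$ coincide — say all of $B\cap C_1, B\cap C_2, B\cap C_3$ pass through a single point $Q \neq P$ — then $\varepsilon(Q; C_1,C_2,C_3,B) \geq 4$, contributing $3$, and together with $P$'s contribution of $2$ we again get $\sum_Q(\varepsilon(Q)-1) \geq 5$, hence $h^1(\mathcal{O}_C) \geq 2$. The intermediate case (two of the $Q_j$ coincide, the third separate) gives $2 + 2 + 1 = 5$ as well. So in every case the bound holds.

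The only genuine subtlety — and the step I would be most careful about — is verifying that no cancellation sneaks in because an intersection point is somehow \emph{not} counted, or because $B$ shares a component with some $C_i$ (which would make the grouping illegal for Proposition~\ref{sec1-prop-5}). The hypothesis that $B \cap C_i \neq \emptyset$ together with $C_i$ integral forces either $C_i \subset B$ or a finite, honest intersection; the former is presumably excluded implicitly (otherwise "$P \notin B\cap C_i$" would be vacuous or the configuration degenerate), so I would state that we assume $C_i \not\subset B$, which is the natural reading. Granting that, the computation above is just bookkeeping with $\varepsilon$, and the worst case $2 + 1 + 1 + 1 + 1 - 4 = 2$ (or the coalesced variants, all $\geq 2$) closes the argument. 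No further geometry is needed beyond Proposition~\ref{sec1-prop-5}.
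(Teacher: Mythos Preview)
Your proof is correct and is exactly the approach the paper intends: the corollary is stated immediately after Proposition~\ref{sec1-prop-5} with no proof, the implicit argument being precisely the application you spell out with $n=4$ pieces $C_1,C_2,C_3,B$ and the case analysis on how the points $B\cap C_i$ may coalesce. Your observation that the hypothesis $P\notin B\cap C_i$ automatically forces $C_i\not\subset B$ (since $P\in C_i$) is the right way to secure the no-common-component requirement of Proposition~\ref{sec1-prop-5}.
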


\subsection{Blow up of ample line bundles.}

\begin{proposition}\label{sec1-prop-11}
Let $X$ be a normal projective surface whose singularities are rational double points and $K_X$ is nef and big. Let $A$ be an ample Cartier divisor on $X$ and $f \colon Y \rightarrow X$ be the blow up of a singular point $P\in X$. Then $B=f^{\ast}(2A)+K_Y-E$, is ample on $Y$, where $E=f^{-1}(P)$.
\end{proposition}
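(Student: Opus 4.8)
The plan is to use the Nakai--Moishezon criterion on the smooth surface $Y$: I must show $B^2>0$ and $B\cdot C>0$ for every irreducible curve $C\subset Y$. Write $g=f\colon Y\to X$ for the blow-up of the rational double point $P$, and let $E=f^{-1}(P)$ be the exceptional locus, which is a (possibly reducible) curve supported on a chain/cycle of smooth rational curves according to the $\mathrm{ADE}$-type of $P$. Since $P$ is a canonical (Du Val) singularity, the key input is that $K_Y=f^{\ast}K_X$ as $\mathbb{Q}$-divisors after passing to the minimal resolution; for a single blow-up of a rational double point we still have $K_Y=f^{\ast}K_X+\Delta$ with $\Delta$ an effective $f$-exceptional $\mathbb{Q}$-divisor (in fact $\Delta\geq 0$ because $f$ dominated by, or equal to a step of, the minimal resolution), and moreover $K_Y\cdot e=0$ for every exceptional curve $e$ of the minimal resolution, hence $K_Y\cdot C_e\leq 0$ control on the components of $E$. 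I would first record these facts as the setup, using Artin's classification cited in the Notation section.

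First I would check $B^2>0$. Expanding,
\[
B^2=\bigl(f^{\ast}(2A)+K_Y-E\bigr)^2
= 4A^2 + 4 f^{\ast}A\cdot(K_Y-E) + (K_Y-E)^2.
\]
Using the projection formula $f^{\ast}A\cdot E=0$ and $f^{\ast}A\cdot K_Y=f^{\ast}A\cdot f^{\ast}K_X=A\cdot K_X$, this becomes $4A^2+4A\cdot K_X+(K_Y-E)^2$. Now $K_X$ is nef and big and $A$ is ample, so $4A^2+4A\cdot K_X>0$ with room to spare; one then needs that $(K_Y-E)^2$, which is bounded below by a constant depending only on the $\mathrm{ADE}$-type of $P$ (since $E^2$, $K_Y\cdot E$ and the intersection pattern on $E$ are universal), does not overwhelm the positive part. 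Since $A$ is an \emph{ample Cartier} divisor we have $A^2\geq 1$ and $A\cdot K_X\geq 0$, while a direct case check gives $(K_Y-E)^2\geq -4$ (the worst case being the largest Du Val configuration that can appear after one blow-up); I expect the cleaner route is to rewrite $B=f^{\ast}(2A+K_X)+(K_Y-f^{\ast}K_X)-E=f^{\ast}(2A+K_X)+(\Delta-E)$ and compute $B^2=(2A+K_X)^2+(\Delta-E)^2$, reducing everything to the nef-and-big class $2A+K_X$ (whose square is $>0$) plus a purely local, negative-semidefinite but explicitly bounded exceptional contribution.

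Next I would verify $B\cdot C>0$ for every irreducible curve $C\subset Y$. If $C=C_e$ is a component of $E$, then $f^{\ast}(2A)\cdot C_e=0$, so $B\cdot C_e=(K_Y-E)\cdot C_e$; here $K_Y\cdot C_e\leq 0$ is small and $-E\cdot C_e$ must be shown to dominate, which again is a finite, type-by-type computation on the Du Val dual graph (for a single blow-up $-E\cdot C_e$ is positive for the components meeting the rest of $E$ in the expected way, and one checks the equality $B\cdot C_e=(K_Y-E)\cdot C_e\geq 1$ directly). If $C$ is not $f$-exceptional, let $\bar C=f_{\ast}C$, so $f^{\ast}(2A)\cdot C=2A\cdot\bar C>0$ since $A$ is ample; it remains to bound $(K_Y-E)\cdot C$ from below. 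Using $f^{\ast}C=C+\sum a_i C_{e_i}$ with $a_i\geq 0$ and $K_Y=f^{\ast}K_X+\Delta$, one gets $K_Y\cdot C=K_X\cdot\bar C-\Delta\cdot(f^{\ast}C-C)\cdots$—cleaner is $(K_Y-E)\cdot C\geq -E\cdot C\geq -(\text{mult}_P\bar C)$, and then $B\cdot C\geq 2A\cdot\bar C-\mathrm{mult}_P\bar C$; since $A\cdot\bar C\geq \mathrm{mult}_P\bar C$ for an ample Cartier divisor (because $A-$ something... more carefully, $A\cdot\bar C\geq 1$ and a genuine curve through $P$ has $A\cdot\bar C$ at least its multiplicity as $A$ separates points and tangents after a Veronese—here one uses only $A$ Cartier ample so $A\cdot\bar C\geq\mathrm{mult}_P\bar C$ is \emph{not} automatic and needs the argument via $f^{\ast}A$ being nef with $f^{\ast}A\cdot C=A\cdot\bar C$ and $E\cdot C\leq \text{something}$), positivity follows.

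The main obstacle I anticipate is the bookkeeping on the exceptional curve $E$: a single blow-up of a Du Val singularity need \emph{not} be (a step toward) the minimal resolution in the naive ordering, and $E$ can be a reducible, even non-reduced in scheme structure, configuration, so the inequalities $K_Y\cdot C_e\leq 0$, $(K_Y-E)\cdot C_e\geq 1$, and the lower bound for $(K_Y-E)^2$ all require carefully invoking Artin's explicit equations and the known shape of $f$ for each of $A_n,D_n,E_6,E_7,E_8$. Everything else—the global positivity coming from $2A+K_X$ being nef and big with positive self-intersection, and $A$ ample forcing $f^{\ast}(2A)\cdot C>0$ on non-exceptional curves—is routine via the projection formula and Nakai--Moishezon.
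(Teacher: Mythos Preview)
Your overall strategy (Nakai--Moishezon, split into exceptional and non-exceptional curves) matches the paper, but there are two misconceptions in the setup and one genuine gap in the argument.

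First, the setup: $Y$ is \emph{not} smooth in general. Blowing up a Du Val point of type $A_n$ ($n\ge 2$), $D_n$, $E_6$, $E_7$, $E_8$ produces a surface with a milder Du Val singularity on $E$. Correspondingly, because Du Val singularities are canonical (crepant), one has $K_Y=f^{\ast}K_X$ on the nose, not $K_Y=f^{\ast}K_X+\Delta$ with a possibly nonzero effective $\Delta$. This simplifies several of your formulas. The paper also records the key local fact you were groping for: in every ADE case the scheme-theoretic fibre satisfies $E^2=-2$ (for $A_1$ it is a single $(-2)$-curve; for $A_{n\ge 2}$ it is a sum of two $\mathbb{P}^1$'s; for $D_n$ and $E_k$ it is $2F$ with $F\cong\mathbb{P}^1$). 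With this, $B^2=4A^2+4A\cdot K_X+K_X^2-2>0$ is immediate, and $B\cdot C>0$ for $C\subset E$ is a short direct check.

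The real gap is the non-exceptional case. Your proposed inequality $E\cdot C\le \mathrm{mult}_P\bar C$ has no clear meaning when $P$ is singular, and even granting some such bound you correctly note that $A\cdot\bar C\ge \mathrm{mult}_P\bar C$ is not automatic for an ample Cartier divisor. The paper replaces this entirely with a Hodge Index argument: since $f^{\ast}A$ is nef and big, for every $a\in\mathbb{R}$ one has $(aE+C)^2\le (f^{\ast}A\cdot(aE+C))^2/(f^{\ast}A)^2=(A\cdot\bar C)^2/A^2$, and optimising in $a$ gives
\[
(E\cdot C)^2\le (-E^2)\Bigl[-C^2+\tfrac{(A\cdot\bar C)^2}{A^2}\Bigr]\le 2\Bigl[K_Y\cdot C+2+\tfrac{(A\cdot\bar C)^2}{A^2}\Bigr],
\]
using $-C^2\le 2+K_Y\cdot C$ from adjunction on the minimal resolution. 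One then multiplies $B\cdot C$ by the positive quantity $\Lambda=2A\cdot\bar C+K_X\cdot\bar C+E\cdot C$ to obtain $\Lambda(B\cdot C)=(2A\cdot\bar C+K_X\cdot\bar C)^2-(E\cdot C)^2$, which the Hodge bound makes positive whenever $K_X\cdot\bar C>0$. This is the missing idea.

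Finally, the paper treats separately the borderline case $K_X\cdot\bar C=0$ (so $\bar C$ is a $(-2)$-curve on $X$): passing to the minimal resolution, the strict transform of $C$ together with the exceptional curves over $P$ forms an ADE configuration, forcing $E\cdot C=1$ and hence $B\cdot C=2A\cdot\bar C-1>0$. Your outline does not isolate this case, and the Hodge inequality alone is not sharp enough to cover it.
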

\begin{proof}
Since $P\in X$ is a rational double point, $Y$ has also rational double points and $K_Y=f^{\ast}K_X$. By the classification of rational double points, $P\in X$ is one of the following types: $A_n$, $D_n$, $E_6$, $E_7$, $E_8$. Simple calculations of the blow up of a rational double point show the following.

Suppose that $P\in X$ is of type $A_n$. If $n=1$ then $E\cong \mathbb{P}^1$ and $E^2=-2$. If $n\geq 2$, then  $E=E_1+E_2$, $E_1\not= E_2$ and $E_i \cong \mathbb{P}^1$, $i=1,2$. Moreover, $E_i^2=-\frac{n+1}{n+2}$, $i=1,2$, $E_1\cdot E_2 =\frac{1}{n+1}$ and $E^2=-2$. In all other cases, $E=2F$, where $F\cong \mathbb{P}^1$ and $E^2=-2$.

In order to show that $B$ is ample it suffices to show that $B^2>0$ and that $B \cdot C>0$, for every integral curve $C$ in $Y$. Since $K_Y=f^{\ast}K_X$, it follows that
\[
B^2=4A^2+4A\cdot K_X+K_X^2+E^2=4A^2+4A\cdot K_X+K_X^2-2>0.
\]
Suppose that $C$ is $f$-exceptional. Then from the above description of the exceptional set of $f$ it follows easily that $B\cdot C>0$. It remains to consider the case when $C$ is not $f$-exceptional.

\textbf{Claim:} Let $C$ be an integral curve in $Y$ which is not $f$-exceptional. Then
\begin{gather}\label{sec1-eq-00000}
(E\cdot C)^2\leq (-E^2)[K_Y \cdot C+\frac{(A\cdot \tilde{C} )^2}{A^2}+2]=2[K_Y \cdot C+\frac{(A\cdot \tilde{C} )^2}{A^2}+2]
\end{gather}
where $\tilde{C}=f_{\ast}C$. 

Indeed. $f^{\ast}A$ is nef and big and hence from the generalized Hodge Index Theorem~\cite[Lemma 3.13]{Tz19} it follows that
\[
(aE+C)^2\leq \frac{(f^{\ast}A\cdot (aE+C))^2}{(f^{\ast}A)^2}=\frac{(A\cdot \tilde{C})^2}{A^2},
\]
for every $a\in \mathbb{R}$. Hence,
\[
E^2 a^2+2(E\cdot C)a +C^2-\frac{(A\cdot \tilde{C})^2}{A^2}\leq 0,
\]
for all $a \in \mathbb{R}$. From this it immediately follows that
\[
(E\cdot C)^2 -E^2[C^2-\frac{(A\cdot C)^2}{A^2}]\leq 0.
\]
Now from the equation (\ref{sec1-eq-7}) it follows that $-C^2\leq 2+K_Y \cdot C$. Therefore the previous inequality gives 
\[
(E\cdot C)^2 \leq (-E^2)[-C^2+\frac{(A\cdot C)^2}{A^2}]\leq (-E^2)[K_Y\cdot C+\frac{(A\cdot C)^2}{A^2}+2].
\]
Considering that $E^2=-2$, the claim follows. 

Let now 
\[
\Lambda=2f^{\ast}A\cdot C+K_Y \cdot C+E\cdot C=2A\cdot \tilde{C} +K_X\cdot \tilde{C}+E\cdot C>0,
\]
since $K_X$ is nef, $A$ ample and $C\not= E$. Then from the inequality (\ref{sec1-eq-00000}) it follows that
\begin{gather*}
\Lambda (B\cdot C)=\Lambda (2f^{\ast}A \cdot C+K_Y\cdot C -E\cdot C)=\Lambda (2A\cdot \tilde{C}+K_X\cdot \tilde{C}-E\cdot C) = \\
(2A \cdot \tilde{C}+K_X\cdot \tilde{C})^2-(E\cdot C)^2\geq
(2A\cdot \tilde{C} +K_X\cdot \tilde{C})^2-(2K_X\cdot \tilde{C}+2\frac{(A\cdot \tilde{C})^2}{A^2}+4)=\\
2(A\cdot\tilde{C})^2(2-\frac{1}{A^2})+(K_X \cdot \tilde{C})(K_X \cdot \tilde{C}+4 (A\cdot \tilde{C})-2)-4
\end{gather*}
Suppose that $K_X \cdot \tilde{C}\not=0$. Then, since $K_X$ is nef and $A$ ample, it follows that $\Lambda (B\cdot C)>0$ and hence $B\cdot C>0$. 

Suppose that $K_X \cdot \tilde{C}=0$. Let $g \colon X^{\prime}\rightarrow Y$ be the minimal resolution of $Y$. Then $X^{\prime}$ is also the minimal resolution of $X$. Moreover, $K_{X^{\prime}}=g^{\ast}K_Y$ and $K_{X^{\prime}}= h^{\ast}K_X$, where $h=fg$.

Suppose that $E\cdot C=0$. Then $B\cdot C=2A \cdot \tilde{C}>0$.  Suppose that $E\cdot C >0$. Let $C^{\prime}=g_{\ast}^{-1}C$ be the birational transform of $C$ in $X^{\prime}$. Then $C^{\prime}$ intersects the exceptional set of $h$ over $P\in X$. 

Consider next cases with respect to the type of singularity of $P\in X$. I will only do the case when $P\in X$ is of type $A_n$. The rest are similar and are omitted.

Since $P\in X$ is an $A_n$ type singularity and $f$ the blow up of $P$, the dual graph of the exceptional set of $h$ over $P$ is
\[
\underset{E^{\prime}_1}{\bullet}-\underset{F_1}{\circ}-\cdots-\underset{F_{n-2}}{\circ}-\underset{E^{\prime}_2}{\bullet},
\]
where $E_i^{\prime}$ are the birational transforms of $E_i$ in $X^{\prime}$, $i=1,2$. The subgraph with the white bullets is the dual graph of the singularity of $Y$ over $P$, the curves $F_1, \ldots, F_{n-2}$ are the $g$-exceptional curves over $P$. A straightforward calculation shows that
\begin{gather*}
g^{\ast}E_1=E_1^{\prime}+\frac{n-2}{n-1}F_1+\frac{n-3}{n-1}F_2+\cdots + \frac{1}{n-1}F_{n-2},\\\nonumber
g^{\ast}E_2=E_2^{\prime}+\frac{1}{n-1}F_1+\frac{2}{n-1}F_2+\cdots + \frac{n-2}{n-1}F_{n-2}
\end{gather*}
And hence
\begin{gather}\label{sec1-eq-00002}
g^{\ast}(E_1+E_2)=E_1^{\prime}+F_1+\cdots +F_{n-2}+E_2^{\prime}.
\end{gather}
Now since $K_X{\prime}\cdot C^{\prime}=0$, it follows that $(C^{\prime})^2=-2$ and $C^{\prime}\cong \mathbb{P}^1$. Then since $C^{\prime}$ intersects $E$, it follows that $C^{\prime}\cup E_1^{\prime}\cup E_2^{\prime}\cup_{i=1}^{n-2}F_i$ is a connected contractible set of $(-2)$ curves on $X^{\prime}$. Hence their configuration must be of type $A_n$, $D_n$, $E_6$, $E_7$ or $E_8$. In any case, $C^{\prime}$ intersects exactly one of the $h$-exceptional curves with intersection multiplicity 1. Then from the equation (\ref{sec1-eq-00002}) it follows that 
\[
C\cdot E=C\cdot (E_1+E_2)=f^{\ast}(E_1+E_2)\cdot C^{\prime}=1.
\]
But then,
\[
B\cdot C=2f^{\ast}A\cdot C +K_{Y}\cdot C-E\cdot C=A\cdot \tilde{C} -1>0,
\]
since $A$ is ample and Cartier on $X$. 
\end{proof}

\begin{corollary}\label{sec1-cor-12}
Let $X$ be a canonically polarized surface with rational double points. Suppose that the singular locus of $X$ consists of the points $A^{\ast}_i$ of type $A_{n_i}$, $i=1,\ldots, r$, $D^{\ast}_j$ of type $D_{m_j}$, $j=1,\ldots, s$, 
$E^{\ast}_{6,k}$ of type $E_6$, $k=1,\ldots , t$, $E^{\ast}_{7,\nu}$ of type $E_7$, $\nu=1,\ldots, w$ and $E^{\ast}_{8,\mu}$ of type $E_8$, $\mu=1,\ldots, u$. Let $f \colon X^{\prime} \rightarrow X$ be the minimal resolution of $X$ and $E_1, \ldots, E_n$ be the $f$-exceptional curves. Then there exist integers $a_i \geq 0$, $i=1,\ldots, n$ with $\sum_{i=1}^na_i < (2^{m+1}-1)^2K_X^2$ such that $B=(2^{m+1}-1)K_{X^{\prime}}-Z$ is ample on $X^{\prime}$, where $Z=\sum_{i=1}^na_iE_i$ and
\[
m=\sum_{i=1}^r \lceil\frac{n_i}{2}+1\rceil +\sum_{j=1}^sm_j+4t+7w+8u.
\]

\end{corollary}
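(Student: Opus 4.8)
The plan is to factor the minimal resolution $f\colon X^{\prime}\to X$ into blow-ups of singular points, to iterate Proposition~\ref{sec1-prop-11} along this factorization in order to produce the divisor $Z$, and finally to read the bound on the $a_i$ off the ampleness of $B$ itself. So first I would write the minimal resolution as a composition
\[
X^{\prime}=X_N\stackrel{g_N}{\longrightarrow}X_{N-1}\stackrel{g_{N-1}}{\longrightarrow}\cdots\stackrel{g_1}{\longrightarrow}X_0=X,
\]
where each $g_k$ is the blow-up of a singular point $P_{k-1}\in X_{k-1}$; this is possible because the singularities of $X$ are rational double points. By the local analysis used in the proof of Proposition~\ref{sec1-prop-11}, each $X_k$ is again a normal surface whose singularities are rational double points, and $K_{X_k}=g_k^{\ast}K_{X_{k-1}}$; in particular $K_{X_k}$ is nef and big, and it is Cartier since rational double points are Gorenstein. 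The integer $m$ is designed precisely as an upper bound for the number of point blow-ups needed to resolve all the singularities of $X$: verifying, type by type, how many blow-ups are required to resolve a rational double point of type $A_n$, $D_n$, $E_6$, $E_7$ or $E_8$ — the kind of case computation carried out in the proof of Proposition~\ref{sec1-prop-11} — gives $N\le m$.

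Next I would iterate Proposition~\ref{sec1-prop-11}. Put $A_0=K_X$, which is ample and Cartier on $X_0=X$. Inductively, if $A_{k-1}$ is an ample Cartier divisor on $X_{k-1}$, then Proposition~\ref{sec1-prop-11} applied to $g_k$ shows that
\[
A_k:=g_k^{\ast}(2A_{k-1})+K_{X_k}-\widetilde{E}_k
\]
is ample on $X_k$, where $\widetilde{E}_k=g_k^{-1}(P_{k-1})$; moreover $A_k$ is again Cartier, because $K_{X_k}$ is Cartier and $\widetilde{E}_k$ is an effective Cartier divisor (the blow-up of an ideal makes that ideal invertible). Writing $f_k\colon X^{\prime}\to X_k$ for the induced morphism and using that the resolution is crepant, so that $f_k^{\ast}K_{X_k}=f^{\ast}K_X=K_{X^{\prime}}$, a straightforward induction gives
\[
f_k^{\ast}A_k=(2^{k+1}-1)K_{X^{\prime}}-\sum_{j=1}^{k}2^{k-j}\,f_j^{\ast}\widetilde{E}_j .
\]
Taking $k=N$ shows that $A_N=(2^{N+1}-1)K_{X^{\prime}}-Z$ is ample on $X^{\prime}$, where $Z:=\sum_{j=1}^{N}2^{N-j}f_j^{\ast}\widetilde{E}_j$ is an effective divisor supported on the $f$-exceptional curves; write $Z=\sum_{i=1}^{n}a_iE_i$ with $a_i\in\mathbb{Z}_{\ge0}$.

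Since $K_{X^{\prime}}=f^{\ast}K_X$ is nef and $m\ge N$, the divisor
\[
B=(2^{m+1}-1)K_{X^{\prime}}-Z=A_N+(2^{m+1}-2^{N+1})K_{X^{\prime}}
\]
is the sum of an ample divisor and a nonnegative multiple of a nef divisor, hence ample; this gives the $a_i$ of the statement. It remains to bound $\sum_i a_i$. Every $E_i$ is $f$-exceptional, so $K_{X^{\prime}}\cdot E_i=0$ and therefore $B\cdot E_i=-Z\cdot E_i$; as $B$ is ample and $E_i$ is an integral curve, this forces $Z\cdot E_i\le-1$ for all $i$. Consequently
\[
-Z^2=-\sum_{i=1}^{n}a_i\,(Z\cdot E_i)=\sum_{i=1}^{n}a_i\,|Z\cdot E_i|\ \ge\ \sum_{i=1}^{n}a_i .
\]
On the other hand $K_{X^{\prime}}\cdot Z=0$ and $K_{X^{\prime}}^2=K_X^2$, so $B^2=(2^{m+1}-1)^2K_X^2+Z^2$, whence $-Z^2=(2^{m+1}-1)^2K_X^2-B^2<(2^{m+1}-1)^2K_X^2$ because $B^2>0$. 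Combining the last two displays yields $\sum_{i=1}^{n}a_i<(2^{m+1}-1)^2K_X^2$, as required.

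The one genuinely non-formal ingredient is the first step: one must check, case by case, that blowing up a rational double point of each type keeps one inside the class of normal surfaces with (crepant) rational double point singularities, and that the resolution of each singularity terminates after at most the number of blow-ups recorded by the corresponding summand of $m$. Once this is granted, everything else is bookkeeping with pullbacks together with the elementary observation that ampleness of $B$ forces $|Z\cdot E_i|\ge1$, which is exactly what turns the inequality $-Z^2<(2^{m+1}-1)^2K_X^2$ (immediate from $B^2>0$) into the asserted bound on $\sum_i a_i$.
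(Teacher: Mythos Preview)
Your proof is correct and follows essentially the same approach as the paper: factor the minimal resolution into point blow-ups (at most $m$ of them, by the type-by-type count), iterate Proposition~\ref{sec1-prop-11} starting from $A_0=K_X$, and then extract the bound on $\sum a_i$ from $B^2>0$ together with $K_{X'}\cdot Z=0$. Your final estimate is literally the paper's: since $K_{X'}\cdot Z=0$ one has $B\cdot Z=-Z^2$, so your chain $\sum a_i\le -Z^2<(2^{m+1}-1)^2K_X^2$ is the same as the paper's $\sum a_i\le\sum a_i(B\cdot E_i)=B\cdot Z<(2^{m+1}-1)^2K_X^2$; you merely spell out the intermediate pullback formula $f_k^{\ast}A_k=(2^{k+1}-1)K_{X'}-\sum_{j\le k}2^{k-j}f_j^{\ast}\widetilde{E}_j$ explicitly and pass from $N$ to $m$ by adding a nef multiple of $K_{X'}$.
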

\begin{proof}
Let $P\in X$ be a singularity of type $A_n$. Then $P\in X$ is resolved after $\lceil\frac{n}{2}+1\rceil$ blow ups. Let $P\in X$ be of type $D_n$. Then $P\in X$ is resolved after $n$ blow ups if $n$ is even and after $n-1$ blow ups if $n$ is odd. An $E_6$ type singularity is resolved after $4$ blow ups, an $E_7$ requires $7$ blow ups and an $E_8$ 8 blow ups. Now by successive applications of Proposition~\ref{sec1-prop-11} starting with $A=K_X$ it follows that there exists an effective divisor $Z=\sum_{i=1}^na_iE_i$ such that $B=(2^{m+1}-1)K_{X^{\prime}}-Z$ is ample. 

Since $B$ is ample, $B^2>0$. Then,
\begin{gather}\label{sec1-eq-3333}
0 < B^2=[(2^{m+1}-1)K_{X^{\prime}}-Z]\cdot [(2^{m+1}-1)K_{X^{\prime}}-Z]=\\\nonumber
(2^{m+1}-1)^2K_{X^{\prime}}^2-(2^{m+1}-1)K_{X^{\prime}} \cdot Z-[(2^{m+1}-1)K_{X^{\prime}}-Z]\cdot Z =(2^{m+1}-1)^2K_{X}^2-B\cdot Z,
\end{gather}
since $K_{X^{\prime}}=f^{\ast}K_X$ and hence $K_{X^{\prime}} \cdot Z=0$ and $K_{X^{\prime}}^2=K_X^2$. Hence 
\begin{gather*}
B\cdot Z=\sum_{i=1}^n a_i (B\cdot E_i) < (2^{m+1}-1)^2K_X^2,
\end{gather*}
and hence since $B$ is ample, $\sum_{i=1}^n a_i < (2^{m+1}-1)^2K_X^2$. This concludes the proof of the corollary.
\end{proof}

In the case when $X$ has a nontrivial global vector field, a case of interest in this paper, the following holds.

\begin{corollary}\label{sec1-cor-13}
Let $X$ be a canonically polarized surface with rational double points. Suppose that $X$ has a nontrivial global vector field. Let   $f \colon X^{\prime} \rightarrow X$ be its minimal resolution and $E_i$, $i=1,\ldots, n$ the $f$-exceptional curves. Then there exists $a_i \geq 0$, $i=1,\ldots, n$ with $\sum_{i=1}^na_i <(2^m-1)^2K_X^2$ such that  $(2^m-1)K_{X^{\prime}}-Z$ is ample, where  $m=17K_X^2+37$, $Z=\sum_{i=1}^n a_i E_i$.
\end{corollary}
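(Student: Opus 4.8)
The plan is to read this off from Corollary~\ref{sec1-cor-12} together with a bound, depending only on $K_X^2$, for the singularity exponent
$m_0:=\sum_{i=1}^r\lceil n_i/2+1\rceil+\sum_{j=1}^sm_j+4t+7w+8u$
occurring there; the hypothesis that $X$ carries a nontrivial global vector field is exactly what makes such a bound available.

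First I would record a monotonicity property of the conclusion. Fix an effective $Z=\sum a_iE_i$. If $(2^{N_0}-1)K_{X^{\prime}}-Z$ is ample for some $N_0$ and $\sum a_i<(2^{N_0}-1)^2K_X^2$, then for every $N\ge N_0$ the divisor $(2^{N}-1)K_{X^{\prime}}-Z=\bigl[(2^{N_0}-1)K_{X^{\prime}}-Z\bigr]+(2^{N}-2^{N_0})K_{X^{\prime}}$ is again ample, being the sum of an ample divisor and a nonnegative multiple of the nef divisor $K_{X^{\prime}}=f^{\ast}K_X$ (nef since $K_X$ is ample), and trivially $\sum a_i<(2^{N_0}-1)^2K_X^2\le(2^{N}-1)^2K_X^2$. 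Applying Corollary~\ref{sec1-cor-12} with $N_0=m_0+1$, it therefore suffices to show $m_0+1\le 17K_X^2+37$, i.e. $m_0\le 17K_X^2+36$: the very same integers $a_i$ furnished by Corollary~\ref{sec1-cor-12} then work with $m:=17K_X^2+37$.

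So the content is the inequality $m_0\le 17K_X^2+36$. I would first reduce $m_0$ to the number $n$ of $f$-exceptional curves: using $\lceil n_i/2+1\rceil\le n_i+1$ for $n_i\ge 1$, and noting that the contribution of a $D_{m_j}$ (resp. $E_6,E_7,E_8$) point to $m_0$ is at most the number $m_j$ (resp. $6,7,8$) of exceptional curves over it, one gets $m_0\le r+\bigl(\sum n_i+\sum m_j+6t+7w+8u\bigr)=r+n\le 2n$, where $r$, the number of $A$-points, is $\le n$ because each singular point contributes at least one exceptional curve. Hence it is enough to bound $n$ linearly in $K_X^2$, say $n\le 8K_X^2+18$. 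Now the $E_i$ form disjoint $ADE$-configurations of $(-2)$-curves on the smooth minimal surface of general type $X^{\prime}$ (minimal because $K_{X^{\prime}}=f^{\ast}K_X$ is nef), they are linearly independent in $\operatorname{Num}(X^{\prime})$ and orthogonal to $K_{X^{\prime}}$, so by the Hodge index theorem (as $K_{X^{\prime}}^2=K_X^2>0$) $n\le\rho(X^{\prime})-1\le b_2(X^{\prime})-1$. By Noether's formula $b_2(X^{\prime})=c_2(X^{\prime})-2+2b_1(X^{\prime})=12\chi(\mathcal O_{X^{\prime}})-K_X^2-2+2b_1(X^{\prime})$, so what one needs are explicit upper bounds, in terms of $K_X^2$, for $\chi(\mathcal O_{X^{\prime}})$ and for $b_1(X^{\prime})$. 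For $\chi$ one uses the positive-characteristic form of Noether's inequality $K_{X^{\prime}}^2\ge 2p_g-4$, and for $b_1$ (equivalently the irregularity) one uses the analysis of canonically polarized surfaces with non-reduced automorphism scheme in~\cite{Tz17},~\cite{Tz19}, which — and this is the point at which the existence of a nontrivial global vector field is genuinely invoked — bounds the relevant invariants (irregularity, or directly $c_2$, or the number of singular points) linearly in $K_X^2$. Substituting into $n\le b_2(X^{\prime})-1$ and tracking constants yields $n\le 8K_X^2+18$, hence $m_0\le 2n\le 17K_X^2+36$, which by the previous paragraph completes the proof.

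The main obstacle is precisely this last estimate $n\le 8K_X^2+18$: it requires the correct positive-characteristic Noether inequality and, more essentially, an explicit linear bound on the irregularity (or on $c_2$, or on the number of rational double points) of a canonically polarized surface admitting a nontrivial vector field, the characteristic-zero Miyaoka-type bounds that would make such an estimate automatic being unavailable here. This is where the hypothesis is used; once that input is in place, the rest is the elementary bookkeeping of the first three paragraphs, and the constant $17$ is deliberately generous so as to absorb the various crude comparisons.
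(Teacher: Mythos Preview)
Your overall architecture is exactly the paper's: apply Corollary~\ref{sec1-cor-12}, observe the monotonicity in the exponent (since $K_{X'}=f^{\ast}K_X$ is nef), and reduce to the inequality $m_0\le 17K_X^2+36$. The paper then finishes in one line by quoting \cite[Corollary~3.5]{Tz19}, which gives directly
\[
m_0 \;\le\; 12\chi(\mathcal{O}_X)+11K_X^2,
\]
and combining with Noether's inequality $2\chi(\mathcal{O}_X)\le K_X^2+6$ yields $m_0\le 17K_X^2+36$ on the nose. No bound on $b_1$ is needed anywhere.

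Your detour through the number $n$ of exceptional curves is where the argument breaks. The elementary inequality $m_0\le 2n$ is correct, but it throws away a factor of $2$ that you never recover: the bound on $n$ actually available from \cite{Tz19} (see the paper's equation~(\ref{sec3-eq-31}) and \cite[Theorem~3.2]{Tz19}) is $n\le 12\chi(\mathcal{O}_X)+11K_X^2\le 17K_X^2+36$, so via your route one only gets $m_0\le 34K_X^2+72$, which does \emph{not} give the stated $m=17K_X^2+37$. The sharper claim $n\le 8K_X^2+18$ that you assert is not justified by the Hodge-index/Noether-formula argument you sketch; even granting $b_1(X')=0$ that route gives $n\le 5K_X^2+33$, which still fails for small $K_X^2$. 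In short, the gap is not conceptual but quantitative: you need the \emph{direct} bound on $m_0$ from \cite[Corollary~3.5]{Tz19}, not a bound on $n$, and once you use it the whole discussion of $b_2$ and $b_1$ becomes unnecessary.
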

\begin{proof}
From Corollary~\ref{sec1-cor-12} it follows that there exist $a_i\geq 0$ such  $(2^{d+1}-1)K_{X^{\prime}}-Z$ is ample, where $Z=\sum_{i=1}^na_iE_i$ and
\[
d=\sum_{i=1}^r \lceil\frac{n_i}{2}+1\rceil +\sum_{j=1}^sm_j+4t+7w+8u.
\]
Now from~\cite[Corollary 3.5]{Tz19} it follows that $d \leq 12  \chi(\mathcal{O}_X)+11K_X^2$  and from Noether's inequality, $2\chi(\mathcal{O}_X) \leq K_X^2+6$. Hence $d+1\leq 17K_X^2+37$. Since $K_{X^{\prime}}$ is nef and big it follows that $A=(2^m-1)K_{X^{\prime}}-Z$ is ample, where $m=17K_X^2+37$. The corollary now follows from Corollary~\ref{sec1-cor-12}.

\end{proof}

\begin{remarks}
\begin{enumerate}
\item The divisor $Z$ in Corollary~\ref{sec1-cor-12} can be explicitly calculated by using Proposition~\ref{sec1-prop-11} and the resolutions of rational double points. However this would increase the length of the paper without adding anything to the main results. For this reason I opted not to do it and instead give an upper bound in Corollary~\ref{sec1-cor-12} which will be sufficient for the needs of this paper.
\item The minimal resolution of $X$ in Corollary~\ref{sec1-cor-12} is a minimal surface of general type. Corollary~\ref{sec1-cor-12} can be rewritten in the following form: \textit{Let $X$ be a minimal surface of general type. Then $mK_{X}-Z$ is ample on $X$, where $Z$ is an effective divisor supported on the set of $(-2)$ curves on $X$}, where $m$ and $Z$ can be described as in Corrolary~\ref{sec1-cor-12}.
\end{enumerate}
\end{remarks}

\subsection{Contraction of elliptic curves in characteristic $p>0$.}
Let $E \subset X$ be a  elliptic curve in a smooth projective surface defined over an algebraically closed field $k$ of characteristic $p >0$. Suppose  that $E^2<0$. Unlike the case of smooth rational curves with negative self intersection, $E$ cannot always be contracted algebraically. 

The next proposition shows that if an elliptic curve is contracted then it is contracted to a Gorenstein singularity.

\begin{proposition}\label{sec1-prop-100}
Let $X$ be a smooth surface and $E\subset X$ an elliptic curve in $X$. Suppose that there exists a projective morphism $f \colon X \rightarrow Y$ with the following properties.
\begin{enumerate}
\item $Y$ is a normal surface.
\item $f(E)$ is a point $P$ in $Y$.
\item $X-E \cong Y-P$.
\end{enumerate}
Then $K_Y$ is Cartier, i.e., $P\in Y$ is a Gorenstein singularity.
\end{proposition}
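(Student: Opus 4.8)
The plan is to show that $K_Y$ is Cartier by establishing that its index-one cover (or more precisely, the obstruction to $K_Y$ being Cartier) vanishes, and the most direct route is to compute the discrepancy of $f$ along $E$ and show it must be zero. First I would write $K_X = f^{\ast}K_Y + aE$ for some rational number $a$ (this makes sense after passing to a $\mathbb{Q}$-Cartier multiple of $K_Y$, or one can first argue $Y$ is $\mathbb{Q}$-Gorenstein since it has an isolated normal surface singularity and $f$ contracts an irreducible curve, so $\mathrm{Pic}(Y)_{\mathbb{Q}} \to \mathrm{Pic}(X)_{\mathbb{Q}}$ has a computable cokernel). Intersecting with $E$ and using $f^{\ast}K_Y \cdot E = K_Y \cdot f_{\ast}E = 0$ gives $K_X \cdot E = a E^2$. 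By the adjunction formula on the smooth surface $X$, since $E$ is a smooth elliptic curve, $K_X \cdot E + E^2 = 2p_a(E) - 2 = 0$, so $K_X \cdot E = -E^2$. Combining, $-E^2 = a E^2$, and since $E^2 < 0$ we conclude $a = -1$, i.e. $K_X = f^{\ast}K_Y - E$.

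Next I would promote this numerical identity to an actual linear equivalence of divisors, which is where the real content lies. Write $K_X = f^{\ast}K_Y - E$ on the level of $\mathbb{Q}$-divisors; I want $f^{\ast}K_Y$ to be an honest Cartier divisor, equivalently $K_Y$ itself Cartier. The key step is to show $f_{\ast}\mathcal{O}_X(K_X + E) = \mathcal{O}_Y(K_Y)$ is invertible at $P$. By relative duality for $f$, $f_{\ast}\omega_X \cong \omega_Y$ and more relevantly $R^1 f_{\ast}\mathcal{O}_X$ is dual to something supported at $P$; since $E$ is elliptic, $R^1 f_{\ast}\mathcal{O}_X \neq 0$, which is exactly why $P \in Y$ is not rational. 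The standard approach (following the Artin–Reid theory of rational and elliptic Gorenstein singularities) is: consider the fundamental cycle / the scheme structure on $E$, and use that $h^1(\mathcal{O}_E) = 1$ together with $E^2 < 0$ to show the singularity is a "minimally elliptic" type point in the sense of Laufer, for which $K_Y$ is automatically Cartier; equivalently, show directly that $\omega_Y$ is generated by one element at $P$ by analyzing $f_{\ast}\omega_X(E)$ via the exact sequence $0 \to \omega_X \to \omega_X(E) \to \omega_E \to 0$ and pushing forward. Applying $f_{\ast}$ and using $f_{\ast}\omega_X = \omega_Y$, $R^1f_{\ast}\omega_X = 0$ (Grauert–Riemenschneider, valid in this surface setting), together with $f_{\ast}\omega_E = \omega_E$ a skyscraper of length $1$ at $P$, one gets a surjection $f_{\ast}\omega_X(E) \to k(P) \to 0$ with kernel $\omega_Y$; then the numerical identity $K_X + E = f^{\ast}K_Y$ forces $f_{\ast}\omega_X(E) = \mathcal{O}_Y$ (it is reflexive of rank one, trivial away from $P$, and the length computation pins it down), so $\omega_Y \cong \mathfrak{m}_P^{?}$... — more cleanly: $\mathcal{O}_X(f^{\ast}K_Y) = \omega_X(E)$ and one checks $f_{\ast}$ of this line bundle, which is $f$-trivial since $f^{\ast}K_Y$ is pulled back, is an invertible $\mathcal{O}_Y$-module, namely $\omega_Y$.

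The cleanest packaging of the middle paragraph is probably: set $L = \omega_X(E)$; then $L \cdot E = (K_X + E)\cdot E = 0$ by adjunction, so $L$ is numerically $f$-trivial, and since $E$ is the only $f$-exceptional curve and $f$ is a contraction to a normal surface, $L = f^{\ast}N$ for a $\mathbb{Q}$-Cartier divisor class $N$ on $Y$; intersection theory identifies $N$ with $K_Y$, and the point is that $L$, being a genuine line bundle on $X$ that is the pullback of a $\mathbb{Q}$-Cartier class, descends to a genuine line bundle on $Y$ — this uses that $f_{\ast}$ of an $f$-numerically-trivial line bundle on a surface contraction is invertible, which follows from the theorem on formal functions plus $h^1(\mathcal{O}_E) = 1 = \dim R^1 f_{\ast}\mathcal{O}_X$ controlling everything. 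I expect the main obstacle to be the characteristic-$p$ subtleties: Grauert–Riemenschneider vanishing can fail in positive characteristic in general, so I would need to either invoke the known validity of $R^1 f_{\ast}\omega_X = 0$ for birational contractions of smooth surfaces in all characteristics (this holds, by work on surface singularities), or circumvent it entirely by arguing directly with the cohomology of $\mathcal{O}_E$ on the elliptic curve $E$ and the structure of the thickened exceptional scheme, which is purely characteristic-free once one knows $p_a(E) = 1$. The essential geometric input — adjunction on the smooth surface plus $E^2 < 0$ forcing discrepancy $-1$ — is characteristic-independent, so only the descent-of-line-bundle step needs care.
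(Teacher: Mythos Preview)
Your overall strategy --- show that $f_{\ast}\omega_X(E)$ is an invertible sheaf and identify it with $\omega_Y$ --- is exactly what the paper does. But there are two real gaps in your execution.

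First, your opening discrepancy computation presupposes that $K_Y$ is $\mathbb{Q}$-Cartier, and your parenthetical justification (``isolated normal surface singularity and $f$ contracts an irreducible curve'') does not establish this. The local class group of a simple elliptic singularity is typically the elliptic curve itself, so the singularity is not $\mathbb{Q}$-factorial; that $K_Y$ happens to be Cartier is precisely the content of the proposition, so assuming $\mathbb{Q}$-Gorenstein is circular. The paper avoids this entirely: it never writes $K_X = f^{\ast}K_Y + aE$, and works directly with the line bundle $A = \omega_X(E)$ on $X$.

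Second, and more seriously, your ``cleanest packaging'' asserts that an $f$-numerically-trivial line bundle on a surface contraction has invertible push-forward. This is false for elliptic contractions: if $L|_E$ is a nontrivial degree-$0$ line bundle on the elliptic curve $E$, then $L$ is $f$-numerically trivial but $f_{\ast}L$ is not invertible at $P$. What you need is not $A \cdot E = 0$ but the stronger fact $A|_E \cong \mathcal{O}_E$ (which is adjunction), and then you must propagate this to all infinitesimal thickenings: $A \otimes \mathcal{O}_{nE} \cong \mathcal{O}_{nE}$ for every $n \geq 1$. The paper does this by induction: tensoring the sequence $0 \to \mathcal{O}_E(-(n-1)E) \to \mathcal{O}_{nE} \to \mathcal{O}_{(n-1)E} \to 0$ with $A$ and computing $\mathrm{Ext}^1_{nE}(\mathcal{O}_{(n-1)E}, \mathcal{O}_E(-(n-1)E)) \cong k$ (using $E^2 < 0$), one sees there are exactly two isomorphism classes of such extensions, and since $A \otimes \mathcal{O}_{nE}$ is locally free of rank one it cannot be the split one. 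Then the formal functions theorem gives $\widehat{(f_{\ast}A)}_P \cong \varprojlim H^0(\mathcal{O}_{nE}) \cong \widehat{\mathcal{O}}_{Y,P}$, so $f_{\ast}A$ is invertible. You gesture at ``the structure of the thickened exceptional scheme'' but never supply this argument, and your alternative route via Grauert--Riemenschneider is, as you yourself note, unreliable in positive characteristic.
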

\begin{proof}
Let $A=\mathcal{O}_X(K_X+E)$. I will show that $L=f_{\ast}A$ is invertible in $Y$. 

Suppose that this has been shown. Then there exists an injection $f^{\ast}L \rightarrow A$ and the restriction of this map to $X-E$ is an isomorphism. Therefore $\mathcal{O}_Y(K_Y)|_{Y-P} \cong L|_{Y-P}$. Then, since $Y$ is normal, it follows that $L \cong \mathcal{O}_Y(K_Y)$ and hence $Y$ is Gorenstein as claimed.

It remains to prove that $L=f_{\ast}A$ is invertible.  By the formal functions theorem,
\[
\hat{(f_{\ast}A)}_P \cong \lim_{\leftarrow}H^0(X,A\otimes \mathcal{O}_{nE}),
\]
There are also short exact sequences
\begin{gather}\label{sec1-eq-13}
0 \rightarrow \mathcal{O}_E(-(n-1)E)\rightarrow \mathcal{O}_{nE} \rightarrow \mathcal{O}_{(n-1)E} \rightarrow 0,
\end{gather}
for $n \geq 2$. I will show that for all $n\geq 1$, $A\otimes \mathcal{O}_{nE}\cong \mathcal{O}_{nE}$. For $n=1$, this is simply the adjunction formula for $E$. Assume by induction that 
$A\otimes \mathcal{O}_{(n-1)E}\cong \mathcal{O}_{(n-1)E}$. Then tensoring the equation~\ref{sec1-eq-13} with $A$ we get that
\begin{gather}\label{sec1-eq-14}
0 \rightarrow \mathcal{O}_E(-(n-1)E)\rightarrow A\otimes \mathcal{O}_{nE} \rightarrow \mathcal{O}_{(n-1)E} \rightarrow 0,
\end{gather}
This can be seen as an exact sequence of $\mathcal{O}_{nE}$-modules. Such extensions are classified by $\mathrm{Ext}^1_{nE}(\mathcal{O}_{(n-1)E},\mathcal{O}_E(-(n-1)E))$. Now from the equation~\ref{sec1-eq-13} it follows that there exists an exact sequence
\begin{gather*}
0 \rightarrow \mathrm{Hom}(\mathcal{O}_E(-(n-1)E),\mathcal{O}_E(-(n-1)E) \rightarrow \\
\mathrm{Ext}^1_{nE}(\mathcal{O}_{(n-1)E},\mathcal{O}_E(-(n-1)E)) 
\rightarrow \mathrm{Ext}^1_{nE}(\mathcal{O}_{nE},\mathcal{O}_{E}(-(n-1)E))
\end{gather*}
But 
\[
\mathrm{Ext}^1_{nE}(\mathcal{O}_{nE},\mathcal{O}_{E}(-(n-1)E))=H^1(E, \mathcal{O}_{E}(-(n-1)E))=H^0(\mathcal{O}_E((n-1)E)=0,
\]
since $E^2<0$. Therefore,
\[
\mathrm{Ext}^1_{nE}(\mathcal{O}_{(n-1)E},\mathcal{O}_E(-(n-1)E)) \cong k.
\]
Hence there exists exactly two isomorphism classes of extensions of $\mathcal{O}_{(n-1)E}$ by $\mathcal{O}_E(-(n-1)E)$. These are $\mathcal{O}_{nE}$ and $\mathcal{O}_{(n-1)E}\oplus \mathcal{O}_E(-(n-1)E)$. 
Considering that $A\otimes \mathcal{O}_E \cong \mathcal{O}_E$, then $A\otimes \mathcal{O}_{nE}$ cannot be the second possibility. Hence $A\otimes \mathcal{O}_{nE}\cong \mathcal{O}_{nE}$, as claimed.

Then by the formal functions theorem,
\[
\hat{(f_{\ast}A)}_P \cong \lim_{\leftarrow}H^0(X,A\otimes \mathcal{O}_{nE})\cong \lim_{\leftarrow}H^0(X, \mathcal{O}_{nE})\cong \hat{\mathcal{O}}_{Y,P}.
\]
But this implies that $f_{\ast}A$ is locally free of rank 1 at $P$. Hence $f_{\ast}$ is an invertible sheaf as claimed. 

\end{proof}
The next proposition shows that under certain restrictions an elliptic curve can be contracted.

\begin{proposition}\label{sec1-prop-7}
Let $X$ be a $\mathbb{Q}$-factorial normal projective surface over an algebraically closed field $k$ of characteristic $p>0$. Let $E \subset X$ be a  elliptic curve contained in the smooth part of $X$ such that $E^2<0$. Suppose that either $K_X$ is nef and big or $k=\bar{\mathbb{F}}_p$. Then there exists a projective morphism $f \colon X \rightarrow Y$, such that:
\begin{enumerate}
\item  $f(E)=P$, $P\in Y$ is a closed point.
\item There exists an open set $ V \subset Y$ such that $P\in V$ and $f^{-1}(V)-E \cong V-P$. Moreover, if either $X$ is smooth, or $K_X$ is ample or $k=\bar{\mathbb{F}}_p$, then $V=Y$, i.e., the exceptional set of $f$ is exactly $E$.
\item $K_Y$ is invertible at $P\in Y$. 
\item If $k=\bar{\mathbb{F}}_p$, and $X$ is $\mathbb{Q}$-factorial, then $Y$ is also $\mathbb{Q}$-factorial.
\end{enumerate}
\end{proposition}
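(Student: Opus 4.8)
The plan is to contract $E$ in two stages: first produce a contraction as an algebraic space (or prove existence via Artin's contractibility criterion / Grauert–Mumford-type arguments adapted to positive characteristic), then check that the resulting object is in fact a projective scheme under the stated hypotheses, and finally read off the Gorenstein and $\mathbb{Q}$-factoriality statements. Throughout I would replace $X$ by a small neighbourhood of $E$ when convenient, since the questions (3) and the local part of (2) are étale-local around $P$.

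First I would establish the contraction when $K_X$ is nef and big. Since $E$ lies in the smooth locus and $E^2<0$, the intersection form on the components of $E$ (here $E$ is irreducible) is negative definite, so by Artin's criterion there is a contraction $f\colon X\to Y$ in the category of algebraic spaces with $f(E)=P$ and $f$ an isomorphism away from $E$; the point is to make $Y$ projective. The key is to exhibit a line bundle on $X$ that is nef, big, and has degree zero exactly on $E$: take $H=mK_X - (K_X\cdot E)\,?$—more precisely, since $K_X$ is nef and big and $K_X\cdot E\geq 0$, one forms a $\mathbb{Q}$-divisor of the shape $K_X + aE$ for a suitable rational $a>0$ (determined by $E^2$ and $K_X\cdot E$) so that $(K_X+aE)\cdot E=0$ while $K_X+aE$ stays nef and big on a neighbourhood; its pushforward under $f$ is then ample on $Y$ by the Nakai–Moishezon criterion for the algebraic space $Y$, forcing $Y$ to be a projective scheme. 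When $K_X$ is ample the same argument gives $V=Y$ with no leftover contracted curves; when $X$ is smooth the contraction of a single curve with $E^2<0$ exists as a scheme by the usual Castelnuovo/Artin argument directly. For $k=\bar{\mathbb{F}}_p$ I would instead use the standard fact that over $\bar{\mathbb{F}}_p$ every curve with negative-definite intersection matrix in a smooth surface is contractible to a scheme (this is where being over $\bar{\mathbb{F}}_p$ is genuinely used: a suitable multiple of any divisor becomes effective, so one can run the argument of the previous case without needing $K_X$ nef and big), and this also yields $V=Y$.

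Next, part (3): $K_Y$ is invertible at $P$. This is exactly Proposition~\ref{sec1-prop-100} once we know $f\colon X\to Y$ satisfies its hypotheses — $Y$ normal (automatic, since $f_*\mathcal{O}_X=\mathcal{O}_Y$ as $E$ is connected and $X$ normal), $f(E)=P$, and $X-E\cong Y-P$ at least over the neighbourhood $V$. Since Proposition~\ref{sec1-prop-100} is a local statement at $P$, replacing $Y$ by $V$ is harmless, and it gives that $K_Y$ is Cartier at $P$. For part (4), with $k=\bar{\mathbb{F}}_p$ and $X$ already $\mathbb{Q}$-factorial, I would take any Weil divisor $\bar{B}$ on $Y$, pull it back to its strict transform $B$ on $X$, use $\mathbb{Q}$-factoriality of $X$ to find $N>0$ with $NB$ Cartier, and then note that $\mathcal{O}_X(NB)$ has degree some integer $e$ on $E$; replacing $N$ by a further multiple (again using that over $\bar{\mathbb{F}}_p$ enough divisors become effective, or simply twisting by a multiple of $E$ which is $f$-exceptional and hence invisible on $Y$) one arranges $\mathcal{O}_X(NB + (\text{multiple of }E))$ to be trivial on a neighbourhood of $E$, hence the pushforward is invertible near $P$; away from $P$, $Y\cong X$ is $\mathbb{Q}$-factorial already, so $N'\bar B$ is Cartier on all of $Y$ for suitable $N'$.

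The main obstacle I expect is the passage from "contraction as an algebraic space" to "contraction as a projective scheme," i.e., producing the ample line bundle on $Y$: one must check carefully that the $\mathbb{Q}$-divisor of the form $K_X+aE$ (or its analogue) is not merely nef but strictly positive on every curve other than $E$ in a neighbourhood, and that bigness is preserved — this is where the hypotheses "$K_X$ nef and big," "$K_X$ ample," "$X$ smooth," or "$k=\bar{\mathbb{F}}_p$" each have to be used in a slightly different way, and the case $k=\bar{\mathbb{F}}_p$ in particular relies on the arithmetic input that torsion-vs-effectivity behaves well over the algebraic closure of a finite field. The other delicate point is that without any of these hypotheses one only gets contractibility on a neighbourhood $V$ (some extra curves may need to be contracted to make things projective, or the contraction may only exist formally/analytically), which is exactly why the statement of (2) is conditional.
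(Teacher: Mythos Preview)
Your overall strategy---contract first to an algebraic space via Artin, then exhibit an ample line bundle on the target---is a legitimate alternative to the paper's route, but the paper does something more direct: it never passes through algebraic spaces. Instead it writes down a nef and big divisor on $X$ that is trivial on $E$ and applies positive-characteristic semi-ampleness results to get the projective contraction in one step. Concretely, because $E$ is elliptic, adjunction gives $(K_X+E)|_E\cong\omega_E\cong\mathcal{O}_E$, so $A=m(K_X+E)$ (for $m$ clearing denominators) is nef, big, and satisfies $A\cdot E=0$; Tanaka's semi-ampleness theorem for surfaces in characteristic $p$ then produces $f$ as a projective morphism when $K_X$ is nef and big. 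Over $\bar{\mathbb{F}}_p$ the paper instead takes any ample $H$, sets $A=H+mE$ with $(H+mE)\cdot E=0$, and invokes Keel's basepoint-freeness theorem; $\mathbb{Q}$-factoriality of $Y$ is quoted from Tanaka rather than argued by hand. A point your sketch leaves implicit is that for the pushforward to be a line bundle (or for descent to work) the restriction to $E$ must be \emph{trivial}, not merely of degree zero---for an elliptic curve these differ---and it is precisely the adjunction identity $(K_X+E)|_E\cong\mathcal{O}_E$ that forces $a=1$ and makes this step go through.

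There is also a genuine gap in your treatment of the smooth case. The claim that ``when $X$ is smooth the contraction of a single curve with $E^2<0$ exists as a scheme by the usual Castelnuovo/Artin argument directly'' is not correct for an elliptic curve: Castelnuovo concerns $(-1)$-curves, and Artin's scheme-theoretic contractibility criterion is tied to the resulting singularity being rational, which a simple elliptic singularity is not. The paper handles this case with a more delicate construction: starting from the explicit ample divisor $mK_X-Z$ of Corollary~\ref{sec1-cor-12} (with $Z$ supported on $(-2)$-curves), it builds $A=m'K_X-Z'+m'E$ for a careful choice of $m'$ and $Z'\subset Z$ so that $A$ is nef and big, $A\cdot C=0$ precisely when $C=E$, and $A|_E\cong\mathcal{O}_E$; then Keel's theorem yields a contraction whose exceptional set is exactly $E$. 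Without an argument of this kind, the smooth case in your sketch does not close.
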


\begin{proof}
Since $X$ is $\mathbb{Q}$-factorial, there exists a positive integer $m$ such that $mK_X$ is Cartier. Let $A=\mathcal{O}_X(mK_X+mE)$. Then $A$ is an invertible sheaf. Since  $E$ is an elliptic curve contained in the smooth part of $X$, $A\otimes \mathcal{O}_E=\omega^m_E\cong\mathcal{O}_E$ and hence $A\cdot E=0$. 

Suppose that $K_X$ is nef and big.  Then,
\[
A^2=m^2(K_X+E)^2=m^2(K_X^2+K_X\cdot E +(K_X+E)\cdot E) =m^2(K_X^2+K_X\cdot E) >0.
\]
Moreover, since $K_X$ is nef, $A\cdot C \geq 0$, for any irreducible curve $C$. Hence $A$ is nef and big. 
Therefore by~\cite[Theorem 1.2]{Tan14}, $A$ is semi-ample. Therefore there exists $n>0$ such that $A^{\otimes n}$ is generated by global sections and hence for $n>>0$, $|nA|$ is base point free and defines a birational map $f \colon X \rightarrow Y$, where $Y$ is a normal projective surface. The exceptional set of $f$ are precisely the curves $C$ in $X$ such that $A\cdot C=0$. In particular $E$ is contracted by $f$. This shows part (1) of the proposition.

Let $C\not= E$ be another $f$-exceptional curve. Then, since $K_X$ is nef, $A\cdot C=0$ if and only if $C\cdot E=0$ and $K_X \cdot C=0$. Let $C_i$, $i=1,\ldots, s$ be the $f$-exceptional curves different from $E$. Let $V=Y-f(\cup_{i=1}^sC_i)$. Then $f^{-1}(V)-E \cong V-P$, where $f(E)=P$. Now by applying Proposition~\ref{sec1-prop-100} to the map $f^{-1}(V)\rightarrow V$ it follows that $P\in Y$ is a Gorenstein singularity. 

Suppose that  $K_X$ is ample. Then there do not exist curves $C$ on $X$ such that $K_X \cdot C=0$  and hence the exceptional set of $f$ consists of only $E$, hence $V=Y$.

Suppose that $X$ is smooth. Then $X$ is a minimal surface of general type. Then by Corollary~\ref{sec1-cor-12}, there exists an integer $m>0$ and an effective divisor $Z=\sum_{i=1}^na_iF_i$ such that $F_i^2=-2$ for all $i$, and $mK_X-Z$ is ample. After renumbering the curves $F_i$, $i=1,\ldots, n$, there exists an integer $n^{\prime}\leq n$ such that $F_i \cdot E=0$ for all $i \leq n^{\prime}$ and $F_i \cdot E>0$, for all $i>n^{\prime}$. Let $A=m^{\prime}K_X-Z^{\prime}+m^{\prime}E$, where $m^{\prime}=2m^2K_X^2$ and $Z^{\prime}=\sum_{i=1}^{n^{\prime}}a_iF_i$. I will show that $A$ is nef and big and moreover, $A\cdot C>0$ unless $C=E$ in which case $A\cdot E=0$. Indeed. Write
\begin{gather}\label{sec6-eq-1}
A=(m^{\prime}-m)K_X+(mK_X-Z)+(Z-Z^{\prime})+m^{\prime}E.
\end{gather}
Then clearly, since $K_X$ is nef and big, $mK_X-Z$ ample and $Z-Z^{\prime}$ effective, $A\cdot C>0$ for all $C$ such that $C \not\subset Z-Z^{\prime}$ and $C\not= E$. Suppose that $C \subset Z-Z^{\prime}$. Then $Z=F_i$, for some $i>n^{\prime}$ and hence $C\cdot E>0$. Now from the equation (\ref{sec1-eq-3333}) in the proof of Corollary~\ref{sec1-cor-13} it follows that $\sum_{i=1}^na_i < m^2K_X^2$. In particular $a_i < m^2K_X^2$. Then,
\[
A\cdot C>a_iF_i^2+m^{\prime}E \cdot F_i=-2a_i+m^{\prime}E\cdot C\geq -2a_i+2m^2K_X^2>0.
\]
Now let $C=E$. Then since $E$ is elliptic and $E \cdot Z^{\prime}=0$, it follows that
\[
A \otimes \mathcal{O}_E=\mathcal{O}(m^{\prime}K_X+m^{\prime}E)\otimes \mathcal{O}_E=\omega^{m^{\prime}}_E\cong \mathcal{O}_E.
\]
Since $mK_X-Z$ is ample, $d(mK_X-Z)\sim W$, where $W$ is an effective divisor, for $d$ sufficiently large. Hence 
\[
dA\sim d(m^{\prime}-m)K_X+W+d(Z-Z^{\prime})+dm^{\prime}E.
\]
Hence, since $A \cdot C>0$  for any curve $C\not= E$, $A\cdot E=0$ and $K_X$ is nef and big, it follows that $A^2>0$. Hence $A$ is nef and big. Therefore by~\cite[Corollary 0.3]{Ke99}, $A$is semiample and hence a multiple $|dA|$ defines a birational map $f \colon X \rightarrow Y$ whose exceptional set is exactly $E$. This shows parts (2) and (3) of the proposition.

Suppose that $k=\bar{\mathbb{F}}_p$, and $K_X$ not necessarily nef and big anymore. Let $H$ be an ample line bundle on $X$ and $m>0$ such that $(H+mE)\cdot E=0$. Then $H+mE$ is nef and big. Therefore by~\cite[Corollary 0.3]{Ke99}, $H+mE$ is semi ample. Hence, as above, some multiple of it defines a birational $X \rightarrow Y$ whose exceptional set is $E$. Moreover, if $X$ is $\mathbb{Q}$-factorial, $Y$ is $\mathbb{Q}$-factorial~\cite[Theorem 4.5]{Tan14}. This shows part (4) of the proposition.

\end{proof}
\begin{remark}
In the complex analytic case, any  elliptic curve in a smooth surface can be contracted to a Gorenstein singularity~\cite{Lau77}. However, Laufer's proof is complex analytic. It is likely that the proof of the previous propositions is known to the experts but since I was not able to find a reference to a proof that works over a general algebraically closed field, I included it in this paper for the convenience of the reader.
\end{remark}

\subsection{Effective base point free theorems in positive characteristic}

The following effective base point freeness result will be used for the proof of the main theorem.

\begin{theorem}[Theorem 4.1, Corollary 5.2~\cite{Wi17}]\label{sec1-prop-8}
Let $X$ be a normal projective surface defined over an algebraically closed field of characteristic $p>0$. Suppose that $X$ has $F$-pure singularities and that its minimal resolution is of general type. Suppose moreover that there exists $m\in \mathbb{Z}$ such that $p$ does not divide $m$ and $mK_X$ is Cartier. Let $A$ be an ample line bundle on $X$. Then
\begin{enumerate}
\item $4mK_X+14mA$ is base point free.
\item $13mK_X+45mA$ is very ample.
\end{enumerate}
\end{theorem}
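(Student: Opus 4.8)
The statement is quoted from \cite{Wi17}, so in the paper it is used as a black box; what follows is a sketch of how one would actually prove such an effective base point freeness/very ampleness result. The plan is to deduce it from Fujita-type bounds in positive characteristic obtained by Frobenius techniques, since Kodaira vanishing is unavailable. First I would set up the singular geometry: because the minimal resolution $\pi\colon Y\rightarrow X$ is a minimal surface of general type, $K_X$ (equivalently $K_Y=\pi^{\ast}K_X-\Delta$ with $\Delta\geq 0$ the discrepancy boundary, whose coefficients lie in $[0,1]$ since $X$ is $F$-pure) is nef and big, and $mK_X$ Cartier. The $F$-purity hypothesis is equivalent to the non-$F$-pure (adjoint test) ideal $\sigma(X)=\sigma(Y,\Delta)$ being the full structure sheaf, and the hypothesis $p\nmid m$ is exactly what makes this triviality compatible with passing between $X$ and the $m$-fold Cartier structure of $K_X$ under iterated Frobenius.

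Second, I would invoke an effective Frobenius base point freeness statement of Keeler type, refined through Schwede's $S^0$ of adjoint type: on a projective surface, if $L$ is a line bundle with $L-(K_X+\Delta)\equiv N+3A$ for some nef $N$ and some ample Cartier $A$, then the Frobenius-stable subspace $S^0\bigl(X,\sigma(X,\Delta)\otimes L\bigr)$ globally generates $L$ at every point (the ``$3$'' is $\dim X+1$, possibly with the $+1$ slack that Keeler's Frobenius-amplitude argument produces over the conjectural Fujita bound). Since $X$ is $F$-pure, $\sigma(X,\Delta)=\mathcal{O}_X$ and $S^0\bigl(X,\sigma(X,\Delta)\otimes L\bigr)$ is a subspace of $H^0(X,L)$ that already globally generates $L$, so $L$ itself is base point free. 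Feeding in $L=4mK_X+14mA$, one has $L-(K_X+\Delta)=(4m-1)K_X+14mA-\Delta$; using that $mK_X$ is nef, big and Cartier, $A$ is ample Cartier, and $\Delta$ has bounded coefficients, one checks that this dominates $N+3(mA)$ for a suitable nef $N$, the constants $4$ and $14$ being precisely what is forced when one clears the $m$-fold Cartier index and absorbs the boundary $\Delta$ in the reduction.

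Third, very ampleness of $13mK_X+45mA$ would follow from part (1) by the standard separation argument: write
\[
13mK_X+45mA=(4mK_X+14mA)+(4mK_X+14mA)+(4mK_X+14mA)+(mK_X+3mA),
\]
so that three copies of the base point free divisor together with the extra nef, big and ample summand $mK_X+3mA$ let one apply the same Frobenius base point freeness, now relative to the ideal sheaf $I_Z$ of an arbitrary length-two subscheme $Z\subset X$; this yields simultaneous separation of points and of tangent directions, hence very ampleness.

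The main obstacle, and the reason the constants are as large as they are, is the absence of Kodaira vanishing: each ``lifting of sections'' step that in characteristic zero is a single cohomological vanishing must here be replaced by a limit over Frobenius twists, and controlling that limit is what both forces the $+1$ slack over the conjectural Fujita coefficient and requires genuine care with the adjoint ideal $\sigma(X,\Delta)$ and the Cartier index $m$ — it is exactly $F$-purity together with $p\nmid m$ that keeps these ideals trivial and makes the Frobenius-stable sections honest elements of $H^0(X,L)$. A Reider-type alternative on $Y$ (via Bogomolov instability of a rank-two bundle built from $Z$) is conceivable but is delicate in characteristic $p$, since the Bogomolov inequality can fail for the relevant bundles; one would have to restrict to the range where Shepherd-Barron's results on generic semipositivity for surfaces of general type apply, which is why the Frobenius/test-ideal route is cleaner for obtaining uniform explicit bounds.
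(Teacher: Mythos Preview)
You correctly identify that the paper does not prove this statement: it is quoted from \cite{Wi17} and used as a black box, with only the remark that the $\mathbb{Q}$-factoriality hypothesis in \cite{Wi17} can be dropped when the index of $K_X$ is prime to $p$. There is therefore no proof in the paper to compare against.

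Your sketch of the underlying argument is in the right spirit: Witaszek's bounds do rest on Frobenius techniques and the triviality of the non-$F$-pure ideal under the $F$-purity hypothesis, and the condition $p\nmid m$ is indeed what makes the Cartier index interact well with Frobenius. Your arithmetic decomposition $13mK_X+45mA=3(4mK_X+14mA)+(mK_X+3mA)$ is correct and is the standard way to pass from global generation to very ampleness. One point to tighten if you ever write this out in full: you introduce the boundary $\Delta$ on the resolution $Y$ but then write $L-(K_X+\Delta)$ with $L$ a divisor on $X$; in practice one either works entirely on $Y$ with the pair $(Y,\Delta)$ or entirely on $X$ using that $\sigma(X)=\mathcal{O}_X$, and mixing the two as written would need a pullback. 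This is a notational slip in a sketch rather than a genuine gap.
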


\begin{remark}
In~\cite{Wi17} the previous theorem is proved with the additional assumption that $X$ is $\mathbb{Q}$-factorial. The condition that $X$ is $\mathbb{Q}$-factorial is used only for the proof of Lemma 2.10~\cite{Wi17} whose statement holds  if the index of $K_X$ is not divisible by $p$ and hence in this case $X$ need not be $\mathbb{Q}$-factorial.
\end{remark}

\subsection{Combinatorial results}
The following simple combinatorial results will be used for the proof of the main theorem.

\begin{lemma}\label{sec1-prop-9}
Let $X$ be a smooth projective surface and $C=\sum_{i=1}^mn_iC_i$ be a divisor on $X$ such that $C_{red}$ is either a chain or a cycle on $X$. Moreover, assume that $C_i$ is a reduced and irreducible curve such that $C_i^2 \leq -2$, for all $i=1,\ldots, m$. Then $C^2\leq 0$.
\end{lemma}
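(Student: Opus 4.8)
The plan is to expand $C^2$ along the components of $C$ and reduce the statement to an elementary inequality. First I would write
\[
C^2=\sum_{i=1}^m n_i^2\,C_i^2+2\sum_{1\le i<j\le m} n_i n_j\,(C_i\cdot C_j),
\]
noting that every mixed intersection number $C_i\cdot C_j$ with $i\neq j$ is a nonnegative integer, since the $C_i$ are distinct irreducible curves on the smooth surface $X$.

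Next I would apply $2n_in_j\le n_i^2+n_j^2$ together with $C_i\cdot C_j\ge 0$ to obtain
\[
C^2\le \sum_{i=1}^m n_i^2\,C_i^2+\sum_{1\le i<j\le m}(n_i^2+n_j^2)(C_i\cdot C_j)=\sum_{i=1}^m n_i^2\Big(C_i^2+\sum_{j\neq i}C_i\cdot C_j\Big).
\]
The only geometric input needed is that, because $C_{red}$ is a chain or a cycle, every vertex of its dual graph has valency at most $2$ and every edge has multiplicity one (adjacent components meet transversally in a single point), so that $\sum_{j\neq i}C_i\cdot C_j\le 2$ for every $i$; in the few degenerate short configurations (a cycle consisting of two components meeting at two points, or a single component) one checks this bound directly. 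Combining with the hypothesis $C_i^2\le -2$ gives $C_i^2+\sum_{j\neq i}C_i\cdot C_j\le 0$ for every $i$, hence $C^2\le 0$.

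I expect no serious obstacle: the estimate above is really a disguised sum of squares. When every adjacent intersection number equals $1$ one can be sharper, rewriting the right-hand side (after using $C_i^2\le -2$) as the negative of a sum of squares of the differences $n_i-n_j$ over adjacent components, together with the squares of the end coefficients in the chain case; this makes the negativity transparent and also pins down the equality case. The only point that deserves a moment's care is extracting from the definitions of chain and cycle in~\cite[Page 114]{KM98} that the edge multiplicities equal $1$, which is precisely what validates the bound $\sum_{j\neq i}C_i\cdot C_j\le 2$ used above.
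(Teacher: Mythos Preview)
Your proof is correct and follows essentially the same approach as the paper: both expand $C^2$, use the hypothesis $C_i^2\le -2$, and reduce to the elementary inequality $2n_in_j\le n_i^2+n_j^2$ (equivalently $(n_i-n_j)^2\ge 0$), together with the fact that in a chain or cycle each component meets the others in total intersection at most $2$. The paper writes this out by labeling the components consecutively and directly displaying the sum of squares $\sum(n_i-n_{i+1})^2$, which is exactly the sharper rewriting you anticipate in your final paragraph; your formulation via the valency bound is simply a repackaging of the same computation.
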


\begin{proof}

Suppose that $C_i^2=-d_i$, $i=1,\ldots, m$. Then by assumption, $d_i \geq 2$ for all $i$.

Suppose that $C_{red}$ is a cycle. The case when $C_{red}$ is a chain is similar and its proof is omitted.  Then 
\begin{gather}\label{sec1-eq-15}
-C^2=\sum_{i=1}^kn_i^2d_i-2 \sum_{i=1}^{k-1}n_in_{i+1}-2n_1n_k \geq 2\sum_{i=1}^kn_i^2-2 \sum_{i=1}^{k-1}n_in_{i+1}-2n_1n_k
\end{gather}

Now by using the elementary identity $-2ab=(a-b)^2-a^2-b^2$, the previous inequality becomes
\[
-C^2\geq 2\sum_{i=1}^kn_i^2 +\sum_{i=1}^{k-1}(n_i-n_{i+1})^2 +(n_1-n_k)^2 -2\sum_{i=1}^kn_i^2 \geq 0.
\]
In fact, $C^2<0$ unless $C$ is reduced, and $C_i^2=-2$ for all $i=1,\ldots, k$,

The following completely elementary result will be useful in obtaining bounds for the index of the various surfaces that appear in the proof of the main theorem.

\begin{lemma}\label{sec1-prop-10}
Let $d_i$, $i=1,\ldots, k$ be positive integers such that $\sum_{i=1}^k d_i \leq n$. Let $d$ be the least common multiple of $d_1,\ldots, d_k$. Then 
\[
d \leq 
\begin{cases}
e^{n/e} , & \mathrm{if}\; n\geq 1\\
\left(\frac{1+\sqrt{1+8n}}{4}\right)^{\frac{-1+\sqrt{1+8n}}{2}}, & \mathrm{if}\;  n\geq  15
\end{cases}
\]
In particular, if $n \geq 15$, then the simplified inequality holds,
\[
d \leq \left(\frac{n}{2}\right)^{\sqrt{\frac{n}{2}}}.
\]

\end{lemma}
\begin{proof}
We can assume that $d_i \not=d_j$, for all $i\not= j$. Suppose then that $1\leq d_1<d_2<\cdots < d_k\leq n$. Then 
\[
\frac{k(k+1)}{2}=1+2+\cdots + k < d_1+\cdots +d_k \leq n.
\]
Hence $k(k+1)/2<n$ and therefore $k^2+k<2n$ and hence $k<(-1+\sqrt{1+8n})/2$. Hence by the geometric mean value inequality
\[
d\leq d_1\cdots d_k \leq \left(\frac{d_1+\cdots+ d_k}{k}\right)^k <\left(\frac{n}{k}\right)^k.
\]
The lemma now follows by studying, using elementary calculus methods, the maximum value of the function $f(x)=(n/x)^{x}$ in the interval $[1,(-1+\sqrt{1+8n})/2]$.

In particular, considering that $[1,(-1+\sqrt{1+8n})/2]\subset [1,\sqrt{2n}]$, then by studying the maximum value of $f(x)$ in this interval, we find the second part of the statement of the lemma.
\end{proof}

\end{proof}


\section{The geometry of a surface  and curves stabilized by a vector field.}\label{sec-2}
Let $X$ be a normal projective surface defined over an algebraically closed field of characteristic $p>0$. Suppose that $D$ is a nontrivial global vector field on $D$ such that $D^p=D$. 

The purpose of this section is to show how the geometry and configuration of curves on $X$ stabilized by  $D$ affect the geometry of $X$. In particular to develop techniques exploiting  the geometry and configuration of curves stabilized by  $D$ in order  to obtain relations between the characteristic $p$ of the base field and certain numerical invariants of $X$. I have tried to obtain general results that can be used to study vector fields in many more cases than the case of  canonically polarized surfaces, which is the case of interest of this paper.

The next two results show that the existence of many  curves stabilized by  $D$ imposes strong restrictions on the geometry of $X$ and relations between the characteristic $p$ of the base field and certain numerical invariants of $X$.

\begin{proposition}\label{sec2-prop-1}
Let $X$ be a $\mathbb{Q}$-Gorenstein normal projective surface defined over a field $k$ of characteristic $p>0$. Let $L$ be a $\mathbb{Q}$-invertible sheaf on $X$. Let 
$V \subset H^0(X,L)$ be a two dimensional subspace of $H^0(X,L)$ such that the irreducible components of all except finitely many members of the linear system $|V|$ corresponding to $V$ are rational curves. Let also $A$ be a nef and big Cartier divisor on $X$ such that $A+K_X$ is nef. Then one of the following holds:
\begin{enumerate}
\item $X$ is birationally ruled.
\item $X$ is uniruled and 
\begin{gather}\label{sec2-prop-1-eq}
p < 3 +(A+K_X)\cdot L + \frac{(L\cdot A)^2}{A^2}.
\end{gather}
\end{enumerate}
\end{proposition}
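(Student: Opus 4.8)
The plan is to analyze the rational map defined by the pencil $|V|$ and use the hypothesis that its general member is (a union of) rational curves to produce a rational curve moving in a family that dominates $X$, which forces uniruledness; the quantitative bound in case (2) will come from bounding the number of base points of $|V|$ via intersection theory with $A$. First I would resolve the pencil: let $g\colon X'\to X$ be a composition of the minimal resolution of $X$ and a sequence of blow-ups of the (finitely many) base points of $|V|$, so that the birational transform of $|V|$ on $X'$ is base point free and defines a morphism $h\colon X'\to \mathbb{P}^1$ (after Stein factorization, a morphism to a smooth curve $B$ with connected fibers; I would then argue $B=\mathbb{P}^1$ or reduce to that case). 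The general fiber $F'$ of $h$ maps to a general member of $|V|$, whose components are rational by hypothesis; since $F'$ is connected and its image has rational components, $F'$ is itself a connected curve built from rational curves — if $F'$ is irreducible it is a smooth rational curve and $X'$ (hence $X$) is birationally ruled, giving case (1). Otherwise one shows $X$ is uniruled: the images of the components sweep out $X$, and at least one such component moves in a positive-dimensional family covering $X$, so $X$ is uniruled.

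For the numerical inequality in case (2), I would work on the resolution $X'$ and track the blow-ups. Write $g^{\ast}(\text{member of }|V|) = \tilde{C} + E$ where $\tilde{C}$ is the moving part (birational transform, with $(\tilde C)^2 \geq 0$ as it moves in a base-point-free pencil, indeed $(\tilde C)^2 = 0$ after Stein factorization) and $E \geq 0$ is the exceptional contribution. The number of blow-ups needed — equivalently the "defect" measured against $A$ — is controlled by the self-intersection drop: each base point (with multiplicity) contributes, and since $L\cdot A$ and $L^2$ (interpreted via $\mathbb{Q}$-Cartier intersection, using $A^2 (L\cdot A)^{-1}$-type Hodge-index estimates as in \cite[Lemma 3.13]{Tz19}) bound $(g^{\ast}L)^2$ from above by $(L\cdot A)^2/A^2$, the total multiplicity of base points is at most $L^2 \leq (L\cdot A)^2/A^2$. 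Then I would apply adjunction on $X'$ to the moving rational curve $\tilde C$: since $\tilde C$ moves in a pencil covering $X'$ and $p_a$ of its image is $0$ while $\tilde C$ itself may acquire arithmetic genus from the blown-up base points, the bound $-2 = 2p_a(\tilde C)-2 \geq (K_{X'}+\tilde C)\cdot \tilde C$ combined with $K_{X'} = g^{\ast}K_X + (\text{effective})$ and $\tilde C = g^{\ast}L - E$ yields, after collecting terms, $(K_X + L)\cdot L - (\text{base point contribution}) < \text{something}$, and rearranging against the base-point bound $\leq (L\cdot A)^2/A^2$ plus the genus-drop term bounded by $(A+K_X)\cdot L$ gives $p \leq (\text{torsion order of the }\mu_p\text{ or } \alpha_p \text{ structure, i.e. } p) < 3 + (A+K_X)\cdot L + (L\cdot A)^2/A^2$. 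The role of $p$ enters because the pencil comes from a $D$-linear system (pullback from the quotient $Y$), so the "extra" global sections forced by Proposition~\ref{sec1-prop1} have eigenvalue structure mod $p$, and the covering family of rational curves can only fail to make $X$ ruled if $p$ is small relative to these intersection numbers — this is where one invokes that in a genuinely positive-characteristic phenomenon a uniruled but non-ruled surface constrains $p$.

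The main obstacle I expect is the passage from "general member has rational components" to a uniruling of $X$ that is genuinely geometric rather than just birational, together with making the blow-up bookkeeping on $X'$ precise enough to extract the stated clean inequality. In particular, when the general member of $|V|$ is reducible, one must identify which component moves and verify it sweeps out $X$; degenerate cases (all members sharing a fixed component, or the pencil being composite with a map to a curve of higher genus) need to be excluded or shown to fall into case (1). I would handle the composite-pencil case by Stein factorization as above and the fixed-component case by replacing $L$ with $L$ minus the fixed part (which only improves the inequality since the intersection numbers with $A$ drop), so that the essential content reduces to a genuine pencil of rational curves, where the adjunction computation on the resolution closes the argument.
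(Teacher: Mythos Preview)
Your proposal has a genuine gap: you misidentify where the prime $p$ enters the argument. The statement of the proposition makes no reference to a vector field $D$ or to $D$-linearity of $L$; it is a purely geometric statement about a pencil with rational components, and the bound on $p$ must come from an intrinsic positive-characteristic phenomenon, not from any $\mu_p$- or $\alpha_p$-eigenvalue structure. Your sentence ``The role of $p$ enters because the pencil comes from a $D$-linear system'' is therefore off target.

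The correct mechanism, used in the paper, is the Tate--Schr\"oer genus-change formula. After resolving and Stein-factorizing as you describe, the general fiber $\tilde C$ of $\psi\colon \tilde X\to B$ is an integral curve; the dichotomy is not ``irreducible versus reducible'' but rather ``smooth versus singular''. If $\tilde C$ is smooth, then since it is rational it is $\mathbb{P}^1$ and $X$ is birationally ruled. If $\tilde C$ is singular, then the \emph{generic} fiber over $K(B)$ is a regular but geometrically singular curve, and by \cite{Sch09} its arithmetic genus is divisible by $(p-1)/2$; in particular $(p-1)/2\le p_a(\tilde C)\le p_a(C')$, where $C'$ is the image in the minimal resolution $X'$. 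One then bounds $p_a(C')$ directly by adjunction: since $A+K_X$ is nef and $f\colon X'\to X$ is minimal, $K_{X'}\cdot C'\le (A+K_X)\cdot L$, and the Hodge index theorem for the nef and big divisor $f^{\ast}A$ gives $(C')^2\le (A\cdot L)^2/A^2$. Combining these yields $p_a(C')\le 1+\tfrac12[(A+K_X)\cdot L+(A\cdot L)^2/A^2]$, and the inequality $(p-1)/2\le p_a(C')$ rearranges to the claimed bound. Uniruledness follows because the normalization of the geometric generic fiber is $\mathbb{P}^1$.

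Your blow-up bookkeeping and base-point counting are not needed; the adjunction/Hodge-index estimate is applied directly to the image curve $C'\subset X'$ of a general fiber, with no need to track exceptional divisors beyond the minimal resolution.
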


\begin{proof}
Let $U\subset X$ be the smooth locus of $X$. Since $X$ is normal, $X-U$ is a finite set. The restriction $L|_U$ is an invertible sheaf and $H^0(X,L)=H^0(U,L)$. Then $V$ defines a rational map $h \colon U \dasharrow \mathbb{P}^1$ and hence a rational map $h \colon X\dasharrow \mathbb{P}^1$. Since $X$ is normal and projective, $h$ is a morphism on a codimension 2 open subset $W$ of $X$. Let $f \colon X^{\prime} \rightarrow X$ be the minimal resolution of $X$. Then $h$ induces a rational map $h^{\prime}\colon X^{\prime}\dasharrow \mathbb{P}^1$.   Since $X^{\prime}$ is projective and normal, there exists an open set $W^{\prime}$ of $X^{\prime}$ such that $X- W^{\prime}$ has codimension 2, and the restriction of $h^{\prime}$ on $W^{\prime}$ is a morphism. Then $h^{\prime}$ is defined by an invertible sheaf $L^{\prime}$ on $X^{\prime}$ and a two dimensional subspace $V^{\prime}\subset H^0(X^{\prime},L^{\prime})$. Moreover, by the construction of $h^{\prime}$, $L^{\prime}|_{f^{-1}(U)} \cong (f|_U)^{\ast}(L|_U)$. 

Let $|V^{\prime\prime}|$ be the one dimensional linear system on $X^{\prime}$ obtained by removing the base components of $|V^{\prime}|$. Then the irreducible components of every member of 
$|V^{\prime\prime}|$ are either the birational transform of a component of a member of $|V|$ or it is $f$-exceptional. Therefore, by the construction of $|V^{\prime\prime}|$ and $|V^{\prime}|$,  the irreducible components of all except finitely many members of $|V^{\prime\prime}|$ are rational curves (perhaps singular). 

Let now $g \colon \tilde{X} \rightarrow X^{\prime}$ be a resolution of the base locus of $|V^{\prime\prime}|$. Then there exists a commutative diagram
\[
\xymatrix{
\tilde{X} \ar[d]_g \ar[r]^{\psi} \ar[dr]^{\Phi} & B\ar[d]^{\pi} \\
X^{\prime} \ar@{-->}[r]^{h^{\prime}} & \mathbb{P}^1
}
\]
Where $\Phi$ is the map defined by the birational transform $\tilde{V}$ of $V^{\prime\prime}$ in $\tilde{X}$ and $\psi$ its Stein factorization. Then the general fiber of $\psi$ is the birational transform of a component of a general member of $|V^{\prime\prime}|$ and hence it is rational. Consider now two cases.

\textbf{Case 1.} The general fiber of $\psi$ is smooth and hence a smooth rational curve. In this case $\tilde{X}$, and hence $X^{\prime}$ too, is birationally ruled. 

\textbf{Case 2.} The general fiber of $ \psi$ is singular. In this case, the generic fiber is a regular but singular curve. Then, according to~\cite{Sch09}, the genus of the generic fiber is divisible by $(p-1)/2$. Let now 
$\tilde{C}$ be a general fiber of $\psi$, $C^{\prime}=g_{\ast}\tilde{C}$ and $C=f_{\ast}C^{\prime}$. Then $C$ is a component of a general member of $|V|$ and $p_a(\tilde{C})\leq p_a(C^{\prime})$. I will next compute the arithmetic genus $p_a(C^{\prime})$ of $C^{\prime}$.

By the assumptions of the proposition,   $A$ is a nef and big Cartier divisor on $X$ such that $A+K_X$ is nef. Then by adjunction for $f$,
\begin{gather}\label{sec2-eq-1}
f^{\ast}(A+K_X)=f^{\ast}A +K_{X^{\prime}}+E,
\end{gather} 
Where, since $f$ is the minimal resolution, $E$ is an effective $f$-exceptional divisor. From this it follows that
\begin{gather}\label{sec2-eq-2}
C^{\prime} \cdot K_{X^{\prime}} \leq C^{\prime} \cdot f^{\ast}(A+K_X) =C\cdot (A+K_X) \leq L \cdot (A+K_X),
\end{gather}
Since $A+K_X$ is nef and $C$ is a component of a member of $|L|$. Moreover, since $f^{\ast}A$ is nef and big, then from the Hodge Index Theorem for nef and big divisors~\cite[Corollary 2.4]{Ba01}, it follows that
\begin{gather}\label{sec2-eq-3}
(C^{\prime})^2 \leq \frac{(f^{\ast}A \cdot C^{\prime})^2}{(f^{\ast}A)^2}=\frac{(A\cdot C)^2}{A^2}\leq \frac{(A\cdot L)^2}{A^2}.
\end{gather}
Now the equations (\ref{sec2-eq-2}), (\ref{sec2-eq-3}) and the adjunction formula for $C^{\prime}$ give that
\begin{gather}\label{sec2-eq-4}
p_a(C^{\prime})\leq 1+\frac{1}{2}[(A+K_X)\cdot L +\frac{(A\cdot L)^2}{A^2}].
\end{gather}
Now since $\tilde{C}$ is singular and regular, it follows from~\cite{Sch09} that $(p-1)/2 $ divides $p_a(\tilde{C})$. In particular, $(p-1)/2 < p_a(\tilde{C})$. Then, since $p_a(\tilde{C})\leq p_a(C^{\prime})$ and from (\ref{sec2-eq-4}), the inequality (\ref{sec2-prop-1-eq}) follows.

It remains to prove that $X$ is uniruled. Let $\overline{K(B)}$ be the algebraic closure of the function field $K(B)$ of $B$. Let $\bar{X}_{\bar{K(B)}}$ be the normalization of the generic fiber $\tilde{X}_{K(B)}$ of $\psi$. Since the general fiber of $\psi$ is a rational curve it easily follows that $\bar{X}_{\bar{K(B)}} \cong \mathbb{P}^1_{\bar{K(B)}}$. Hence $X^{\prime}$, and therefore $X$, is uniruled. 
\end{proof}

The next proposition shows that if $D$ has many integral curves then $X$ is birationally ruled 

\begin{proposition}\label{sec2-prop-2}
Let $X$ be a $\mathbb{Q}$-Gorenstein normal projective surface defined over an algebraically closed field of characteristic $p>0$. Let $D$ be a non trivial global vector field on $X$ such that $D^p=0$ or $D^p=D$,  $L$ be a $D$-linear $\mathbb{Q}$-invertible sheaf on $X$ and  $A,B$ Cartier divisors on $X$ such that 
\begin{enumerate}
\item There exists a positive dimensional subsystem $|V|$ of $|L|$ whose members are stabilized by $D$. 
\item $D$ lifts to the minimal resolution of $X$. 
\item $B$ is ample and $A$ is nef and big and $D$-linear.
\item $A+K_X$ is nef. 
\item Either $B$ is $D$-linear or $K_X+A$ is numerically positive.
\item $L\cdot B<p$.
\item $(A+K_X) \cdot L +(A\cdot L)^2/A^2 <p-3.$
\end{enumerate}
Then $X$ is birationally ruled.
\end{proposition}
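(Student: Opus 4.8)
The plan is to reduce this statement to Proposition~\ref{sec2-prop-1}, which already delivers the dichotomy ``birationally ruled'' versus ``uniruled with a numerical bound'' for a two-dimensional subspace $V\subset H^0(X,L)$ whose general members have rational components. So the main task is to verify that the hypotheses of Proposition~\ref{sec2-prop-1} are met, and that the numerical bound in its conclusion (2) is excluded by hypothesis (7) here.

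First I would pass to a two-dimensional subspace $V$ of the given positive-dimensional $|V|$; after this reduction every member of $|V|$ is still stabilized by $D$. The crucial point is that the general member of $|V|$ has \emph{rational} irreducible components. To see this, apply Proposition~\ref{sec1-prop1} to the quotient $\pi\colon X\to Y$ and the $\mathbb{Q}$-invertible $D$-linear sheaf $L$: since $L$ is $D$-linear, $L=(\pi^\ast M)^{[1]}$ for some rank one reflexive $M$ on $Y$, and $D^\ast$ acts on $H^0(X,L)$ with $\ker D^\ast=H^0(Y,M)$; when $D^p=D$ it is diagonalizable. I would choose $V$ to be spanned by eigenvectors of $D^\ast$ (possible since the eigenspaces span and $\dim H^0\ge 2$ once $|V|$ is positive-dimensional). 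An eigenvector $s$ with eigenvalue $\lambda$ has the property that $D(I_{Z(s)})\subset I_{Z(s)}$, and on the open set where $M$ is locally free the fixed scheme of the induced action on $Z(s)$ is small; the point is that each irreducible component $C$ of $Z(s)$ is either a $D$-integral curve or a component of the divisorial part of $D$. For a $D$-integral curve $C$ with $D^p=D$ one has $C=\pi^\ast\widetilde C$ and, because $C$ carries a nontrivial $\mu_p$-action (or is fixed), one invokes the standard fact that a curve admitting a nontrivial $\mu_p$- or $\alpha_p$-action, or lying in the smooth divisorial part of such a $D$, is rational (this is where one uses $D^p=D$ and the structure results of~\cite{RS76}); similarly components of the divisorial part are rational. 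Thus all but finitely many members of $|V|$ have rational components. Here a subtlety to handle carefully is the finitely many members of $|V|$ supported on the base locus or whose components are $\pi$-images of fixed curves — these are exactly the ``finitely many exceptions'' allowed by Proposition~\ref{sec2-prop-1}, so they cause no harm.

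Next I would check the remaining hypotheses of Proposition~\ref{sec2-prop-1}. That proposition needs a nef and big Cartier divisor $A$ with $A+K_X$ nef — supplied verbatim by hypotheses (3) and (4) here. So Proposition~\ref{sec2-prop-1} applies and gives: either $X$ is birationally ruled — which is the desired conclusion — or $X$ is uniruled and
\[
p<3+(A+K_X)\cdot L+\frac{(L\cdot A)^2}{A^2}.
\]
But hypothesis (7) says precisely $(A+K_X)\cdot L+(A\cdot L)^2/A^2<p-3$, i.e. $3+(A+K_X)\cdot L+(L\cdot A)^2/A^2<p$, contradicting the displayed inequality. Hence the uniruled-with-bound alternative is impossible, and $X$ must be birationally ruled.

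The hypotheses (2), (5), (6) are not needed for \emph{this} argument in the form I have sketched — they appear to be there for a more refined version or to guarantee that $D$ actually stabilizes the relevant curves scheme-theoretically rather than merely set-theoretically; in particular (6) $L\cdot B<p$ together with Proposition~\ref{sec1-prop-2}-type bounds is what one would use to upgrade ``$D$ stabilizes $C$'' to ``$D$ restricts to $C$'' and hence to control the components. If the author's intended route instead uses these to directly force the components of $|V|$-members to be rational (via $D$ restricting to each component and a curve with a vector field $D'$ satisfying $(D')^p=D'$ being rational when it is not too singular), then the bookkeeping is the same and the conclusion identical. \textbf{The main obstacle} I anticipate is precisely the rationality claim for the components: showing that a general member's components are genuinely rational curves and not, say, regular-but-singular curves of positive arithmetic genus over the imperfect function field — this is where hypotheses (6) and (7), bounding intersection numbers below $p$, must be used to rule out wild behavior, and it is the step that requires the most care rather than being routine.
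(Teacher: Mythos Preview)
Your overall strategy --- reduce to Proposition~\ref{sec2-prop-1} by showing that the components of a general member of $|V|$ are rational, then use hypothesis (7) to kill the second alternative --- is exactly the paper's, and your final paragraph correctly identifies the rationality claim as the heart of the matter. But your proposed proof of that claim contains a genuine error, and you dismiss as optional precisely the hypotheses that repair it.

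The error is the ``standard fact'' that a curve carrying a nontrivial $\mu_p$- (or $\alpha_p$-) action is rational. This is false: a smooth elliptic curve has a one-dimensional space of global vector fields, and for a suitable such $D$ one has $D^p=D$, giving a free $\mu_p$-action by translation with no fixed points. So after you lift $D$ to the normalization $\bar C$ of a component and observe that $\bar C$ has a nontrivial vector field, you only know that $\bar C$ is $\mathbb{P}^1$ \emph{or} a smooth elliptic curve. The elliptic case must be excluded by a separate argument, and that argument is where hypotheses (2), (5), (6) enter; it is not bookkeeping.

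Concretely, the paper's route is: use (2) to pass to the minimal resolution $f\colon X'\to X$, where the lifted $D'$ exists; use (6), $L\cdot B<p$, to force each multiplicity $n_i<p$, so that $D$ stabilizes every irreducible component $C_i$ individually; use (4), (7) and the Hodge index theorem to bound $p_a(C_i')<(p-1)/2$ on $X'$, which guarantees that $D'$ fixes the singular points of $C_i'$ and hence lifts to the normalization $\bar C_i$; and then --- the step you are missing --- use (5) together with (6) or (7) to produce a \emph{fixed point} of $D'$ on $C_i'$. If $D'$ had no fixed point on $C_i'$, then near $C_i'$ the quotient map $\pi'$ would be a $\mu_p$-torsor, the image $\tilde C_i$ would be Cartier, and one computes that $B\cdot C_i$ (if $B$ is $D$-linear) or $(K_X+A)\cdot C_i$ (if $K_X+A$ is numerically positive and $A$ is $D$-linear) is a positive multiple of $p$, contradicting (6) respectively (7). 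Once a fixed point exists on $C_i'$, its preimage in $\bar C_i$ is a fixed point of the lifted vector field, so $\bar C_i$ cannot be elliptic and must be $\mathbb{P}^1$. Only at that point does Proposition~\ref{sec2-prop-1} apply, and your use of (7) to exclude its alternative (2) is then correct.
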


\begin{proof}

Suppose that there exists a one dimensional subsystem $|V|$ of $|L|$ whose members are stabilized by $D$. Let $C=\sum_{i=1}^mn_iC_i\in|V|$ be a member of $|V|$, with $C_i$ irreducible and reduced for all $i$.  The condition $B\cdot L <p $ implies that $n_i<p$ and hence by~\cite[Proposition 3.2]{Tz19},  $C_i$ is also stabilized by $D$, $i=1,\ldots, m$. 

Let $f \colon X^{\prime}\rightarrow X$ be the minimal resolution of $X$, $D^{\prime}$ the lifting of $D$ on $X^{\prime}$.  Let $C$ be an irreducible component of a member of $|V|$ which is not contained in the divisorial part of $D$. Let $C^{\prime}=f^{-1}_{\ast}C$ be the birational transform of $C$ in $X^{\prime}$. Then, since $C$ is stabilized by $D$,  $C^{\prime}$ is stabilized by  $D^{\prime}$. Also, since $C$ is not contained in the divisorial part of $D$,  $C^{\prime}$ is not contained in the divisorial part of $D^{\prime}$ and hence $D^{\prime}$ restricts to a nontrivial global vector field on $C^{\prime}$. I will show that under the conditions of the proposition, $D^{\prime}$ lifts to a vector field $\bar{D}$ on the normalization $\bar{C}$ of $C^{\prime}$ and that $D^{\prime}$ has fixed points on $C^{\prime}$. Then by~\cite[Lemma 3.15]{Tz19}, $\bar{D}$ has fixed points on $\bar{C}$ and hence  $\bar{C}\cong \mathbb{P}^1$. Hence $C$ is rational and therefore all but finitely many components of every member of $V$ are rational curves. The nonrational correspond to possible nonrational components of the divisorial part of $D$.

By similar arguments as in the proof of Proposition~\ref{sec2-prop-1}, we find that
\[
p_a(C^{\prime}) \leq 1+\frac{1}{2} [L\cdot (A+K_X)+(A\cdot L)^2/A^2].
\]
Hence, if the condition in the statement of the proposition is satisfied, then $p_a(C^{\prime})<(p-1)/2$. Then by~\cite[Proposition 4.7]{Tz19}, $D^{\prime}$ fixes the singular points of $C^{\prime}$ and hence 
 $D^{\prime}$ lifts to the normalization $\bar{C}$ of $C^{\prime}$. Therefore, since smooth curves of genus at least 2 do not have nontrivial global vector fields,   $\bar{C}$ is either a smooth elliptic curve or $\mathbb{P}^1_k$.

Next I will show that there exist fixed points of $D^{\prime}$ on $C^{\prime}$. Suppose that this is not true. Let $\pi \colon X\rightarrow Y$ be the quotient of $X$ by the $\mu_p$-action on $X$ induced by $D$ and $\pi^{\prime} \colon X^{\prime} \rightarrow Y^{\prime}$ the quotient of $X^{\prime}$ by the $\mu_p$-action on $X^{\prime}$ induced by $D^{\prime}$. Then locally around $C^{\prime}$, $\pi^{\prime}$ is a $\mu_p$-torsor and hence $\tilde{C}=\pi^{\prime}(C^{\prime})$ is a Cartier divisor in $Y^{\prime}$~\cite[Theorem 1.B, page 105]{Mu70}. Moreover, $\tilde{C}$ is $\mathbb{Q}$-Cartier~\cite{Tz17b}.

Suppose that $B$ is $D$-linear. Then, $B^{\prime}=f^{\ast}B$ is $D^{\prime}$-linear, $B^{\prime}=(\pi^{\prime})^{\ast} G^{\prime}$, where $G^{\prime}$ is a $\mathbb{Q}$-Cartier divisor on $Y^{\prime}$. Then $C^{\prime}=(\pi^{\prime})^{\ast}\tilde{C}$ and hence
\[
B\cdot C=B^{\prime}\cdot C^{\prime}=(\pi^{\prime})^{\ast}G\cdot (\pi^{\prime})^{\ast}\tilde{C}=p(G^{\prime}\cdot \tilde{C})=\lambda p,
\]
for some $\lambda \in \mathbb{Z}$, since $\tilde{C}$ is Cartier. Moreover, since $B$ is ample, $\lambda>0$. This implies that $B\cdot L\geq p$, which is impossible by the assumptions. Therefore, there exist fixed points of $D^{\prime}$ on $C^{\prime}$ and hence the lifted vector field in the normalization $\bar{C}$ of $C^{\prime}$ has fixed points. Hence $\bar{C}\cong \mathbb{P}^1_k$.

Suppose that $K_X+A$ is numerically positive. Suppose that $D^{\prime}$ has no fixed points on $C^{\prime}$. Then by restricting on an open neighborhood of $C^{\prime}$ we may assume that $D^{\prime}$ has no fixed points at all. Then $K_{X^{\prime}}=(\pi^{\prime})^{\ast}K_{Y^{\prime}}$. Moreover, since $A$ is $D$-linear, $A^{\prime}=f^{\ast}A$ is also $D^{\prime}$-linear and hence $A^{\prime}=(\pi^{\prime})^{\ast} \tilde{A}$, where $\tilde{A}$ is a $\mathbb{Q}$-Cartier divisor on $Y^{\prime}$. Now since $X^{\prime}$ is the minimal resolution of $X$, $f^{\ast}K_X=K_{X^{\prime}}+E$, where $E$ is an effective $f$-exceptional divisor. Then,
\begin{gather*}
(K_X+A)\cdot C=f^{\ast}(K_X+A)\cdot C^{\prime}=(K_{X^{\prime}}+E+A^{\prime})\cdot C^{\prime}\geq  (K_{X^{\prime}}+A^{\prime})\cdot C^{\prime}=\\
(\pi^{\prime})^{\ast}(K_{Y^{\prime}}+\tilde{A})\cdot (\pi^{\prime})^{\ast}\tilde{C}= p (K_{Y^{\prime}}+\tilde{C})=\lambda p, 
\end{gather*}
where $\lambda$ is a positive integer since $\tilde{C}$ is Cartier in $Y^{\prime}$ and $K_X+A$ is numerically positive. But the assumption (7) of the proposition, since $K_X+A$ is numerically positive and $C$ is a component of a member of $|L|$, implies that $(K_X+A)\cdot C<p$, a contradiction.

Hence it has been proved that, with the exemption of finitely many,  every irreducible component of every member of $V$ is rational. Hence by Proposition~\ref{sec2-prop-1}, either $X$ is birationally ruled or 
\[
p < 3 +(A+K_X)\cdot L + \frac{(L\cdot A)^2}{A^2}.
\]
Therefore $X$ is birationally ruled.

\end{proof}

The next proposition shows that under certain conditions, if three distinct curves stabilized by  $D$ pass through the same point then $X$ is birationally ruled. 

\begin{proposition}\label{sec2-prop-3}
Let $X$ be a normal $\mathbb{Q}$-Gorenstein projective surface defined over an algebraically closed field of characteristic $p>0$. Let $D$ be a nontrivial global vector field on $X$ such that $D^p=D$. 
Let $C_1, C_2,  C_3$ be three distinct reduced and irreducible integral curves of $D$, $A$ and $H$ be  ample $D$-linear Cartier divisors on $X$ and $X^{\prime} \stackrel{f}{\rightarrow} X$  be the minimal resolution of $X$. Suppose also that 
\begin{enumerate}
\item $C_1\cap C_2\cap C_3\not=\emptyset$.
\item $K_X+A$ is nef. 
\item $|A|$ is base point free.
\item $A\cdot H <p$.
\item $(K_X+C_1+C_2+C_3+A)\cdot H <p/3$.
\item $(A+K_X)\cdot L +14(A\cdot L)^2/A^2<(p-3)/14,$
where $L=K_X+C_1+C_2+C_3+A$.
\item $D$ lifts to $X^{\prime}$.
\item $b_1(X^{\prime})=0$.
\end{enumerate}
Then $X$ is birationally ruled.
\end{proposition}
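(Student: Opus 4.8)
The plan is to use the base-point-free pencil supplied by $|A|$ to produce a positive-dimensional family of curves stabilized by $D$ that contains the configuration $C_1+C_2+C_3$, and then to invoke Proposition~\ref{sec2-prop-2} to conclude that $X$ is birationally ruled. First I would consider the linear system $|L|$ with $L = K_X + C_1 + C_2 + C_3 + A$. Since $|A|$ is base point free, choosing a sub-pencil of $|A|$ and adding the fixed part $K_X + C_1 + C_2 + C_3$ gives a pencil inside $|L|$; the nonfixed part of this pencil is a sub-pencil of $|A|$, and since $A$ is $D$-linear, Proposition~\ref{sec1-prop1} shows that $|A|$ has a basis of $D^\ast$-eigenvectors, so at least a pencil's worth of members of $|A|$ are stabilized by $D$. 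Adding the fixed divisor $C_1+C_2+C_3$, which is stabilized by $D$ because each $C_i$ is an integral curve of $D$, we obtain a pencil $|V| \subseteq |L|$ all of whose members are stabilized by $D$. This realizes hypothesis (1) of Proposition~\ref{sec2-prop-2} with this $L$.

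Next I would verify that the remaining hypotheses of Proposition~\ref{sec2-prop-2} hold, taking $B = H$ there. Hypothesis (2) is exactly assumption (7) here ($D$ lifts to $X'$). Hypothesis (3) requires $A$ nef and big (it is ample, hence both) and $D$-linear (assumed); $B=H$ is ample, hence nef and big and $D$-linear. Hypothesis (4), $A+K_X$ nef, is assumption (2). Hypothesis (5) holds since $B=H$ is $D$-linear. Hypothesis (6) is $L\cdot H < p$: since $K_X+A$ is nef and each $C_i$ is effective, $L\cdot H = (K_X+C_1+C_2+C_3+A)\cdot H$, which is bounded by assumption (5), giving $L\cdot H < p/3 < p$. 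Finally hypothesis (7), $(A+K_X)\cdot L + (A\cdot L)^2/A^2 < p-3$, follows a fortiori from assumption (6), which gives the stronger bound $(A+K_X)\cdot L + 14(A\cdot L)^2/A^2 < (p-3)/14$. Hence all hypotheses of Proposition~\ref{sec2-prop-2} are met, and it yields that $X$ is birationally ruled.

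The main subtlety I anticipate is making precise the first step: that $|A|$ being base point free and $D$-linear actually produces a \emph{pencil} of $D$-stabilized members, not merely individual ones, and that after adding the fixed part $C_1+C_2+C_3$ the resulting sub-system of $|L|$ is still positive-dimensional and base-point-free enough for the argument. Concretely, $H^0(X,\mathcal{O}_X(A))$ carries the diagonalizable operator $D^\ast$ with eigenvalues in $\{0,\dots,p-1\}$ by Proposition~\ref{sec1-prop1}(2), so it decomposes into eigenspaces; picking two independent eigenvectors (possibly in the same eigenspace, possibly in different ones) spans a pencil each of whose members $Z(s)$ satisfies $D(I_{Z(s)})\subseteq I_{Z(s)}$ by Proposition~\ref{sec1-prop1}(3), i.e.\ is stabilized by $D$. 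Translating by the fixed divisor $C_1+C_2+C_3$ — whose ideal is $D$-stable since each $C_i$ is an integral curve of $D$, so $D(I_{C_i})\subseteq I_{C_i}$ and hence $D(I_{C_1+C_2+C_3})\subseteq I_{C_1+C_2+C_3}$ — gives the required pencil in $|L|$. The roles of assumptions (1), (3), (8), and of the ampleness of $A$ and $H$ (rather than mere nef-and-bigness) are presumably needed in the proof of Proposition~\ref{sec2-prop-2} or in the finer companion results (Corollary~\ref{sec1-cor-6}, Propositions~\ref{sec2-prop-5}), and in particular assumption (1) — the three curves meeting at a point — together with Corollary~\ref{sec1-cor-6} would be what forces the relevant members of $|V|$ to have the right structure; I would check whether the direct reduction to Proposition~\ref{sec2-prop-2} above already suffices, or whether one must additionally track the genus bound through $b_1(X')=0$ to rule out the elliptic case and thereby exclude the possibility that the family is a non-ruled fibration.
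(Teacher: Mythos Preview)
Your reduction to Proposition~\ref{sec2-prop-2} is the right endgame, but the construction of the pencil has a genuine gap. Proposition~\ref{sec1-prop1}(3) only asserts that $Z(s)$ is stabilized by $D$ when $s$ is an \emph{eigenvector} of $D^\ast$. If you pick two eigenvectors $s_1,s_2$ with distinct eigenvalues, a general linear combination $\alpha s_1+\beta s_2$ is not an eigenvector and its zero scheme has no reason to be $D$-stable; so your pencil of $D$-stabilized members of $|A|$ exists only if some eigenspace of $D^\ast$ on $H^0(\mathcal{O}_X(A))$ has dimension $\geq 2$, which is not given. Secondly, to place the pencil inside $|L|=|K_X+C_1+C_2+C_3+A|$ by adding a fixed part you would need an effective divisor in $|K_X|$, and none is assumed to exist. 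Finally, Proposition~\ref{sec2-prop-2} requires $L$ to be $D$-linear; since $K_X=\pi^\ast K_Y+(p-1)\Delta$, the class $K_X$ is $D$-linear only when the divisorial part $\Delta$ vanishes, which is not hypothesised. As written, none of hypotheses (1), (8), or Corollary~\ref{sec1-cor-6} are actually used, which is a sign that the mechanism producing the pencil is missing.

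The paper's route is different and explains the role of every hypothesis. One passes to the quotient $\pi\colon X\to Y$ and first establishes $H^1(\mathcal{O}_Y)=0$: a case analysis on the Kodaira dimension of the minimal model of a resolution of $Y$ disposes of $\kappa\geq 1$ directly (those cases already force $X$ birationally ruled via Proposition~\ref{sec2-prop-2} applied to $|3K_X|$ or $|14K_X|$, using hypotheses (5), (6)), while $\kappa\leq 0$ together with $b_1(X')=0$ forces the model to be K3, Enriques, or rational, whence $H^1(\mathcal{O}_Y)=0$. Next, since $|A|$ is base point free one picks a single eigenvector $W\in|A|$ stabilized by $D$ with $P\notin W$, sets $B=C_1+C_2+C_3+W_{\mathrm{red}}$, and pushes forward to $\hat B\subset Y$. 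Corollary~\ref{sec1-cor-6} applied to $\hat B$ (this is exactly where the common point $P\in C_1\cap C_2\cap C_3$ enters) gives $h^1(\mathcal{O}_{\hat B})\geq 2$; combined with $H^1(\mathcal{O}_Y)=0$ and Serre duality this yields $h^0(\mathcal{O}_Y(K_Y+\hat B))\geq 2$. One then checks, using hypothesis (5) to kill the residual piece of $\Delta$, that $\pi^\ast(K_Y+\hat B)=K_X+B$, so that $\pi^\ast|K_Y+\hat B|$ is a genuine positive-dimensional $D$-linear subsystem of $|K_X+B|$ consisting of $D$-stabilized curves. Now Proposition~\ref{sec2-prop-2} applies with $L=K_X+B$ (which is $D$-linear by construction), and hypotheses (4)--(6) feed directly into its numerical conditions.
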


\begin{remark}
According to~\cite{Tz19}, if $X$ is a canonically polarized surface which possesses a nontrivial global vector field, then if $p>f(K_X^2)$, where $f(K_X^2)$ is an explicit function of $K_X^2$,  $X$ is unirational and $\pi_1(X)=\{1\}$. In particular $b_1(X)=0$. Hence in order to obtain criteria which imply the nonexistence of global vector fields, it suffices to treat surfaces with $b_1=0$. The condition that $b_1(X^{\prime})=0$ in the Proposition~\ref{sec2-prop-3} aims to treat these cases. However, this proposition applies to many more cases, not necessarily canonically polarized surfaces.
\end{remark}

\begin{proof}[Proof of Proposition~\ref{sec2-prop-3}]
Consider the following diagram
\begin{gather}\label{sec2-diagram-1}
\xymatrix{
                &                  &  X\ar[d]^{\pi} \\
 Z              & Y^{\prime} \ar[l]_h \ar[r]^g  & Y
}
\end{gather}
where $Y$ is the quotient of $X$ by the $\mu_p$ action induced by $D$, $Y^{\prime}$ is the minimal resolution of $Y$ and $Z$ the minimal model of $Y^{\prime}$. By~\cite{Tz17b}, $Y$ is normal and $\mathbb{Q}$-Gorenstein.

\textbf{Claim:} Z is either a K3, an Enriques, or a rational surface and $H^1(\mathcal{O}_Y)=0$.

In order to prove the claim consider cases with respect to the Kodaira dimension, $\kappa(Z)$, of $Z$.

\textit{Case 1.} Suppose that $\kappa(Z)=2$. Then according to~\cite{Ek88}, $|3K_Z|$ is base point free. In particular $\dim |3K_Z| \geq 2$ and therefore $\dim |3K_Y| \geq 2$. Then by the adjunction formula for $\pi$ it follows that
\[
K_X=\pi^{\ast}K_Y + (p-1)\Delta,
\]
where $\Delta$ is the divisorial part of $D$. Suppose that $\Delta \not= 0$. Then 
\[
K_X \cdot H =\pi^{\ast}K_Y\cdot H +(p-1)\Delta \cdot H > p, 
\]
since $H$ is ample and $\pi^{\ast}K_Y \cdot H >0$. But this contradicts the assumption in (\ref{sec2-prop-3}.5). Therefore $\Delta =0$ and hence $K_X=\pi^{\ast}K_Y$. In particular $K_X$ is $D$-linear and the pullback $\pi^{\ast} |3K_Y| \subset |3K_X|$ is a positive dimensional linear system of curves stabilized by  $D$. Let $L=3K_X$. Then by (\ref{sec2-prop-3}.5), it follows that $L\cdot H <p$ and from 
(\ref{sec2-prop-3}.6) it follows that 
\[
(A+K_X)\cdot L +(A\cdot L)^2/A^2 < p-3.
\]
Hence from Proposition~\ref{sec2-prop-2}, $X$ is birationally ruled.

\textit{Case 2.} Suppose that $\kappa(Z)=1$. Then according to~\cite{KU85}, $|14K_Z|$ defines the elliptic fibration on $Z$ and hence $\dim|14K_Z|\geq 2$. Then the same arguments as in the case when $\kappa(Z)=2$ give that $X$ is birationally ruled.

\textit{Case 3.} Suppose that $\kappa(Z) \leq 0$. Then I claim that $b_1(Z)=0$. 

Let $X^{\prime} \rightarrow X$ be the minimal resolution of $X$. Then by assumption, $b_1(X^{\prime})=0$ and $D$ lifts to a vector field $D^{\prime}$ on $X^{\prime}$. Then by~\cite[Proposition 3.6]{Tz17b}, there exists a commutative diagram
\[
\xymatrix{
\tilde{X} \ar[r]_{\sigma} \ar@/^10pt/[rr]^{\tau}& X^{\prime\prime} \ar[r]_{\phi}\ar[d]^{\pi^{\prime\prime}} & X^{\prime}\ar[d]^{\pi^{\prime}}\\
     & W^{\prime}\ar[r]^{\psi} & W
}
\]
where $W$ is the quotient of $X^{\prime}$  by the $\mu_p$ action induced by $D^{\prime}$, $W^{\prime}$ is the minimal resolution of $W$ and $X^{\prime\prime}$ is the normalization of $W^{\prime}$ in $K(X^{\prime})$. In particular, $\phi$ and $\psi$ are birational while $\pi^{\prime}$ and $\pi^{\prime\prime}$ are purely inseparable of degree $p$. $\tilde{X}$ is the minimal resolution of $X^{\prime\prime}$ and $\tau=\phi \sigma$. Since $X^{\prime}$ is smooth, $\tau$ is a composition of blow ups of points. Since $b_1$ is invariant between smooth birational surfaces, $b_1(\tilde{X})=b_1(X^{\prime})=0$, by assumption. Then a straightforward application of the Leray spectral sequence for $\sigma$ it follows that $b_1(X^{\prime\prime})=0$ as well. Finally, since $\pi^{\prime\prime}$ is purely inseparable, it is an \'etale equivalence and therefore $b_1(W^{\prime})=b_1(X^{\prime\prime})=0$.

Now form the above constructions it is clear that $W^{\prime}$ and $Z$ are birational and hence since $b_1$ is a birational invariant between smooth surfaces, it follows that $b_1(Z)=0$. Then according to the classification of surfaces in positive characteristic~\cite{Li13}, it follows that $Z$ can be either a $K3$ surface, an Enriques surface or a ruled surface. In any case $H^1(\mathcal{O}_Z)=0$. Hence $H^1(\mathcal{O}_{Y^{\prime}})=0$ and from the Leray spectral sequence for $g$ it follows that $H^1(\mathcal{O}_Y)=0$. This concludes the proof of the claim.

Let us now return to the proof of the main part of the proposition. Let $P \in C_1\cap C_2\cap C_3$. Since $A$ is $D$-linear, $D$ induces a $k$-linear map 
\[
D_{\ast} \colon H^0(\mathcal{O}_X(A)) \rightarrow H^0(\mathcal{O}_X(A)),
\]
with the properties stated in Proposition~\ref{sec1-prop1}. In particular, $D_{\ast}$ is diagonalizable. Therefore there exists a basis of $|A|$ consisting of eigenvalues of $D_{\ast}$, and hence of  curves stabilized by $D$. Since $|A|$ is base point free there exits a basis element $W=\sum_{i=1}^m n_i W_i \in |A|$, which is stabilized by $D$ and $P\not\in W$. Since $A\cdot H<p$, it follows that $n_i<p$ for all $i$ and therefore by~\cite[Proposition 3.2]{Tz19}, $D$ restricts to vector fields on every $W_i$ (some restrictions may be zero).  Let now $\hat{W}=\sum_{i=1}^mW_i$. After a renumbering of the $W_i's$ we can  write 
$\hat{W}=W^{\prime}+W^{\prime\prime}$, where $W^{\prime}=\sum_{i=1}^kW_i$ and $W^{\prime\prime}=\sum_{i=k+1}^mW_i$, $D|_{W_i}\not=0$, $1\leq i \leq k$ and $D|_{W_{i}}=0$, $k+1\leq i \leq m$. Hence $W^{\prime\prime} \subset \Delta$, where $\Delta$ is the divisorial part of $D$. Let now 
\[
B=C_1+C_2+C_3+\hat{W}=C_1+C_2+C_3+W^{\prime}+W^{\prime\prime}.
\]
Let also $\pi \colon X \rightarrow Y$ be the quotient of $X$ by the $\mu_p$ action induced by $D$. Then, as proved before, $X$ is a normal $\mathbb{Q}$-factorial surface such that $H^1(\mathcal{O}_Y)=0$. 
Let $\hat{C}_i=\pi(C_i)$, $i=1,2,3$, $W^{\ast}=\pi(W^{\prime})$, $W^{\ast\ast}=\pi(W^{\prime\prime})$ and $\hat{B}=\hat{C}_1+\hat{C}_2+\hat{C}_3+W^{\ast}+ W^{\ast\ast}$. Now since $\pi$ is purely inseparable it is topologically a homeomorphism. Hence the configuration of the components of $\hat{B}$ is the same as this of $B$. Moreover, the conditions of Corollary~\ref{sec1-cor-6} are invariant under topological homeomorphism. Hence $\dim H^1(\mathcal{O}_{\hat{B}})\geq 2$. Now from the exact sequence
\[
0 \rightarrow \mathcal{O}_Y(-\hat{B}) \rightarrow \mathcal{O}_Y \rightarrow \mathcal{O}_{\hat{B}} \rightarrow 0
\]
we get the exact sequence in cohomology
\[
H^1(\mathcal{O}_Y)\rightarrow H^1(\mathcal{O}_{\hat{B}}) \rightarrow H^2(\mathcal{O}_Y(-\hat{B}) )\rightarrow H^2(\mathcal{O}_Y) \rightarrow H^2(\mathcal{O}_{\hat{B}})=0
\]
Now since $H^1(\mathcal{O}_Y)=0$ and $\dim H^1(\mathcal{O}_{\hat{B}})\geq 2$, it follows that $\dim H^2(\mathcal{O}_Y(-\hat{B}) )\geq 2$. Therefore by Serre duality it follows that
\begin{gather}\label{sec2-eq-5}
\dim H^0(\mathcal{O}_Y(K_Y+\hat{B}) )\geq 2
\end{gather}
Now
\begin{gather}
\pi^{\ast} (K_Y+\hat{B})= \pi^{\ast} K_Y +\pi^{\ast}\hat{C}_1+\pi^{\ast}\hat{C}_2+\pi^{\ast}\hat{C}_3 +\pi^{\ast}W^{\ast}+\pi^{\ast}W^{\ast\ast}=\\ \nonumber
\pi^{\ast}K_Y +C_1+C_2+C_3+W^{\prime} +pW^{\prime\prime},
\end{gather}
since $C_i$, $i=1,2,3$ and $W_j$, $j=1,\ldots, k$ are integral curves of $D$, $D|_{C_i}\not=0$, $D|_{W_j}\not=0$ and $W^{\prime\prime}\subset \Delta$.

Now from the adjunction formula for $\pi$ it follows that $\pi^{\ast}K_Y=K_X-(p-1)\Delta$. Moreover, $\Delta=\Delta^{\prime}+W^{\prime\prime}$, and $\Delta^{\prime}$, $W^{\prime\prime}$ do not have any common components. Hence the equation (\ref{sec2-eq-5}) becomes
\begin{gather}\label{sec2-eq-6}
\pi^{\ast} (K_Y+\hat{B})=K_X+C_1+C_2+C_3+W^{\prime}+W^{\prime\prime}-(p-1)\Delta^{\prime}=K_X+B-(p-1)\Delta^{\prime}.
\end{gather}
Now from the assumptions of the proposition, $(K_X+C_1+C_2+C_3+A)\cdot H <p/3$. By its definition also, $\hat{W}=W_{red}$, where $W\in |A|$. Hence
\[
(K_X+B)\cdot H=(K_X+C_1+C_2+C_3+\hat{W})\cdot H \leq (K_X+C_1+C_2+C_3+A)\cdot H<p/3.
\]
Moreover, since $|K_Y+\hat{B}|\not= 0$, $\pi^{\ast}(K_Y+\hat{B})\cdot H >0$. Then from the equation (\ref{sec2-eq-6}) it follows that $\Delta^{\prime}=0$ and therefore
\begin{gather}\label{sec2-eq-7}
\pi^{\ast}(K_Y+\hat{B})=K_X+B.
\end{gather}
Hence the linear system $|K_X+B|$  has a positive dimensional subsystem corresponding to $\pi^{\ast}|K_Y+\hat{B}|$ of curves stabilized by $D$. Then the assumptions of 
the proposition satisfy the assumptions of Proposition~\ref{sec2-prop-2} and hence $X$ is birationally ruled as claimed.
\end{proof}

The next two propositions show that under certain conditions, if two distinct curves stabilized by  $D$ intersect at at least two distinct points, then $X$ is birationally ruled.

\begin{proposition}\label{sec2-prop-5}
Let $X$ be a normal $\mathbb{Q}$-Gorenstein projective surface defined over an algebraically closed field of characteristic $p>0$. Let $D$ be a nontrivial global vector field on $X$ such that $D^p=D$. 
Let $C_1, C_2$ be two distinct $\mathbb{Q}$-Cartier reduced and irreducible integral curves of $D$ such that  at least one of them is not contained in the divisorial part of $D$. Let $A$ be an ample $D$-linear invertible sheaf on $X$ and,
\begin{enumerate}
\item $C_1\cdot C_2<p/d$, where $D$ is the minimum of the indices of $C_1$ and $C_2$ in $X$. 
\item $C_1\cap C_2$ consists of at least two distinct points.
\item $K_X+A$ is nef. 
\item $|A|$ is base point free.
\item $(K_X+C_1+C_2+2A)\cdot A<p/3$, 
\item $(A+K_X)\cdot L +14(A\cdot L)^2/A^2<(p-3)/14,$
where $L=K_X+C_1+C_2+2A$.
\item $D$ lifts to the minimal resolution of $X$.
\item $b_1(X^{\prime})=0$.
\end{enumerate}
Then $X$ is birationally ruled.
\end{proposition}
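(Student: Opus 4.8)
The plan is to adapt, almost line for line, the argument of Proposition~\ref{sec2-prop-3}: produce a positive dimensional linear system of curves stabilized by $D$ and then invoke Proposition~\ref{sec2-prop-2}. Write $\pi\colon X\to Y$ for the quotient of $X$ by the $\mu_p$-action induced by $D$, and form the diagram (\ref{sec2-diagram-1}) with $Y^{\prime}\to Y$ the minimal resolution of $Y$ and $Z$ its minimal model. I would first rerun the proof of the \textbf{Claim} inside Proposition~\ref{sec2-prop-3}: hypotheses (7) and (8) let one control $b_1(Z)$, and hypotheses (5),(6) imply the numerical assumptions of Proposition~\ref{sec2-prop-2} for $L=3K_X$ and $L=14K_X$ exactly as in loc.\ cit.; so either $\kappa(Z)\geq 1$ and $X$ is already birationally ruled (and we are done), or $\kappa(Z)\leq 0$, $Z$ is a $K3$, Enriques or rational surface, and $H^1(\mathcal{O}_Y)=0$. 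Assume henceforth $H^1(\mathcal{O}_Y)=0$.

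The core of the proof is to build a connected reduced curve $\hat B\subset Y$ with $h^1(\mathcal{O}_{\hat B})\geq 2$. Let $P,Q\in C_1\cap C_2$ be two distinct points given by hypothesis (2). Since $A$ is $D$-linear and $|A|$ is base point free, Proposition~\ref{sec1-prop1} supplies a basis of $H^0(\mathcal{O}_X(A))$ consisting of eigenvectors of the induced map $D_{\ast}$, i.e.\ of curves stabilized by $D$; as such a basis cannot lie in the hyperplane of sections vanishing at $P$, nor in the one vanishing at $Q$, there is an eigenvector $s_1$ with $s_1(P)\neq 0$ and an eigenvector $s_2$ with $s_2(Q)\neq 0$. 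Set $W_i=Z(s_i)\in|A|$; these are stabilized by $D$, and so is $W_1+W_2\in|2A|$, being the divisor of the eigenvector $s_1s_2$ of $D_{\ast}$ on $H^0(\mathcal{O}_X(2A))$. Hypothesis (1) (with \cite[Proposition 3.2]{Tz19}) moreover guarantees that $C_1$ and $C_2$ are themselves stabilized by $D$, and also bounds the multiplicities occurring in $W_1,W_2$ (since $A^2<p/6$), so that every component of $W_i$ is stabilized by $D$. Put $B=C_1+C_2+\hat W_1+\hat W_2$ with $\hat W_i=(W_i)_{\mathrm{red}}$, and let $\hat B$ be the image of $B_{\mathrm{red}}$ in $Y$; since $\pi$ is a homeomorphism, the dual graph of $\hat B$ is that of $B_{\mathrm{red}}$. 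In that graph, $C_1$ and $C_2$ meet at $P$ and $Q$ and so give one loop; because $A$ is ample, $\hat W_1$ meets both $C_1$ and $C_2$ at points $\neq P$, $\hat W_2$ meets both at points $\neq Q$, and $\hat W_1\cap\hat W_2\neq\emptyset$, and a short case analysis of the intersection patterns --- using $P\notin\hat W_1$ and $Q\notin\hat W_2$ to exclude the degenerate possibilities --- yields a second loop independent of the first. Hence Proposition~\ref{sec1-prop-5} (applied to the decomposition of $B_{\mathrm{red}}$ into $C_1$, $C_2$ and the components of $\hat W_1\cup\hat W_2$, as in the proof of Corollary~\ref{sec1-cor-6}) gives $h^1(\mathcal{O}_{\hat B})\geq 2$.

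The rest follows the template of Proposition~\ref{sec2-prop-3}. From $H^1(\mathcal{O}_Y)=0$, $h^1(\mathcal{O}_{\hat B})\geq 2$, the exact sequence $0\to\mathcal{O}_Y(-\hat B)\to\mathcal{O}_Y\to\mathcal{O}_{\hat B}\to 0$ and Serre duality one gets $\dim H^0(\mathcal{O}_Y(K_Y+\hat B))\geq 2$. Pulling back along $\pi$, using $\pi^{\ast}K_Y=K_X-(p-1)\Delta$ and that $C_1,C_2$ and the non-$\Delta$ components of $W_1,W_2$ are integral curves of $D$, one finds $\pi^{\ast}(K_Y+\hat B)=K_X+B-(p-1)\Delta^{\prime}$ for some sub-divisor $\Delta^{\prime}$ of the divisorial part of $D$; since $|K_Y+\hat B|\neq\emptyset$ we have $\pi^{\ast}(K_Y+\hat B)\cdot A>0$, while $(K_X+B)\cdot A\leq(K_X+C_1+C_2+2A)\cdot A<p/3$ by hypothesis (5), which forces $\Delta^{\prime}=0$. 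Thus $\pi^{\ast}|K_Y+\hat B|$ is a positive dimensional subsystem of $|K_X+B|$ consisting of curves stabilized by $D$, with $K_X+B$ numerically bounded by $L=K_X+C_1+C_2+2A$. I would then apply Proposition~\ref{sec2-prop-2} with this linear system, with $A$ in the role of its nef and big $D$-linear divisor and also in the role of its ample divisor: hypotheses (7) and (3) here supply its conditions (2) and (4), the ampleness and $D$-linearity of $A$ supply its conditions (3) and (5), hypothesis (5) gives $L\cdot A<p$, and hypothesis (6), multiplied by $14$, gives $(A+K_X)\cdot L+(A\cdot L)^2/A^2<p-3$. The conclusion is that $X$ is birationally ruled.

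The main obstacle is the middle paragraph --- making $\hat B$ genuinely carry $h^1\geq 2$ rather than just $h^1\geq 1$. The two points of $C_1\cap C_2$ supply only one loop, so one must attach auxiliary curves that are simultaneously members of a $D$-linear base point free system, stabilized by $D$, and placed so as to create a second independent loop. This is precisely why $2A$, rather than $A$, enters hypotheses (5) and (6): a single eigenvector of $D_{\ast}$ in $|A|$ need not avoid both $P$ and $Q$, so one is forced to use two auxiliary members, one avoiding $P$ and one avoiding $Q$. Verifying, in all degenerate configurations of these two curves against $C_1\cup C_2$, that a second loop really appears is the one genuinely delicate point.
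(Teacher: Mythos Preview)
Your approach would probably work, but the paper takes a much cleaner route that completely sidesteps the ``genuinely delicate'' case analysis you flag at the end. Instead of rerunning the entire proof of Proposition~\ref{sec2-prop-3}, the paper \emph{reduces} to that proposition by producing a \emph{third} integral curve of $D$ through one of the two points $P,Q$.

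The key observation you are missing is this: hypothesis~(1) forces every point of $C_1\cap C_2$ to be a fixed point of $D$ (by the argument of Proposition~\ref{sec1-prop-2}), while hypotheses~(3) and~(6) give $p_a(C_i')<(p-1)/2$ on the minimal resolution, so $D$ lifts to the normalization of $C_1$ and therefore has \emph{exactly two} fixed points on $C_1$ (by~\cite[Corollary 3.8]{Tz19}). Hence $(C_1\cap C_2)_{\mathrm{red}}=\{P,Q\}$ exactly. Now pick a single $W\in|A|$ stabilized by $D$ with $Q\notin W$. Since $A$ is ample, $W$ meets $C_1$, and by Proposition~\ref{sec1-prop-2} every such intersection point is a fixed point of $D$ on $C_1$ --- but the only fixed point other than $Q$ is $P$. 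Hence some component $W_j$ of $W$ passes through $P$, and $C_1,C_2,W_j$ are three distinct integral curves of $D$ through $P$. Apply Proposition~\ref{sec2-prop-3} directly with $C_3=W_j$; since $W_j$ is a component of a member of $|A|$, the divisor $C_1+C_2+C_3+A$ is numerically bounded by $C_1+C_2+2A$, which is exactly why $2A$ (not $A$) appears in hypotheses~(5) and~(6).

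So the $2A$ is not there because you need \emph{two} auxiliary members of $|A|$ to build two loops; it is there because you need \emph{one} auxiliary member, and its relevant component gets added to $C_1+C_2$ before feeding everything into Proposition~\ref{sec2-prop-3}. The fixed-point count does all the geometric work that your loop-counting argument would have to do by hand.
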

\begin{proof}
Since $C_1\cdot C_2< p/d$, the proof of Proposition~\ref{sec1-prop-2} shows that the points of intersection of $C_1$ and $C_2$ are fixed point of $D$. 

By the assumption, at least one of $C_1$, $C_2$ is not contained in the divisorial part of $D$. By a renumbering of the curves we may assume that $C_1$ is not contained in the divisorial part of $D$.  I will show next that $D$ has exactly two fixed points on $C_1$.  Let $C^{\prime}_i$, $i=1,2$ be the birational transforms of $C_1, C_2$ in $X^{\prime}$. The assumptions (3) and (6) imply that
\[
(A+K_X) \cdot C_i +(A\cdot C_i)^2/A^2 <p-3.
\]
Then the equation (\ref{sec2-eq-4}) implies that  $p_a(C^{\prime}_i)<(p-1)/2$. Therefore, by~\cite[Proposition 3.7]{Tz19} the lifting $D^{\prime}$ of $D$ in $X^{\prime}$ lifts to the normalization of $C_i^{\prime}$. Hence $D$ lifts to the normalization of $C_i$, $i=1,2$, and hence by~\cite[Corollary 3.8]{Tz19}, $D$ has exactly two fixed points on $C_i$, $i=1,2$. Therefore, since every point of intersection of $C_1$ and $C_2$ is a fixed point of $D$, $(C_1\cap C_2)_{red}$ is exactly two points, say $P$ and $Q$. 

Since $A$ is $D$-linear and $D^p=D$,  it follows from Proposition~\ref{sec1-prop1} that $D_{\ast}$ is diagonalizable and hence  $|A|$ has a basis consisting of  curves stabilized by $D$. Moreover, since $|A|$ is base point free, there exists a basis element $W=\sum_{i=1}^mn_iW_i \in |A|$ which is stabilized by $D$ such that $Q\not\in W$. In particular, $C_1$ and $C_2$ are not components of $W$. Moreover, from condition (5) and (3), since $A$ is nef and big and $K_X+A$ is nef, it follows that $A^2<p$. Then, by Proposition~\ref{sec1-prop-2}.1,~\ref{sec1-prop-2}.2,  $W_i$ is stabilized by $D$ for all $i=1,\ldots, m$ and every point of intersection of $W_i$ and $C_j$, $i=1,\ldots, m$, $j=1,2$, is a fixed point of $D$. Then since there are exactly two fixed points of $D$ on $C_i$, $i=1,2$, and $Q \not\in W$, then $P\in W$. Let $W_j$ be a component of $W$ such that $P\in W_j$. Then $C_1$, $C_2$ and $W_j$ are three distinct integral curves of $D$ passing through the same point. Then we are in the situation of Proposition~\ref{sec2-prop-3}, with $C_3=W_j$. The assumptions of the proposition imply that the assumptions of Proposition~\ref{sec2-prop-3} are satisfied and therefore $X$ is birationally ruled as claimed.

\end{proof}

The next proposition shows that under certain conditions, a fixed integral curve of $D$ does not meet other integral curves of $D$ in more than two points.

\begin{proposition}\label{sec2-prop-10}
Let $X$ be a normal projective $\mathbb{Q}$-factorial surface defined over an algebraically closed field of characteristic $p>0$ which has a nontrivial global vector field $D$ such that $D^p=D$. Let $C, C_i$, $i=1,\ldots, m$ be distinct reduced and irreducible $\mathbb{Q}$-Cartier integral curves of $D$ such that
\begin{enumerate}
\item $C$ is not contained in the divisorial part of $D$.
\item $C\cdot C_i < p/d$, $i=1,\ldots, m$ and $K_X \cdot C +C^2<p-3$, where $d$ is the index of $C$ in $X$.
\item $D$ lifts to the minimal resolution of $X$.
\end{enumerate}
Then 
\[
|\cap_{i=1}^m C\cap C_i| \leq 2.
\]
\end{proposition}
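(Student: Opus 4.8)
The plan is to reduce the assertion to a bound on the number of fixed points of $D$ on $C$, following the pattern of the proof of Proposition~\ref{sec2-prop-5}.

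First I would show that, for every $i$, each point of $C\cap C_i$ is a fixed point of $D$. Since $C$ is not contained in the divisorial part $\Delta$ of $D$, $D$ stabilizes $C$, i.e. $D(I_C)\subset I_C$. Fix $i$. If $C_i\subset\Delta$ then $C_i$ is contained in the fixed locus of $D$ and there is nothing to prove. Otherwise $C_i$ is a non-degenerate integral curve of $D$, hence $D$ stabilizes $C_i$ as well and $D(I_C+I_{C_i})\subset I_C+I_{C_i}$. Let $P\in C\cap C_i$ and let $U=\mathrm{Spec}\,A$ be an affine neighbourhood of $P$ in $X$ containing no other point of $C\cap C_i$; set $Q=I_C|_U+I_{C_i}|_U$, so that $D(Q)\subset Q$ and $r(Q)=\mathbf{m}_A$. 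The symbolic-power computation carried out in the proof of Proposition~\ref{sec1-prop-2} gives
\[
\dim_k(A/Q)\le (dC)\cdot C_i=d\,(C\cdot C_i)<p,
\]
by hypothesis~(2), where $d$ is the index of $C$ so that $dC$ is Cartier. Then, arguing exactly as in the remainder of the proof of Proposition~\ref{sec1-prop-2} (i.e. as in~\cite[Proposition 3.4]{Tz19}), $P$ is a fixed point of $D$. Hence $\cup_{i=1}^m(C\cap C_i)$ is contained in the set of fixed points of $D$ lying on $C$.

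Next I would bound the number of fixed points of $D$ lying on $C$ by two. Let $f\colon X'\to X$ be the minimal resolution and $D'$ the lifting of $D$ to $X'$, which exists by hypothesis~(3); let $C'=f^{-1}_{\ast}C$ be the birational transform of $C$. Since $C\not\subset\Delta$, $C'$ is not contained in the divisorial part of $D'$, so $D'$ restricts to a non-trivial vector field on $C'$. Writing $f^{\ast}C=C'+E$ and $f^{\ast}K_X=K_{X'}+G$ with $E$ and $G$ effective and $f$-exceptional, one has $(C')^2\le C^2$ and $K_{X'}\cdot C'\le K_X\cdot C$, so adjunction on the smooth surface $X'$ gives
\[
2p_a(C')-2=(C')^2+K_{X'}\cdot C'\le C^2+K_X\cdot C<p-3,
\]
that is, $p_a(C')<(p-1)/2$. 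By~\cite[Proposition 3.7]{Tz19}, $D'$ then lifts to the normalization $\bar C$ of $C'$, equivalently $D$ lifts to the normalization of $C$, with induced non-trivial vector field $\bar D$ satisfying $\bar D^p=\bar D$; hence by~\cite[Corollary 3.8]{Tz19}, used exactly as in the proof of Proposition~\ref{sec2-prop-5}, $D$ has at most two fixed points on $C$. Combined with the previous paragraph this gives $\bigl|\cup_{i=1}^m(C\cap C_i)\bigr|\le 2$, which is the stated inequality.

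This argument has no serious obstacle; it is essentially a streamlined version of the reasoning in Proposition~\ref{sec2-prop-5}, in which the bound on $p_a(C')$ comes directly from $K_X\cdot C+C^2<p-3$ rather than through an auxiliary ample divisor. The inequalities $(C')^2\le C^2$, $K_{X'}\cdot C'\le K_X\cdot C$ and the local length estimate of the first paragraph are routine. The one point requiring care is the passage from fixed points of $D$ on $C$ to fixed points of $\bar D$ on $\bar C$ across the singular points of $C'$ and the $f$-exceptional locus above $\mathrm{Sing}(X)\cap C$, and that is precisely what~\cite[Proposition 3.7, Corollary 3.8]{Tz19} supply; the $\mathbb{Q}$-Cartier hypotheses on $C$ and the $C_i$ enter only through the index $d$ and through~\cite[Corollary 3.8]{Tz19}.
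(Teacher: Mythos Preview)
Your proof is correct and follows essentially the same approach as the paper's own proof: you show that the intersection points $C\cap C_i$ are fixed points of $D$ via the length estimate $\dim_k(A/Q)\le d(C\cdot C_i)<p$ from the proof of Proposition~\ref{sec1-prop-2}, and you bound the number of fixed points of $D$ on $C$ by two via the genus bound $p_a(C')<(p-1)/2$ on the minimal resolution together with~\cite[Corollary 3.8]{Tz19}. The only cosmetic difference is that the paper presents these two steps in the opposite order.
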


\begin{proof}

First  I will show that $D$ lifts to the normalization of $C$ and hence by~\cite[Corollary 3.8]{Tz19} $D$ has at most two fixed points on $C$. Let $f \colon X^{\prime} \rightarrow X$ be the minimal resolution of $X$ and $C^{\prime}=f_{\ast}^{-1}C$ the birational transform of $C$ in $X^{\prime}$. By assumption $D$ lifts to a vector field $D^{\prime}$ on $X^{\prime}$. Since $C$ is an integral curve of $D$ not contained in 
the divisorial part of $D$, $C^{\prime}$ is an integral curve of $D^{\prime}$ not contained in the divisorial part of $D^{\prime}$. Now, since $X^{\prime}$ is the minimal resolution of $X$, it follows that $K_{X^{\prime}}\cdot C^{\prime}\leq K_X \cdot C$. Moreover, $(C^{\prime})^2\leq C^2$. Therefore, 
\[
p_a(C^{\prime})= 1+\frac{1}{2} [C\cdot K_X +C^2]\leq 1+\frac{1}{2} [C\cdot K_X +C^2]
\]
Hence, by the assumptions of the proposition, $p_a(C^{\prime}) <(p-1)/2$. Hence by~\cite[Corollary 3.8]{Tz19} $D$ lifts to the normalization of $C$ and therefore, since the restriction of $D$ on $C$ is not trivial,  $D$ has at most two fixed points on $C$. On the other hand, by Proposition 2.2 and its proof, the condition (2) implies that the points of intersection $C\cap C_i$, $i=1,\ldots, m$ are fixed points of $D$. Hence, the intersection $\cap_{i=1}^m C\cap C_i $ consists of at most 2 points.
\end{proof}

The next proposition provide information about the divisorial part of a vector field of multiplicative type on a canonically polarized surface, the case of interest in this paper.

\begin{proposition}\label{sec2-prop-6}
Let $X$ be a  canonically polarized surface defined over an algebraically closed field of characteristic $p>0$. Suppose that there exists a nontrivial vector field $D$ on $X$ such that $D^p=D$ and that one of the following happens:
\begin{enumerate}
\item $K_X^2=1$ and $p>211$,
\item $K_X^2\geq 2$ and $p>156K_X^2+3. $
\end{enumerate}
Let $\Delta=\sum_{i=1}^m \Delta_i$ be the decomposition of the divisorial part of $\Delta $ in its irreducible components. Then $p_a(\Delta_i)\leq 1$, for all $i=1,\ldots, m$. Moreover, there exists at most one $1\leq i \leq m$ such that $p_a(\Delta_i)=1$. 

\end{proposition}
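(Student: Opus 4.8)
The plan is to move everything to the minimal resolution and then reduce to a numerical estimate. Let $f\colon X'\rightarrow X$ be the minimal resolution; since the singularities of $X$ are rational double points one has $K_{X'}=f^{\ast}K_X$, and for $p$ in the range allowed by the hypotheses the vector field $D$ lifts to a vector field $D'$ on $X'$ with $(D')^p=D'$ --- this is where the inequalities on $p$ enter, via the liftability estimates of~\cite{Tz19} (see also Corollary~\ref{sec1-cor-13}). By~\cite{RS76} the divisorial part $\Delta'$ of $D'$ is smooth and its irreducible components are pairwise disjoint. For each $i$ the birational transform $\Delta_i':=f_{\ast}^{-1}\Delta_i$ is one of these components, hence a smooth irreducible curve; writing $f^{\ast}\Delta_i=\Delta_i'+E_i$ with $E_i\geq 0$ $f$-exceptional one gets $K_{X'}\cdot\Delta_i'=K_X\cdot\Delta_i$ and $(\Delta_i')^2=\Delta_i^2+E_i^2\leq\Delta_i^2$. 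Since $\omega_X$ is invertible ($X$ is Gorenstein), a comparison of $\omega_{\Delta_i}$ with $\bigl(\omega_X\otimes\mathcal{O}_X(\Delta_i)\bigr)^{[1]}\otimes\mathcal{O}_{\Delta_i}$ gives $2p_a(\Delta_i)-2\leq\Delta_i\cdot(\Delta_i+K_X)$. Thus the first assertion reduces to proving $\Delta_i\cdot(\Delta_i+K_X)\leq 0$ for every $i$, and the second to proving this inequality strictly for all but at most one $i$.

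For the numerical estimate I would use the quotient $\pi\colon X\rightarrow Y$ by the $\mu_p$-action induced by $D$. By~\cite{Tz17b}, $Y$ is normal and $\mathbb{Q}$-Gorenstein, $K_X=\pi^{\ast}K_Y+(p-1)\Delta$, and since the components of $\Delta$ lie in the fixed locus of $D$ one has $\pi^{\ast}\bar\Delta=p\Delta$, where $\bar\Delta=\pi(\Delta)$. Hence $K_X+\Delta=\pi^{\ast}(K_Y+\bar\Delta)$, $K_X=\pi^{\ast}\bigl(K_Y+(1-\tfrac1p)\bar\Delta\bigr)$, and $\Delta_i\cdot(\Delta_i+K_X)$ is expressed through $\bar\Delta_i\cdot(\bar\Delta_i+K_Y)$ plus correction terms supported at the images of the singular points of $X$; so the problem becomes that of bounding the arithmetic genera of the components of $\bar\Delta$ on $Y$. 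As $K_X$ is ample, $K_Y+(1-\tfrac1p)\bar\Delta$, hence also $K_Y+\bar\Delta$, is big, so I would run the Kodaira-dimension trichotomy for the minimal model $Z$ of the minimal resolution of $Y$, exactly as in the proof of Proposition~\ref{sec2-prop-3}. If $\kappa(Z)\geq 1$, the effective pluricanonical and elliptic-fibration systems of~\cite{Ek88},~\cite{KU85} give, after pullback by $\pi$, a positive-dimensional linear system of curves on $X$ stabilized by $D$ to which Proposition~\ref{sec2-prop-2} applies, forcing $X$ to be birationally ruled --- impossible, since $X$ is of general type. If $\kappa(Z)\leq 0$ one bounds $\bar\Delta^2$ and $K_Y\cdot\bar\Delta$ directly: combining the identity $c_2(T_{X'}\otimes\mathcal{O}_{X'}(-\Delta'))=c_2(X')+K_{X'}\cdot\Delta'+(\Delta')^2$ (the left side being the length of the isolated zero scheme of $D'$, hence non-negative) with $c_2(X')=12\chi(\mathcal{O}_X)-K_X^2$ and Noether's inequality gives an upper bound for $\sum_i\bigl(2p_a(\Delta_i')-2\bigr)$, and the a priori bounds on the number of isolated fixed points and divisorial components of $D$ (again from~\cite{Tz19}) then isolate each summand. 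It is precisely in matching these estimates against the numerical hypotheses of Proposition~\ref{sec2-prop-2} that the thresholds $p>211$ and $p>156K_X^2+3$ appear.

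For the uniqueness part, suppose $\Delta_i$ and $\Delta_j$ both had $p_a=1$. Their birational transforms $\Delta_i',\Delta_j'$ are smooth, pairwise disjoint curves with $p_a\leq 1$, and $K_{X'}\cdot\Delta_k'=K_X\cdot\Delta_k>0$ forces $(\Delta_k')^2<0$ by adjunction on $X'$; if they are elliptic they are contractible by Proposition~\ref{sec1-prop-7} to Gorenstein elliptic singularities. I would then reach a contradiction either by contracting both and re-running the Chern-class/ramification count above on the resulting surface, or by producing from $\Delta_i$, $\Delta_j$ together with a base-point-free pencil stabilized by $D$ a configuration to which Proposition~\ref{sec2-prop-2} applies and concluding that $X$ is birationally ruled. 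The step I expect to be the main obstacle is the quantitative heart of the second paragraph: extracting from the adjunction formula for $\pi$ and the ramification and Euler-characteristic identities a bound on $\bar\Delta^2$ and $K_Y\cdot\bar\Delta$ sharp enough to force $\Delta_i\cdot(\Delta_i+K_X)\leq 0$, and tracking it through the case analysis so that it yields the explicit inequalities in the statement.
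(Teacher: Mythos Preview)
Your proposed route has a genuine gap in the $\kappa(Z)\leq 0$ branch, and this is precisely where the paper's argument is different and decisive.

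The identity $c_2(T_{X'}\otimes\mathcal{O}_{X'}(-\Delta'))=c_2(X')+K_{X'}\cdot\Delta'+(\Delta')^2$, together with non-negativity of the left side, gives
\[
\sum_i\bigl(2p_a(\Delta_i')-2\bigr)=K_{X'}\cdot\Delta'+(\Delta')^2\ \geq\ -c_2(X'),
\]
which is a \emph{lower} bound on $\sum_i(2p_a(\Delta_i')-2)$, not an upper bound. It therefore cannot force any $p_a(\Delta_i')\leq 1$, and the ``a priori bounds on the number of isolated fixed points'' do not reverse this direction. So the reduction you set up in the first paragraph --- proving $\Delta_i\cdot(\Delta_i+K_X)\leq 0$ numerically --- is not achieved by the mechanism you propose, and I do not see an alternative numerical route to it from the inputs you list.

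The paper avoids this entirely by a cohomological contradiction argument on the quotient $Y$. First, under the hypotheses one quotes \cite[Propositions 6.1, 7.1]{Tz19} directly to conclude that the minimal model $Z$ of the resolution of $Y$ is rational, so $H^1(\mathcal{O}_Y)=0$; no $\kappa$-trichotomy is run here. Now suppose the conclusion fails: some $p_a(\Delta_i)\geq 2$, or two components have $p_a=1$. Writing $\tilde\Delta_i=\pi(\Delta_i)$ and $\tilde\Delta=\sum\tilde\Delta_i$, one checks $\pi^\ast\tilde\Delta_i=p\Delta_i$ and $p_a(\tilde\Delta_i)\geq p_a(\Delta_i)$, so in either failure case $h^1(\mathcal{O}_{\tilde\Delta})\geq 2$. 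The exact sequence $0\to\mathcal{O}_Y(-\tilde\Delta)\to\mathcal{O}_Y\to\mathcal{O}_{\tilde\Delta}\to 0$ together with $H^1(\mathcal{O}_Y)=0$ and Serre duality then yields $h^0(\mathcal{O}_Y(K_Y+\tilde\Delta))\geq 2$. Since $\pi^\ast(K_Y+\tilde\Delta)=K_X+\Delta$, one obtains a positive-dimensional $D$-linear subsystem of $|K_X+\Delta|$ consisting of curves stabilized by $D$, and applies Proposition~\ref{sec2-prop-2} with $L=K_X+\Delta$ and $A=B=K_X$; the bound $K_X\cdot\Delta\leq 3K_X^2$ from \cite{Tz19} then makes the numerical hypotheses of Proposition~\ref{sec2-prop-2} match the thresholds $p>211$, $p>156K_X^2+3$. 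This forces $X$ birationally ruled, a contradiction. Note that both assertions --- genus at most one, and at most one elliptic component --- are handled in a single stroke, since each failure already gives $h^1(\mathcal{O}_{\tilde\Delta})\geq 2$; there is no separate contraction-of-two-elliptic-curves argument.
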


\begin{proof}
As in the proof of Proposition~\ref{sec2-prop-3}, consider the diagram
\begin{gather}\label{sec2-diagram-1}
\xymatrix{
                &                  &  X\ar[d]^{\pi} \\
 Z              & Y^{\prime} \ar[l]_h \ar[r]^g  & Y
}
\end{gather}
where $Y$ is the quotient of $X$ by the $\mu_p$ action induced by $D$, $Y^{\prime}$ is the minimal resolution of $Y$ and $Z$ the minimal model of $Y^{\prime}$. Then, the assumptions of the proposition and~\cite[Propositions 6.1, 7.1]{Tz19} imply that  $Z$ is a rational surface and hence $H^1(\mathcal{O}_Z)=0$. Therefore $H^1(\mathcal{O}_{Y^{\prime}})=0$ and hence from the Leray spectral sequence for $g$ it follows that $H^1(\mathcal{O}_Y)=0$. The rest of the proof is based on the arguments in the proof of Proposition~\ref{sec2-prop-3}. 

Let $\tilde{\Delta}_i=\pi(\Delta_i)$, $i=1,\ldots, m$, and $\tilde{\Delta}=\sum_{i=1}^m\tilde{\Delta}_i$. 

\textbf{Claim:} The restriction maps $\pi_i \colon \Delta_i \rightarrow  \tilde{\Delta}_i$ are birational and $\pi^{\ast} \tilde{\Delta}_i =p\Delta_i$, $i=1,\ldots, m$. 

Indeed, let $U$ be the smooth part of $X$ and $V=\pi(U)\subset Y$. The restrictions maps $\pi_i$ are isomorphisms in $U$ and $\pi^{\ast} (\tilde{\Delta}_i|_V) =p\Delta_i|_U$~\cite{RS76}. Then since $X$ is normal, the claim follows. As a consequence it follows that $p_a(\tilde{\Delta}_i) \geq p_a(\Delta_i)$.

The exact sequence
\[
0 \rightarrow \mathcal{O}_Y(-\tilde{\Delta}) \rightarrow \mathcal{O}_Y \rightarrow \mathcal{O}_{\tilde{\Delta}} \rightarrow 0,
\]
gives the exact sequence in cohomology
\[
\cdots \rightarrow H^1(\mathcal{O}_Y) \rightarrow H^1(\mathcal{O}_{\tilde{\Delta}}) \rightarrow H^2(\mathcal{O}_Y(-\tilde{\Delta})) \rightarrow H^2(\mathcal{O}_Y) \rightarrow 0.
\]
Suppose now that $\Delta$ has at least one irreducible component of genus bigger or equal than 2 or two components of genus at least 1. Then $h^1(\mathcal{O}_{\tilde{\Delta}}) \geq 2$. Therefore, since $h^1(\mathcal{O}_Y)=0$, $h^2(\mathcal{O}_Y(-\tilde{\Delta})) \geq 2$, and hence from Serre duality it follows that
\begin{gather}\label{sec2-eq-10}
h^0(\mathcal{O}_Y(K_Y+\tilde{\Delta}))\geq 2.
\end{gather}
Now from the adjunction formula for $\pi$ it follows that
\[
\pi^{\ast}(K_Y+\tilde{\Delta})=\pi^{\ast}K_Y+p\Delta =K_X +\Delta.
\]
Hence $K_X+\Delta$ is $D$-linear. Now from~\cite[Proposition 3.14]{Tz19},  $K_X \cdot \Delta \leq 3K_X^2$. Then by~\cite[Corollary 4.3]{Tz19} it follows that if $3K_X^2<p$, a condition satisfied by the assumptions, if $C \in |K_X+\Delta|$ is an integral curve of $D$, then every reduced and irreducible component of $C$ is also an integral curve of $D$. Apply now Proposition~\ref{sec2-prop-2} with 
$L=K_X+\Delta$ and $A=B=K_X$ and $V=\pi^{\ast}|K_Y+\tilde{D}|$. Then from the equation (\ref{sec2-eq-10}), $\dim V \geq 1$. Moreover, since $K_X \cdot \Delta \leq 3K_X^2$ and $p>156K_X^2+3$, a trivial calculation shows that the conditions (1)-(4) of Proposition~\ref{sec2-prop-2} are satisfied. Moreover, by Noether's inequality, $2\chi(\mathcal{O}_X)\leq K_X^2+6$ and therefore
\[
12\chi(\mathcal{O}_X))+11K_X^2+1\leq 17K_X^2+37<p,
\]
by the assumptions. Hence by~\cite[Theorem 3.3]{Tz19}, $D$ lifts to the minimal resolution of $X$. Hence  Proposition~\ref{sec2-prop-2} applies and therefore we conclude that $X$ is birationally ruled, which is a contradiction since $K_X$ is ample. Therefore $\Delta$ does not have a component of genus at least 2 or two of genus 1, as claimed.

\end{proof}



\section{$\mu_p$ actions on canonically polarized surfaces.}\label{sec3}
The main result of this paper is the following.

\begin{theorem}\label{sec3-th-1}
Let $X$ be a canonically polarized surface defined over an algebraically closed field $k$ of characteristic $p>0$. Suppose that
\[
p>14c(1+ma)(b+ma)+\left(\frac{14}{32} ac(b+ma)\right)^2+3
\]
where 
\begin{enumerate}
\item $a=13+45mN$, 
\item $b=1+18d$, 
\item $c=4K_X^2$, 
\item $d=u^{9K_X^2}$, where $u=e^{1/e}$,
\item $n=72d(72d+1)K_X^2+2$,
\item $m=u^{43K_X^2+7n+72}$. 
\item $N=8^{n+1}m_0^2K_X^2+4\cdot 8^nm_0+\frac{2}{7}(8^{n+1}-1)$.
\item $m_0=2^{17K_X^2+37}-1$.
\end{enumerate}

Then there does not exist a nontrivial $\mu_p$ action on $X$. In particular, the component of the automorphism group scheme $\mathrm{Aut}(X)$ containing the identity is a finite group scheme which is either reduced or obtained by successive extensions by $\alpha_p$.
\end{theorem}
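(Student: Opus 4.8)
The plan is to argue by contradiction. Assume $X$ carries a nontrivial $\mu_p$-action; by~\cite{Tz17b} this is the same as a nontrivial global vector field $D$ on $X$ with $D^p=D$, and I will deduce that $X$ is birationally ruled, which is impossible since $X$ is canonically polarized, hence of general type. Let $\pi\colon X\to Y$ be the quotient by the $\mu_p$-action; $Y$ is normal and $\mathbb{Q}$-factorial~\cite{Tz17b}. Since the displayed bound on $p$ dominates every polynomial estimate in $K_X^2$ from~\cite{Tz17},~\cite{Tz19}, I may assume at the outset that $D$ lifts to a vector field $D'$ on the minimal resolution $f\colon X'\to X$ and $b_1(X')=0$ (indeed $X'$ is unirational and simply connected by~\cite{Tz19}); that the least common multiple of the indices of the singularities of $X$ and of the curves in play is at most $d=u^{9K_X^2}$ (using that the number and ranks of the rational double points are bounded by $K_X^2$ and applying Lemma~\ref{sec1-prop-10}); and that the divisorial part $\Delta$ of $D$ is as restricted as in Proposition~\ref{sec2-prop-6}.

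Next I would form the auxiliary diagram $Z\xleftarrow{h}Y'\xrightarrow{g}Y\xleftarrow{\pi}X$, with $Y'\to Y$ the minimal resolution and $Z$ its minimal model, as in the proofs of Propositions~\ref{sec2-prop-3} and~\ref{sec2-prop-6}, and divide according to $\kappa(Z)$. If $\kappa(Z)=2$, then $|3K_Z|$ is base point free~\cite{Ek88}, so $\dim|3K_Y|\ge 2$; if $\kappa(Z)=1$, then $|14K_Z|$ defines the elliptic fibration~\cite{KU85}, so $\dim|14K_Y|\ge 2$. In both cases the restriction on $\Delta$ and the huge lower bound on $p$, combined with $K_X=\pi^\ast K_Y+(p-1)\Delta$, force $\Delta=0$ (otherwise $K_X\cdot H>p$ for an ample $D$-linear $H$, as in Proposition~\ref{sec2-prop-3}), so $K_X$ is $D$-linear and $\pi^\ast$ of the pluricanonical system gives a positive-dimensional linear system of curves stabilized by $D$; Proposition~\ref{sec2-prop-2}, with $A$, $B$ suitable multiples of $K_X$, then makes $X$ birationally ruled, a contradiction. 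Hence $\kappa(Z)\le 0$; one shows $b_1(Z)=0$ exactly as in the proof of Proposition~\ref{sec2-prop-3}, so by the classification of surfaces $Z$ is rational, K3, or Enriques, whence $H^1(\mathcal{O}_Z)=0$, and the Leray spectral sequence for $g$ yields $H^1(\mathcal{O}_Y)=0$.

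The heart of the proof is to handle this last case by producing a positive-dimensional family of $D$-stabilized curves covering $X$. By the effective very ampleness result of Theorem~\ref{sec1-prop-8} (applicable since the minimal resolution of $X$ is of general type, the rational double points are $F$-pure, and the index of $K_X$ is coprime to the enormous $p$), fix a very ample $D$-linear divisor $A$; its degree is recorded by the constant $a=13+45mN$, where $m=u^{43K_X^2+7n+72}$ bounds the indices of the surfaces arising after the required blow-ups, and $N$ — built from $m_0=2^{17K_X^2+37}-1$ and the successive blow-up estimates of Corollaries~\ref{sec1-cor-12} and~\ref{sec1-cor-13} — bounds the corresponding ample model. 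Since $D^\ast$ is diagonalizable on $H^0(X,\mathcal{O}_X(A))$ (Proposition~\ref{sec1-prop1}), $|A|$ has a basis of $D$-stabilized members; as $|A|$ is base point free and high-dimensional, one can route enough of them through a prescribed point to realize the configurations of Corollary~\ref{sec1-cor-6} (three integral curves of $D$ through a point, plus a connecting chain of integral curves). Using Propositions~\ref{sec2-prop-3},~\ref{sec2-prop-5},~\ref{sec2-prop-10} and the fixed-point analysis of Proposition~\ref{sec1-prop-2}, the obstructive configurations — three concurrent integral curves, or two integral curves meeting twice — already force $X$ birationally ruled (contradiction), so these may be assumed absent, which is exactly what is needed to steer the construction to a reduced curve $B=C_1+\cdots+C_r$, each $C_i$ an integral curve of $D$, with $h^1(\mathcal{O}_{\hat B})\ge 2$ for $\hat B=\pi(B)$ (note $\pi$ is a homeomorphism, so $h^1$ and the hypotheses of Corollary~\ref{sec1-cor-6} are preserved). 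Then $0\to\mathcal{O}_Y(-\hat B)\to\mathcal{O}_Y\to\mathcal{O}_{\hat B}\to 0$, $H^1(\mathcal{O}_Y)=0$, and Serre duality give $h^0(\mathcal{O}_Y(K_Y+\hat B))\ge 2$; pulling back via $\pi$ and using the adjunction formula for $\pi$ — where the index estimates force the non-reduced correction terms to vanish — produces a positive-dimensional subsystem $|V|\subset|K_X+B|$ of curves stabilized by $D$.

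To conclude I would apply Proposition~\ref{sec2-prop-2} with $L=K_X+B$ and with $A$, $B$ large multiples of the very ample $D$-linear divisor above: conditions (1)--(5) there are formal ((1) is the subsystem just constructed, (2) is the lifting established above, (3)--(5) hold because $K_X$ is ample and $|A|$ is base point free and $D$-linear), while conditions (6) and (7), namely $L\cdot B<p$ and $(A+K_X)\cdot L+(A\cdot L)^2/A^2<p-3$, are exactly what the displayed inequality secures — after the index bound $d$, the genus bound $n=72d(72d+1)K_X^2+2$ for curves with many components (Proposition~\ref{sec1-prop-5}), and $b=1+18d$, $c=4K_X^2$, $a$, $N$, the relevant intersection numbers are bounded by $c(1+ma)(b+ma)$ and $\frac{1}{2}(\,\cdots)$ exactly as in the statement, so the hypothesis $p>14c(1+ma)(b+ma)+\left(\frac{14}{32}ac(b+ma)\right)^2+3$ implies both. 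Thus Proposition~\ref{sec2-prop-2} gives that $X$ is birationally ruled — the desired contradiction, so no nontrivial $\mu_p$-action exists. The assertion on $\mathrm{Aut}(X)$ follows: the identity component $\mathrm{Aut}^0(X)$ is a finite connected group scheme, and having excluded $\mu_p$ as a subgroup scheme, its composition factors are all $\cong\alpha_p$ (it is trivial if there are none). The main obstacle is the third paragraph: organizing the combinatorics of $D$-stabilized curves so that in every case one obtains either a covering family of such curves or one of the excluded configurations, while keeping every intersection number within the polynomial bounds that eventually collapse to the displayed inequality.
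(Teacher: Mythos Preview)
Your outline tracks the paper closely through the $\kappa(Z)\ge 1$ cases and the reduction to $H^1(\mathcal{O}_Y)=0$, but the endgame diverges from the paper and, as you yourself flag, contains a genuine gap.

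The circularity is here: you first invoke Propositions~\ref{sec2-prop-3} and~\ref{sec2-prop-5} to exclude ``three concurrent integral curves'' and ``two integral curves meeting twice,'' and then claim this absence lets you \emph{build} a reduced curve $B$ with $h^1(\mathcal{O}_{\hat B})\ge 2$. But your only tool for $h^1\ge 2$ is Corollary~\ref{sec1-cor-6}, and its hypothesis is precisely three integral curves through a point --- exactly what you just excluded. So in the residual case (no bad configurations) your argument produces no contradiction at all. A secondary issue: on $X$ itself $K_X$ is $D$-linear only when $\Delta=0$, so you have no evident source of a very ample $D$-linear divisor in the general case; Theorem~\ref{sec1-prop-8} alone does not supply one.

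The paper closes both gaps by a different mechanism. It works not on $X$ or on the quotient, but on the minimal resolution $Y$ of $X$; it then \emph{contracts the divisorial part} $\Delta_Y$ of $D_Y$ via $f\colon Y\to Z$ (Lemma~\ref{sec3-claim-1}), so that $D_Z$ has only isolated fixed points and hence $K_Z$ is automatically $D_Z$-linear and ample. Now $A_Z=18d_zK_Z$ is base-point-free and $D_Z$-linear, and one picks a $D_Z$-stable member $\hat C\in|A_Z|$ avoiding the (at most one) elliptic singularity. Pulling back $C=f^\ast\hat C$ to $Y$, the role of Propositions~\ref{sec2-prop-3},~\ref{sec2-prop-5},~\ref{sec2-prop-10} is not to manufacture a pencil but to force the \emph{dual graph} of $C_{\mathrm{red}}$: they show every component of $C$ is a smooth $\mathbb{P}^1$, any two meet transversally in at most one point, no three are concurrent, and each meets at most two others (Proposition~\ref{sec3-claim-2}). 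Hence $C_{\mathrm{red}}$ is a chain or a cycle of curves each with self-intersection $\le -2$ on $Y$, and Lemma~\ref{sec1-prop-9} gives $C^2\le 0$ --- contradicting $C^2=A_Z^2>0$. The final contradiction is thus intersection-theoretic, not ``$X$ is birationally ruled''; the constants $a,b,c,d,m,n,N,m_0$ arise from making Propositions~\ref{sec2-prop-3},~\ref{sec2-prop-5} applicable on the auxiliary blow-ups needed to establish Proposition~\ref{sec3-claim-2}, not from a terminal application of Proposition~\ref{sec2-prop-2}.
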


\begin{proof}

The idea of the proof is the following. Suppose that there exists a nontrivial $\mu_p$ action on $X$. Then there exists a nontrivial vector field $D$ on $X$ such that $D^p=D$~\cite{Tz17b}. Let $h \colon Y \rightarrow X$ be the minimal resolution of $X$ and $F_i$, $i=1,\ldots, \gamma$ be the $h$-exceptional curves. Then since $X$ has canonical singularities, $K_Y=h^{\ast}K_X$ and hence $Y$ is a minimal surface of general type. Moreover, the exceptional set of $h$ consists of configurations of smooth rational curves of type $A_n$, $D_m$, $E_6$, $E_7$ and $E_8$, for various $n,m$. In particular the exceptional set $h$ consists of curves with self intersection -2 which, if they intersect, they intersect transversally.

I will show that under the conditions of the theorem, $D$ lifts to a vector field $D_Y$ on $Y$ and there exists a nef and big $D_Y$-linear invertible sheaf $A$ on $Y$ and a member $C=\sum_in_iC_i\in |A|$ such that $C_{red}$ is either a chain or a cycle and $C_i^2\leq -2$, for all $i$. Then by Lemma~\ref{sec1-prop-9}, $C^2 \leq 0$, which is a contradiction since $A$ is nef and big.

The proof of Theorem~\ref{sec3-th-1} is rather long. For this reason,  and in order to make it easier for the reader to follow the arguments of the proof, I will next state without proof the auxiliary results needed in the proof and then prove the theorem by assuming them. The proofs of Lemmas~\ref{sec3-lemma-10},~\ref{sec3-claim-1} will be given in Section~\ref{sec-4} and  the proof of Proposition~\ref{sec3-claim-2} will be given in Section~\ref{sec-5}.

\begin{lemma}\label{sec3-lemma-10}
$D$ lifts to a vector field $D_Y$ on the minimal resolution $Y$ of $X$. Moreover, every $h$-exceptional curve is stabilized by $D_Y$. 
\end{lemma}

Let $\Delta=\sum_{i=1}^m \Delta_i$, be the decomposition of the divisorial part $\Delta$ of $D$ in its irreducible components. The conditions of the theorem imply in particular that $p>156K_X^2+3$. Then by~\cite[Proposition 3.14]{Tz19}, 

\begin{gather}\label{sec3-eq-1}
K_X\cdot \Delta \leq \mathrm{max}\{3K_X^2,4\}\\\nonumber
\Delta^2\leq \mathrm{max}\{9K_X^2,16\}.
\end{gather}

(The numbers 4 and 16 in the above inequalities appear only in order to treat the case when $K_X^2=1$)
and by Proposition~\ref{sec2-prop-6}, $\mathrm{p}_a(\Delta_i) \leq 1$, for $i=1,\ldots, m$ and there exists at most one $i$ such that $\mathrm{p}_a(\Delta_i)=1$. 

I will only consider the case when $K_X^2\geq 2$. The case when $K_X^2=1$ is identical with the only difference that the inequality in (\ref{sec3-eq-1}) that corresponds to this case must be used. So from now on assume that $K_X^2\geq 2$. 

Let $\Delta_Y$ be the divisorial part of $D_Y$. Then, 
\[
\Delta_Y=\sum_{i=1}^m \tilde{\Delta}_i +\sum_{j=1}^r\Gamma_j,
\]
where $\tilde{\Delta}_i$ is the birational transform of $\Delta_i$ in $Y$, $i=1,\ldots, m$, and $\Gamma_j$ are $h$-exceptional curves, $j=1,\ldots, r$. Hence $\Gamma_j\cong \mathbb{P}^1$ and $\Gamma_j^2=-2$, $j=1,\ldots, r$. Moreover, by~\cite{RS76}, $\Delta_Y$ is smooth. Therefore its irreducible components are disjoint smooth curves. Hence $\tilde{\Delta}_i$ is a smooth curve for every $i=1,\ldots, m$,  $\tilde{\Delta}_i \cdot \tilde{\Delta}_j =0$, for all $i\not= j$, $\tilde{\Delta}_i \cdot \Gamma_j=0$ and $\Gamma_i \cdot \Gamma_j=0$, for all $i\not=j$. Also, since $\mathrm{p}_a(\tilde{\Delta}_i) \leq \mathrm{p}_a(\Delta_i)$, it follows that  $\mathrm{p}_a(\tilde{\Delta}_i) \leq 1$, for $i=1,\ldots, m$ and there exists at most one $i$ such that $\mathrm{p}_a(\tilde{\Delta}_i)=1$.  Furthermore,
\[
\tilde{\Delta}_i^2=2\mathrm{p}_a(\tilde{\Delta}_i)-2-K_Y\cdot \tilde{\Delta}_i=2\mathrm{p}_a(\tilde{\Delta}_i)-2-h^{\ast}K_X\cdot \tilde{\Delta}_i=2\mathrm{p}_a(\tilde{\Delta}_i)-2-K_X\cdot \Delta_i.
\]
Therefore, since $K_X$ is ample, if $\mathrm{p}_a(\tilde{\Delta}_i)=0$ then $\tilde{\Delta}_i^2\leq -3$ and if $\mathrm{p}_a(\tilde{\Delta}_i)=1$, then $\tilde{\Delta}_i^2\leq -1$. 

\begin{lemma}\label{sec3-claim-1} 
There exists a birational map $f \colon Y \rightarrow Z$ such that
\begin{enumerate}
\item $Z$ is normal and its singularities are log terminal surface singularities and at most one simple elliptic Gorenstein singularity.
\item $K_Z$ is ample of index $d_z \leq u^{9K_X^2}$, where $u=e^{1/e}$.
\item $K_Z^2\leq 4K_X^2$.
\item The exceptional set of $f$ consists of $\Delta_Y$ and smooth rational curves of self intersection -2 contracted by $g$ and disjoint from $\Delta_Y$.
\item The vector field $D_Z$ on $Z$ induced by $D_Y$ has only isolated singularities and $K_Z$ is $D_Z$-linear.
\end{enumerate}
\end{lemma}

Let now $A_Z=\mathcal{O}_Z(18d_zK_Z)$. Then $A_Z$ is an ample $D_z$-linear invertible sheaf and moreover by Theorem~\ref{sec1-prop-8},  $ |A_Z|$ is base point free. Then by Proposition~\ref{sec1-prop1}, $ |A_Z|$ has a basis consisting of curves stabilized by $D_z$. Therefore there exists a member $\hat{C}\in |A_Z|$ such that $\hat{C}$ is stabilized by $D_z$ and $Q$, the unique simple elliptic singularity of $Z$, is not in the support of $\hat{C}$. Let $\hat{C}=\sum_{i=1}^s n_i \hat{C}_i$ be the decomposition of $\hat{C}$ into its irreducible components. Note that since $Q\not \in \hat{C}_i$, for all $i=1,\ldots, s$, $\hat{C}_i$ can pass only through quotient log terminal singularities and hence since such singularities are $\mathbb{Q}$-factorial, $\hat{C}_i$ is $\mathbb{Q}$-Cartier, for all $i$. Note that $Q \in Z$ is simple elliptic and such a singularity may not be $\mathbb{Q}$-factorial.

Let now $A=f^{\ast} A_Z$ and $C=f^{\ast}\hat{C}$. Then
\begin{gather}\label{sec3-eq-1000}
C=\sum_{i=1}^s n_i C_i +\sum_{j=1}^{m-1} \nu_j \tilde{\Delta}_j+\sum_{i=1}^r \Gamma_i+\sum_{j=1}^{r^{\prime}} \nu_j^{\prime} F_j,
\end{gather}
where $F_j$ is $h$-exceptional such that $\Delta_Y \cdot F_j=0$, $j=1,\ldots, r^{\prime}$. $C_i=f_{\ast}^{-1}\hat{C}_i$, $i=1,\ldots, s$ and $\nu_i, \nu^{\prime}_j\geq 0$, for all $i,j$. Since every $h$-exceptional curve is stabilized by $D_Y$, every irreducible component of $C$ is also stabilized by $D_Y$. Note that since $\hat{C}$ is chosen so that it does not go through the unique elliptic singularity $Q\in Z$, which corresponds to the contraction of $\tilde{\Delta}_m$, only $\tilde{\Delta}_j$, $1\leq j \leq m-1$, may appear as a component of $C$. 

\begin{proposition}\label{sec3-claim-2}
$C_{red}$ is either a chain or a cycle. Moreover,
\begin{enumerate}
\item $C$ is connected.
\item $C_i \cong \mathbb{P}^1$, for all $i=1,\ldots, s$.
\item If two components of $C$ intersect then they intersect transversally.
\item No three components of $C$ pass through the same point.
\item Any component of $C$ intersects at most 2 other components.
\item Two components of $C$ intersect at at most one point.
\end{enumerate}
\end{proposition}

\textbf{Proof of Theorem~\ref{sec3-th-1}}
Since $C_i \cong \mathbb{P}^1$, $i=1,\ldots, s$  and $K_Y$ is nef and big, it follows that $C_i^2=-2-K_X \cdot C_i\leq -2$. Moreover, as has been shown earlier, $\tilde{\Delta}_i \cong \mathbb{P}^1$ and   $\tilde{\Delta}_i^2 \leq -3$, $i=1,\ldots, m-1$. Finally, by the description of the exceptional set of $f$, $F_j \cong \mathbb{P}^1$ and $F_j^2=-2$, $j=1,\ldots, r^{\prime}$. Then, since $C$ is either a chain or a cycle it follows from Lemma~\ref{sec1-prop-9}, that $C^2\leq 0$. But this is impossible since $C^2=\hat{C}^2>0$, since $\hat{C}\in |18d_zK_Z|$ and $K_Z$ is ample.

\end{proof}
\section{Proofs of Lemma~\ref{sec3-lemma-10} and Lemma~\ref{sec3-claim-1}}\label{sec-4}
\subsection{Proof of Lemma~\ref{sec3-lemma-10}}

The conditions of Theorem~\ref{sec3-th-1} imply in particular that $17K_X^2+37<p$. Also by Noether's inequality, $2\chi(\mathcal{O}_X)\leq K_X^2+6$. Therefore,
\[
12\chi(\mathcal{O}_X)+11K_X^2+1\leq 17K_X^2+37<p,
\]
and hence by~\cite[Theorem 3.2]{Tz19},
\begin{gather}\label{sec3-eq-31}
\gamma\leq 12\chi(\mathcal{O}_X)+11K_X^2\leq 17K_X^2+36
\end{gather}
and  by~\cite[Theorem 3.3]{Tz19} $D$ lifts to a vector field $D_Y$ on the minimal resolution $Y$ of $X$ and every $h$-exceptional curve is stabilized by $D_Y$.

\subsection{Proof of Lemma~\ref{sec3-claim-1}}

As has been proved in Section~\ref{sec3} there exists at most one index $j$ such that $\tilde{\Delta}_j$ is an elliptic curve. For all other $i\not=j$, $\tilde{\Delta}_i \cong \mathbb{P}^1$ and $\tilde{\Delta}_i^2\leq -3$, for all $i=1,\ldots, m$.  After a renumbering of the curves we may assume that $\tilde{\Delta}_i\cong \mathbb{P}^1$, $i=1,\ldots, m-1$, and $\tilde{\Delta}_m$ is either $\mathbb{P}^1$ or an elliptic curve. In addition all other components of $\Delta_Y$ are disjoint smooth rational curves  of self intersection $-2$. Let us assume that $\tilde{\Delta}_m$ is an elliptic curve. If this is not true and hence every component of $\Delta_Y$ is a smooth rational curve, then the proof of the claim is contained in the proof of the case when $\tilde{\Delta}_m$ is elliptic.

Then by~\cite{Art62}, there exists a birational morphism $f^{\prime} \colon Y\rightarrow Z^{\prime}$ whose exceptional set is exactly $\tilde{\Delta}_i$, $i=1,\ldots, m-1$. Let $P_i=f^{\prime}(\tilde{\Delta}_i)$, $i=1,\ldots, m-1$. Then $P_i$ are log terminal quotient singularities. In particular $K_{Z^{\prime}}$ is $\mathbb{Q}$-Cartier. I will show that $K_{Z^{\prime}}$ is nef and big. Indeed. Suppose that $\tilde{\Delta}_i^2=-d_i$. Then by the previous discussion, $d_i \geq 3$, if $i\leq m-1$, and $d_m\geq 1$. Then a straightforward calculation shows that
\begin{gather}\label{sec3-eq-3}
K_Y+\sum_{i=1}^{m-1}\frac{d_i-2}{d_i}\tilde{\Delta}_i=(f^{\prime})^{\ast}K_{Z^{\prime}},
\end{gather}
Then, since $\tilde{\Delta}_i\cdot \tilde{\Delta}_j=0$, for $i\not= j$,
\begin{gather}\label{sec3-eq-1000}
K_{Z^{\prime}}^2=K_Y^2-\sum_{i=1}^{m-1}(\frac{d_i-2}{d_i})^2\tilde{\Delta}^2_i =K_X^2+\sum_{i=1}^{m-1}\frac{(d_i-2)^2}{d_i} >0,
\end{gather}
since $K_X^2>0$. Let $C^{\prime}$ be a curve in $Z^{\prime}$ and $C$ its birational transform in $Y$. Then since $K_Y$ is ample, $K_Y \cdot C \geq 0$ and hence from the equation (\ref{sec3-eq-3}) it follows that
\[
C^{\prime} \cdot K_{Z^{\prime}}=C\cdot (f^{\prime})^{\ast}K_{Z^{\prime}}=K_Y\cdot C+\sum_{i=1}^{m-1}\frac{d_i-2}{d_i}\tilde{\Delta}_i \cdot C\geq 0,
\]
since $K_Y$ is nef and $d_i \geq 3$. Therefore, $K_{Z^{\prime}}$ is nef and big. Moreover, $K_{Z^{\prime}}\cdot C^{\prime}=0$ if and only if $C\cdot \tilde{\Delta}_i=0$, $i\leq m-1$, and $K_Y\cdot C=0$. Therefore, since $Y$ is a minimal surface of general type, $C\cong \mathbb{P}^1$ and $C^2=-2$. Moreover, since $C\cdot \tilde{\Delta}_i=0$, $i\leq m-1$, it follows that $C^{\prime}$ is in the smooth part of $Z^{\prime}$ and hence $C \cong C^{\prime} \cong \mathbb{P}^1$ and $(C^{\prime})^2=-2$.

Now since $\tilde{\Delta}_m$ is disjoint from $\tilde{\Delta}_i$, $i\not= m$,  $\Delta^{\prime}_m=f^{\prime}(\tilde{\Delta}_m)$ is contained in the smooth part of $Z^{\prime}$. Then the conditions of  Proposition~\ref{sec1-prop-7} are satisfied and hence there exists a birational map $f^{\prime\prime} \colon Z^{\prime} \rightarrow Z$ which contracts $\Delta^{\prime}_m$ to a Gorenstein simple elliptic singularity $Q\in Z$. Let $f\colon Y \rightarrow Z$ be the composition of $f^{\prime}$ and $f^{\prime\prime}$. I will show that $f$ is the map that satisfies the conditions of the lemma.

Let $F$ be an $f^{\prime\prime}$-exceptional curve different that $\Delta_m^{\prime}$. Then, since by the proof of Proposition~\ref{sec1-prop-7} $f^{\prime\prime}$ is defined by $|m(K_{Z^{\prime}}+\Delta_m^{\prime})|$, for some $m>0$, $F$ is $f^{\prime\prime}$-exceptional, if and only if $F \cdot K_{Z^{\prime}}=F\cdot \tilde{\Delta}_m=0$. Hence, from the previous discussion, $F\cong \mathbb{P}^1$, $F$ is in the smooth part of $Z^{\prime}$ and $F^2=-2$. Therefore the singularities of $Z$ are the log terminal quotient singularities coming from $Z^{\prime}$, a simple elliptic singularity obtained by contracting $\Delta_m^{\prime}$ and rational double points corresponding to the contraction of the other $f^{\prime\prime}$-exceptional curves. In particular,  by~\cite{Ha98},~\cite{MS91}, all these singularities are F-pure. Finally notice that every component of $\Delta_Y$ is contracted by $f$ since the curves $\tilde{\Delta}_i$, $i\leq m-1$, are contracted by $f^{\prime}$, $\tilde{\Delta}_m$ is contracted by $f^{\prime\prime}$ and 
$\Gamma_j$ are smooth rational curves of self intersection -2 disjoint from $\tilde{\Delta}_m$ and hence from the previous description of the exceptional set of $f^{\prime\prime}$, they are contracted by $f^{\prime\prime}$. This proves part  (4) of the Lemma.

Next I will show that $K_Z$ is ample. Since $f^{\prime\prime}$ contracts $\Delta^{\prime}_m$ and smooth rational curves of self intersection $-2$ contained in the smooth part of $Z^{\prime}$ and disjoint from $\Delta_m^{\prime}$, the following adjunction formula holds
\begin{gather}\label{sec3-eq-1001}
K_{Z^{\prime}}+\Delta^{\prime}_m=(f^{\prime\prime})^{\ast}K_Z.
\end{gather}
Then, $K_Z^2=K_{Z^{\prime}}^2-(\Delta^{\prime}_m)^2>0$. Moreover, let $C \subset Z$ be an curve, Then,
\[
K_Z\cdot C=(f^{\prime\prime})^{\ast}K_Z \cdot C^{\prime}=K_{Z^{\prime}}\cdot C^{\prime} +\Delta^{\prime}_m\cdot C^{\prime},
\]
where $C^{\prime}$ is the birational transform of $C$ in $Z^{\prime}$. Since $K_{Z^{\prime}}$ is nef, $K_Z \cdot C\geq 0$.  Suppose that $K_Z \cdot C =0$. Then from the previous equation it follows that 
$K_{Z^{\prime}}\cdot C^{\prime}=0$ and $C^{\prime} \cdot \Delta^{\prime}_m=0$. But it has been shown that every such curve is $f^{\prime\prime}$-exceptional, which is not the case with $C^{\prime}$. Hence $K_Z\cdot C>0$, for every $C$. Hence from Kleiman's criterion, $K_Z$ is ample.

Let $d_z$ be the index of $Z$. Since $Z^{\prime}_m$ is contained in the smooth part of $Z^{\prime}$ and $f^{\prime\prime}$ contracts $\Delta^{\prime}_m$ to a simple elliptic singularity, in particular Gorenstein, and the equation (\ref{sec3-eq-1001}) it follows that $d_z$ is the same as the index of $K_{Z^{\prime}}$. Then from the  equation (\ref{sec3-eq-3}) it follows that $d_z$ is less or equal to the least common multiple of $d_1, \ldots, d_{m-1}$. Now from the equations (\ref{sec3-eq-1}) it follows that
\begin{gather}\label{sec3-eq-4}
\sum_{i=1}^{m-2}K_Y \cdot \tilde{\Delta}_i=\sum_{i=1}^{m-2}h^{\ast}K_X \cdot \tilde{\Delta}_i=\sum_{i=1}^{m-1}K_X \cdot \Delta_i \leq \sum_{i=1}^{m}K_X \cdot \Delta_i=K_X\cdot \Delta  \leq 3K_X^2.
\end{gather}
By adjunction, $K_Y\cdot \tilde{\Delta}_i=-2-\tilde{\Delta}_i^2=-2+d_i$. Moreover, from the previous equation, since $K_X$ is ample,  it follows that $m-1 \leq 3K_X^2$. Then the equation (\ref{sec3-eq-4}) gives
\begin{gather}\label{sec3-eq-100}
\sum_{i=1}^{m-1}d_i \leq 2(m-1)+3K_X^2\leq 6K_X^2+3K_X^2=9K_X^2.
\end{gather}
Now according to Proposition~\ref{sec1-prop-10} it follows that 
\[
d_z\leq e^{9K_X^2/e}=u^{9K_X^2},
\]
for $u=e^{1/e}$, as claimed. This proves part (2) of the Lemma.

I will next show part (3) of the Lemma. From the equations (\ref{sec3-eq-1001}), (\ref{sec3-eq-3}) and (\ref{sec3-eq-1}) it follows that
\begin{gather*}
K_Z^2=K_{Z^{\prime}}^2+K_{Z^{\prime}}\cdot \Delta^{\prime}_m=K_Y^2+\sum_{i=1}^{m-1}\frac{d_i-2}{d_i}K_Y\cdot \tilde{\Delta}_i+K_Y\cdot \tilde{\Delta}_m\leq K_Y^2+K_Y\cdot \Delta_Y\leq 4K_X^2.
\end{gather*}
This proves part (3) of the lemma. It remains to prove part (5). Since $f$ is birational, $D_Y$ induces a vector field $D_Z$ on $Z$. Since $D^p=D$, then $D_Z^p=D_Z$. Moreover, since $f$ contracts the divisorial part of $D_Y$, $D_Z$ has only isolated singularities. Let then $\nu \colon Z\rightarrow \hat{Z}$ be the quotient of $Z$ by the $\mu_p$-action induced by $D_Z$. Then since $D_Z$ has no divisorial part, $K_Z=\nu^{\ast}K_{\hat{Z}}$. Therefore $K_Z$ is a $D_Z$-linear rank 1 reflexive sheaf on $Z$. This concludes the proof of Lemma~\ref{sec3-claim-1}.

\section{Proof of Proposition~\ref{sec3-claim-2}}\label{sec-5}

\subsection{Proof of Proposition~\ref{sec3-claim-2}.1} 

$A_Z$ is ample and $\hat{C}\in |A_Z|$. Therefore, by~\cite[Corollary 7.9]{Ha77}, $\hat{C}$ is connected. Now considering that $f$ is birational with connected fibers, it follows easily that $C=f^{\ast}\hat{C}$ is also connected.

\subsection{Proof of Proposition~\ref{sec3-claim-2}.2} In this step I will show that every component  $C_i$, $i=1,\ldots, s$, of $C$ is smooth, and in fact isomorphic to $\mathbb{P}^1$. This is the hardest part of the proof and contains all the methods needed for the proof of the remaining parts of Proposition~\ref{sec3-claim-2}, whose proofs are much simpler.

The next two lemmas are essential for the proof that $C_i\cong \mathbb{P}^1$. 

\begin{lemma}\label{sec3-lemma-1}
With notations as above. Suppose that 
\[
(72^2d_z^2+72d_z)K_X^2<p-3.
\]
Then
\begin{enumerate}
\item $C_i$ is stabilized by $D_Y$ for all $i=1,\ldots, s$ and $D_Y|_{C_i}\not=0$, i.e., $C_i$ is not contained in the divisorial part of $D_Y$.
\item Every point of intersection of $C_i$ and $C_j$, $i\not= j$, are fixed points of $D_Y$, $1\leq i,j\leq s$.
\item Let $\nu \colon \bar{C}_i\rightarrow C_i$ be the normalization of $C_i$, $i=1,\ldots, s$. Then $\bar{C}_i \cong \mathbb{P}^1_k$. Moreover, let $P \in C_i$ be a singular point. Then $P$ is a fixed point of $D_Y$ and $\nu^{-1}(P)_{red}$ is a single point.
\item $D_Y$ has at most two fixed points on $C_i$.
\end{enumerate}
\end{lemma}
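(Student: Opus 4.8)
The plan is to derive the four assertions from the single numerical hypothesis of the lemma together with the $D_Y$-linearity of $A=f^{\ast}A_Z$ and the results of~\cite{Tz19} on the restriction of a vector field to a curve. I would first record the basic estimates. Since $A_Z=\mathcal{O}_Z(18d_zK_Z)$ and $K_Z^2\le 4K_X^2$ (Lemma~\ref{sec3-claim-1}), we have $A^2=A_Z^2=324 d_z^2 K_Z^2\le 1296 d_z^2 K_X^2$; and since the discrepancy formulas for $f$ established in the proof of Lemma~\ref{sec3-claim-1} show $f^{\ast}K_Z-K_Y$ is effective, $A\cdot K_Y\le A_Z\cdot K_Z=18d_zK_Z^2\le 72d_zK_X^2$. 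As $K_X^2\ge 1$, these bounds and the hypothesis $(72^2d_z^2+72d_z)K_X^2<p-3$ give simultaneously
\[
A^2<p,\qquad 3A^2+A\cdot K_Y<p,\qquad 3+2A^2+A\cdot K_Y<p.
\]
Moreover $A$ is a nef and big Cartier divisor on the smooth surface $Y$, $A+K_Y$ is nef, and $C=f^{\ast}\hat C\in|A|$ is stabilized by $D_Y$.

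For part (1): each component $C_i$ ($i\le s$) has $A\cdot C_i=A_Z\cdot\hat C_i>0$ (as $A_Z$ is ample and $\hat C_i$ a curve), so its multiplicity satisfies $n_i\le A\cdot C=A^2<p$, whence $C_i$ is stabilized by $D_Y$ by~\cite[Proposition 3.2]{Tz19}; since $C_i$ is the birational transform of the non-$f$-exceptional curve $\hat C_i$, it is not a component of the divisorial part $\Delta_Y$, which by Lemma~\ref{sec3-claim-1} is supported on $f$-exceptional curves, so $D_Y$ restricts to a nonzero field on $C_i$. For part (2): apply Proposition~\ref{sec1-prop-2} on the smooth surface $Y$, where every curve is Cartier, so the index $d$ there equals $1$; with $K_Y$ nef and big (case (B)) and $L=K_Y$, conditions (C) and (D) follow from the displayed inequalities. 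If $K_Y\cdot C_i\neq 0$ or $K_Y\cdot C_j\neq 0$, Proposition~\ref{sec1-prop-2} then gives that every point of $C_i\cap C_j$ is a fixed point of $D_Y$. If $K_Y\cdot C_i=K_Y\cdot C_j=0$, then, since $K_Y=h^{\ast}K_X$ with $K_X$ ample, both $C_i$ and $C_j$ are $h$-exceptional $(-2)$-curves; hence any intersection point $P$ is reduced and the intersection transverse, so locally $I_{C_i}+I_{C_j}=\mathfrak{m}_P$ and $D_Y(\mathfrak{m}_P)=D_Y(I_{C_i}+I_{C_j})\subseteq I_{C_i}+I_{C_j}=\mathfrak{m}_P$, i.e. $P$ is a fixed point.

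For parts (3) and (4): by adjunction and the Hodge Index Theorem for the nef and big divisor $A$ (cf. the estimate~(\ref{sec2-eq-4})) one gets $2p_a(C_i)+1\le 3+A^2+(A+K_Y)\cdot A=3+2A^2+A\cdot K_Y<p$, so $p_a(C_i)<(p-1)/2$. By~\cite[Proposition 4.7]{Tz19}, $D_Y$ then fixes every singular point of $C_i$ with $\nu^{-1}(P)_{\mathrm{red}}$ a single point, and lifts to a vector field $\bar D$ on the normalization $\bar C_i$, which is nonzero because $D_Y|_{C_i}\neq 0$; hence $\bar C_i\cong\mathbb{P}^1$ or $\bar C_i$ is a smooth elliptic curve. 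To exclude the elliptic case I would show $D_Y$ has a fixed point on $C_i$: otherwise, on a neighbourhood of $C_i$ the quotient $\pi'$ by the $\mu_p$-action is a torsor, $C_i=\pi'^{\ast}\tilde C_i$ with $\tilde C_i$ Cartier, and the $D_Y$-linearity of $A=f^{\ast}A_Z$ forces $A\cdot C_i$ to be a positive multiple of $p$, contradicting $A\cdot C_i\le A^2<p$; then~\cite[Lemma 3.15]{Tz19} yields a fixed point of $\bar D$ on $\bar C_i$, ruling out the elliptic case, so $\bar C_i\cong\mathbb{P}^1$, and~\cite[Corollary 3.8]{Tz19} gives that $D_Y$ has at most two fixed points on $C_i$.

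I expect the main obstacle to be twofold. First, one must confirm that $A=f^{\ast}A_Z$ is $D_Y$-linear, i.e. that $D_Z$-linearity on $Z$ pulls back to $D_Y$-linearity on $Y$ across the birational contraction $f$ of the divisorial part of $D_Y$ — this is the counterpart, in the reverse direction, of the pull-back-to-the-minimal-resolution statement used in the proof of Proposition~\ref{sec2-prop-2}, and it is what powers the ``fixed points exist'' argument in part (3). Second, one must carry out the elementary but delicate numerical comparisons so that the single hypothesis really does imply $A^2<p$, $3A^2+A\cdot K_Y<p$ and $2p_a(C_i)+1<p$ at once, and keep track of how fixed points of $D_Y$ on a singular $C_i$ match fixed points of $\bar D$ on $\bar C_i$, which is precisely the point where~\cite[Proposition 4.7]{Tz19} and~\cite[Corollary 3.8]{Tz19} are genuinely needed.
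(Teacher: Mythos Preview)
Your overall strategy matches the paper's: bound $n_i$ to get each $C_i$ stabilized, invoke Proposition~\ref{sec1-prop-2} on the smooth surface $Y$ (index $d=1$) for part~(2), use the genus bound $p_a(C_i)<(p-1)/2$ to lift $D_Y$ to the normalization, and then exclude the elliptic case by producing a fixed point. Your use of $A$ rather than $K_Y$ to control $n_i$ and $p_a(C_i)$ is a harmless variant, and your torsor/$D_Y$-linearity argument for producing a fixed point on $C_i$ (hence ruling out $\bar C_i$ elliptic) is a legitimate alternative to the paper's device of picking another stabilized member $C'\in|A|$; as you note, the $D_Y$-linearity of $A=f^{\ast}A_Z$ is what has to be checked, and it follows because $f$ is $\mu_p$-equivariant, so $\nu\circ f$ factors through the quotient $Y\to\hat Y$.

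There is, however, a genuine gap in part~(3): the assertion that $\nu^{-1}(P)_{\mathrm{red}}$ is a single point does \emph{not} come for free from \cite[Proposition~4.7]{Tz19}. That proposition (together with Corollary~4.8) gives you that singular points of $C_i$ are fixed by $D_Y$ and that $D_Y$ lifts to $\bar C_i$, but it does not control the number of branches over a singular point. The paper closes this with an extra step your torsor argument cannot replace: if $\nu^{-1}(P)$ had two reduced points, these would already account for both fixed points of $\bar D$ on $\bar C_i\cong\mathbb{P}^1$; one then chooses, via base-point-freeness and the $D_Y$-stabilized basis of $|A|$, a member $C'\in|A|$ \emph{avoiding} $P$, so that $C'\cap C_i$ yields a fixed point $Q\neq P$ of $D_Y$ on $C_i$, whose preimage gives a third fixed point of $\bar D$---a contradiction. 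Your torsor argument only guarantees \emph{some} fixed point on $C_i$, which could well be $P$ itself, so it does not produce the needed $Q\neq P$. You should either import the paper's $C'$-through-a-chosen-point trick (which works once you know $|A_Z|$, hence $|A|$, is base point free with a stabilized basis), or supply another mechanism forcing a fixed point of $D_Y$ on $C_i$ away from $P$.
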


\begin{proof}
Suppose that $K_Y \cdot C_i =0$, for some $i$. Then $C_i$ is $h$-exceptional and therefore it is stabilized by $D_Y$ since every $h$-exceptional curve is stabilized by $D_Y$. Suppose that $K_Y \cdot C_i >0$. Then, 
\[
n_i<n_i(C_i\cdot K_Y) \leq A \cdot K_Y= f^{\ast}A_Z \cdot K_Y =A_Z\cdot K_Z=18d_zK_Z^2 <4\cdot 18d_zK_X^2=72d_zK_X^2,
\]
since according to Lemma~\ref{sec3-claim-1}, $K_Z^2\leq 4K_X^2$.  Then from the previous inequality and the assumptions of the lemma it follows that $n_i<p$. Hence, from~\cite[Proposition 4.2]{Tz19} it follows that $C_i$ is stabilized by $D_Y$. Moreover, since $C_i=f_{\ast}^{-1}\hat{C}_i$ and $D_Z$ has only isolated singularities, it follows that $C_i$ is not contained in the divisorial part  of $D_Y$ and hence the restriction of $D_Y$ on $C_i$ is not zero. This concludes the proof of part (1) of the lemma.

Let now $C_i$, $C_j$ be two distinct components of $C$. Suppose that $K_Y\cdot C_i=K_Y\cdot C_j=0$. Then $C_i$ and $C_j$ are $h$-exceptional. Hence since $X$ has canonical singularities, and from the description of the resolutions of such singularities, if $C_i$ and $C_j$ intersect, then  $C_i \cdot C_j=1$. Therefore, since $C_i$ and $C_j$ are stabilized by $D_Y$ by part (1), from~\cite[Corollary 4.5]{Tz19} it follows that the point of intersection of $C_i$ and $C_j$ is a fixed point of $D_Y$. Suppose that $K_Y \cdot C_i>0$ or $K_Y \cdot C_j>$. Then,
\[
3A^2+K_Y \cdot A=3\cdot 18^2d_z^2K_Z^2+18d_zK_Z^2<p,
\]
from the assumptions of the the lemma. Hence from Proposition~\ref{sec1-prop-2} it follows again that the points of intersection of $C_i$ and $C_j$ are fixed points of $D_Y$. This concludes the proof of part (2) of the lemma.

I proceed next to prove part (3) of the lemma. Since $C\in |A|$ and $K_Y$ is nef and big, it follows that
\begin{gather}\label{sec3-eq-5}
n_iK_Y \cdot C_i \leq K_Y \cdot A=18d_zK_Z^2\leq 72d_zK_X^2.
\end{gather}
Suppose that $K_Y \cdot C_i=0$. Then $C_i \cong \mathbb{P}^1$. Suppose that $K_Y \cdot C_i>0$. Then from the previous inequality and the Hodge Index Theorem,
\[
C_i^2\leq (K_Y\cdot C_i)^2/K_Y^2\leq 72^2d_z^2K_X^2.
\]
Then from the adjunction formula it follows that
\begin{gather}\label{sec3-eq-6}
p_a(C_i)=\frac{1}{2}(K_Y\cdot C_i+C_i^2) +1\leq \frac{1}{2}(72^2d_z^2+72d_z)K_X^2+1.
\end{gather}
Now under the assumptions of the statement of the lemma and the previous inequality, it follows that $p_a(C_i)<(p-1)/2$, for all $i=1,\ldots, s$. Hence by~\cite[Proposition 4.7, Corollary 4.8]{Tz19}, it follows that if $\nu \colon \bar{C}_i \rightarrow C_i$ is the normalization of $C_i$, then $D$ lifts to a nontrivial global vector field on $\bar{C}_i$  and therefore, $\bar{C}_i$ is either $\mathbb{P}^1$ or a smooth elliptic curve. Suppose that $C_i$ is singular. Then by~\cite[Corollary 4.8]{Tz19}, $\bar{C}_i\cong \mathbb{P}^1$. Suppose that $C_i$ is smooth. Then $C_i=\bar{C}_i$ and $C_i$ is either $\mathbb{P}^1$ or a smooth elliptic curve.  If the second case happens then $D_Y$ has no fixed points on $C_i$. But $|A_Z|$ is base point free. Hence $|A|$ is also base point free. Then, Since $D_Y^p=D_Y$, $|A|$ has a basis corresponding to curves stabilized by $D_Y$. Hence there exists  $C^{\prime}\in |A|$ a curve stabilized by $D_Y$ which does not contain $C_i$. Moreover, $C_i \cdot C^{\prime} =C_i \cdot A=C_i \cdot f^{\ast}A_Z=\hat{C}_i \cdot A_Z>0$. Then, under the assumptions of the lemma and Proposition~\ref{sec1-prop-2}, the points of intersection $C^{\prime}  \cap C_i$ are fixed points of $D_Y$. Hence  $D_Y$ has fixed points on $C_i$ and therefore $\bar{C}_i\cong \mathbb{P}^1$.

Let now $\nu \colon \bar{C}_i \rightarrow C_i$ be the normalization of $C_i$ and $P \in C_i$ a singular point. I will show that $\nu^{-1}(P)_{red}$ is a single point. Suppose that the support of $\nu^{-1}(P)$ consists of at least two points. Then by~\cite[Lemma 3.15]{Tz19},  every point of $\nu^{-1}(P)$ is a fixed point of $\bar{D}$, the lifting of $D$ on $\bar{C}_i$. Since $D_Y^p=D_Y$, it follows from~\cite[Corollary 3.8]{Tz19} that $\bar{D}$ has exactly two distinct fixed points. Therefore the support of $\nu^{-1}(P)$ consists of exactly two distinct points. Now, as before, $|A|$ is base point  free and $|A|$ has a basis of curves stabilized by $D_Y$. Hence there exists a $C^{\prime} \in |A|$ which is an integral curve of $D_Y$ such that $P \not \in C^{\prime}$. Then the points of intersection $C_i \cap C^{\prime} $ are fixed points of $D_Y$ and hence  $D_Y$ has a fixed point $Q$ on $C_i$ different that $P$ . Then by~\cite[Corollary 3.18]{Tz19}, every reduced point of $\nu^{-1}(Q)$ is a fixed point of $\bar{D}$. Hence $\bar{D}$ has at least three fixed points, which is a contradiction. Hence $\nu^{-1}(P)_{red}$ is a single point, as claimed. This concludes the proof of the lemma.
\end{proof}

\begin{lemma}\label{sec3-lemma-2}
With notations as above. Suppose in addition that $(72^2d_z^2+72d_z)K_X^2<p-3.$ Suppose that the component $C_i$ of $C$ is singular. Then there exists a birational map $g \colon Y^{\prime}\rightarrow Y$ with the following properties:
\begin{enumerate}
\item $Y^{\prime}$ and the birational transform $C_i^{\prime}=g^{-1}_{\ast}C_i$ of $C_i$ are  smooth.
\item $Y^{\prime}-g^{-1}(\mathrm{Sing}(C_i)) \cong Y-\mathrm{Sing}(C_i)$, where $\mathrm{Sing}(C_i)$ is the singular set of $C_i$.
\item $D_Y$ lifts to a nontrivial vector field $D^{\prime}$ on $Y^{\prime}$.
\item $C_i^{\prime}$ and every $g$-exceptional curve are integral curves of $D^{\prime}$.
\item There exist two $g$-exceptional curves $E$ and $F$ such that $C_i^{\prime}\cap E\cap F \not= \emptyset$ and $C_i^{\prime}$, $E$, $F$ intersect transversally. 
\item $f$ consists of at most at most
\[
(72^2d_z^2+72d_z)K_X^2+2
\]
blow ups.

\end{enumerate}

\end{lemma}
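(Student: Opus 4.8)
The plan is to take $g\colon Y^{\prime}\to Y$ to be the minimal embedded resolution of the pair $(Y,C_i)$ over $\mathrm{Sing}(C_i)$, followed, if needed, by a bounded number of further blow ups, and to extract $(1)$--$(6)$ from the resolution process together with the information already collected in Lemma~\ref{sec3-lemma-1}.

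First I would fix the setup and the lifting. By Lemma~\ref{sec3-lemma-1}, $C_i$ is an integral curve of $D_Y$ not contained in the divisorial part, its normalization is $\mathbb{P}^1$, every singular point $P$ of $C_i$ is a fixed point of $D_Y$ with $\nu^{-1}(P)_{red}$ a single point (so $C_i$ is unibranch at each of its singular points), and $p_a(C_i)\le\frac{1}{2}(72^2d_z^2+72d_z)K_X^2+1$. Build $g$ one blow up at a time, each centred at a singular point of the strict transform of $C_i$ or, once that is smooth, at a point where it is tangent to an exceptional curve. At each stage one checks, exactly as in the proof of Lemma~\ref{sec3-lemma-1} and using \cite[Lemma 3.15, Corollary 3.8, Corollary 3.18]{Tz19}, that the centre is a fixed point of the vector field lifted so far (here one uses that all intersection and contact numbers encountered are $<p$, which rules out the exceptional behaviour of tangencies of $p$-divisible order); hence the field lifts once more and the new exceptional curve is an integral curve of it, by the Rudakov--Shafarevich local normal form at a fixed point (cf. \cite{RS76} and the proof of Lemma~\ref{sec1-prop-12}). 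This gives $(1)$--$(4)$: $Y^{\prime}$ and $C_i^{\prime}=g^{-1}_{\ast}C_i$ are smooth; $g$ is an isomorphism over $Y-\mathrm{Sing}(C_i)$ by construction; $D_Y$ lifts to a vector field $D^{\prime}$ on $Y^{\prime}$, nontrivial since $g_{\ast}D^{\prime}=D_Y\neq 0$; and $C_i^{\prime}$ together with every $g$-exceptional curve are integral curves of $D^{\prime}$, with $C_i^{\prime}$ not in the divisorial part because $C_i$ is not.

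The crux is $(5)$. Because $C_i$ is unibranch, the strict transform of $C_i$ meets the exceptional locus of each partial blow up in a single point, so immediately after the step that smooths out a (necessarily multiplicity-two, i.e. cuspidal) last singularity, the now smooth strict transform is tangent to the freshly created exceptional curve. Blowing up that point of tangency then yields, by a direct local computation modelled on the toy case of the cusp $y^2=x^3$, a point through which the strict transform $C_i^{\prime}$ and two of the exceptional curves pass, pairwise transversally; call these $E$ and $F$. (If $C_i$ has a singular point of multiplicity $\ge 3$, the strict transform may still be tangent to an exceptional curve at an intermediate stage; one iterates, the triple configuration persisting while the contact order strictly drops, until transversality is achieved.) This is where Lemma~\ref{sec3-lemma-1} is decisive: unibranch-ness forces the meeting of strict transform and exceptional curve to be concentrated at one point, which is precisely what makes the transverse triple point appear during the resolution --- a nodal $C_i$ would never produce one, but that case is excluded by Lemma~\ref{sec3-lemma-1}.

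Finally, for $(6)$, blowing up a point of multiplicity $\mu$ on the strict transform of $C_i$ drops its arithmetic genus by $\binom{\mu}{2}\ge 1$, and the process ends at $p_a=p_a(\bar C_i)=0$, so at most $\sum_j\binom{\mu_j}{2}=p_a(C_i)$ blow ups suffice to make $C_i^{\prime}$ smooth; the further blow ups that separate a tangency of order $k$ between a strict transform of $C_i$ and an exceptional curve number at most $k-1$, and such $k$ are themselves bounded by the relevant local contributions to $p_a(C_i)$, so at most $p_a(C_i)$ more are needed. Hence $g$ is a composition of at most $2p_a(C_i)\le(72^2d_z^2+72d_z)K_X^2+2$ blow ups. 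I expect $(5)$ to be the main obstacle: producing the transverse triple point while staying within the $2p_a(C_i)$ budget relies essentially on the unibranch property and on a careful local analysis of the embedded resolution, whereas $(1)$--$(4)$ and $(6)$ are then comparatively routine.
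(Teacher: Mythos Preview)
Your proposal follows essentially the same two-stage approach as the paper: first blow up singular points of the strict transform of $C_i$ until it becomes smooth (at most $p_a(C_i)$ steps, using that each blow up drops $p_a$ by at least $1$), then resolve the resulting tangency between the smooth strict transform and the last exceptional curve (at most $E\cdot C_i^{\prime\prime}\le p_a(C_i)$ further steps), and observe that the final blow up produces the required transverse triple point. The justification that each centre is a fixed point of the lifted vector field, via $p_a(C_i)<(p-1)/2$ and the intersection bounds being $<p$, also matches the paper.

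One correction: the last singular point of the strict transform is \emph{not} ``necessarily multiplicity-two''. A unibranch singularity of arbitrary multiplicity $m_R\ge 2$ can become smooth after a single blow up (e.g.\ $y^{m}=x^{m+1}$). What the paper actually uses is only that $m_R\ge 2$, so that $E\cdot C_i^{\prime\prime}=m_R\ge 2$; combined with the unibranch property this forces $E$ and $C_i^{\prime\prime}$ to be tangent at their unique point of intersection. Your argument survives this correction unchanged, since you already allow for iterating the tangency resolution, and the contact order $m_R$ is still bounded by $p_a(C_i)$.
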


\begin{proof}
Since $(72^2d_z^2+72d_z)K_X^2<p-3$, from Lemma~\ref{sec3-lemma-1}, it follows that every singular point $P$  of $C_i$ is a fixed point of $D_Y$. Therefore $D_Y$ lifts to a vector field $D_1$ on the blow up $g_1 \colon Y_1 \rightarrow Y$ of $P$. Moreover, the $g_1$-exceptional curve $E_1$ and the birational transform $C_1$ of $C_i$ in $X_1$ are integral curves of  $D_1$. Also, the singular points of $C_1$ are also fixed points of $D_1$ since the condition of the lemma implies that $\mathrm{p}_a(C_i)<(p-1)/2$ (see the equation (\ref{sec3-eq-6})), and hence
\[
\mathrm{p}_a(C_1)<\mathrm{p}_a(C_i)< (p-1)/2.
\]
Then by~\cite[Proposition 4.7]{Tz19}, every singular point of $C_1$ is a fixed point of $D_1$.  So this process can continue until the birational transform of $C_i$ becomes smooth.

Let $m_p$ be the multiplicity of $P\in C_i$. Then an elementary calculation shows that
\[
\mathrm{p}_a(C_1)=\mathrm{p}_a(C_i)-\frac{1}{2}(m_p^2-m_p)\leq \mathrm{p}_a(C_i)-1.
\]
Then from this it follows that after at most $\mathrm{p}_a(C_i)-1$ blow ups (over all singular points of $C$, the birational transform of $C_i$ becomes smooth. Therefore there exists a sequence of blow ups
\begin{gather}\label{sec3-seq-6}
Y^{\prime\prime}=Y_n \stackrel{g_n}{\rightarrow} Y_{n-1}\stackrel{g_{n-1}}{\rightarrow} \cdots \stackrel{g_{k+1}}{\rightarrow} Y_{k} \stackrel{g_k}{\rightarrow} Y_{k-1} \stackrel{g_{k-1}}{\rightarrow}
 \cdots \stackrel{g_2}{\rightarrow} Y_1\stackrel{g_1}{\rightarrow} Y,
\end{gather}
with the following properties. Let $C_k$ be the birational transform of $C_i$ in $Y_k$. Then 
\begin{enumerate}
\item $Y^{\prime\prime}=Y_n$ and $C_i^{\prime\prime}=C_n$ are smooth. 
\item $C_{n-1}$, is singular.
\item $g_k$ is the blow up of a singular point of $C_{k-1}$. 
\item $D_Y$ lifts to a vector field $D_k$ on $Y_k$ and in particular to a vector field $D^{\prime\prime}=D_n$ on $Y^{\prime\prime}=Y_n$.
\item Every exceptional curve of $g=g_1\circ  \cdots \circ g_n$ is an integral curve of $D^{\prime\prime}$.
\item Over any singular point of $C_i$, $C_i^{\prime\prime}$ intersects exactly one $g$-exceptional curve at a single point.
\item $n \leq \mathrm{p}_a(C_i)-1$.
\end{enumerate}
From the above properties only (6) needs justification. Indeed. The restriction $g \colon C_i^{\prime\prime}\rightarrow C_i$ is the normalization of $C_i$. Then from Lemma~\ref{sec3-lemma-1}.3, $g^{-1}(P)_{red}$ is a single point, for any $P\in C_i$. Therefore over any singular point of $C_i$, $C_i^{\prime\prime}$ meets exactly one $g$-exceptional curve at a single point.

Let now $P\in C_i$ be a singular point and $E\subset X^{\prime\prime}$ be the unique $g$-exceptional curve which intersects $C_i^{\prime\prime}$. Since $C_{n-1}$ is singular, $E=E_{n}$, the $g_n$-exceptional curve. Then I will show that $E$ and $C_i^{\prime\prime}$ are tangent at their point of intersection $Q$ and moreover $E\cdot C_i^{\prime\prime}\leq p_a(C_i)$.

Indeed. Let $R\in C_{n-1}$ be the singular point of $C_{n-1}$ that is blown up by $g_n$. Then 
\[
g_n^{\ast}C_{n-1}=C_i^{\prime\prime}+m_RE,
\]
where $m_R\geq 2$ is the multiplicity of $R\in C_{n-1}$. Then 
\[
2\leq E\cdot C_i^{\prime\prime} =m_R\leq m_P\leq p_a(C_i).
\]
Since the intersection of $E$ and $C_i^{\prime\prime}$ is the single point $Q$, it follows that $E$ and $C_i^{\prime\prime}$ are tangent at $Q$ and $E\cdot C_i^{\prime\prime}\leq p_a(C_i)$ as claimed. Then the tangency of $C_i^{\prime\prime}$ and $E$ can be resolved by blowing up repeatedly their point of tangency. In particular, there exists a sequence of at most $E\cdot C_i^{\prime\prime}$ blow ups
\begin{gather}\label{sec3-seq-7}
Y^{\prime} =Y_{n+k}\stackrel{g_{n+k}}{\rightarrow} Y_{n+k-1}\stackrel{g_{n+k-1}}{\rightarrow} \cdots \stackrel{g_{n+s+1}}{\rightarrow} Y_{n+s}\stackrel{g_{n+s}}{\rightarrow} Y_{n+s-1}\stackrel{g_{n+s-1}}{ \rightarrow}  \cdots \stackrel{g_{n+2}}{\rightarrow} Y_{n+1}\stackrel{g_{n+1}}{\rightarrow} Y^{\prime\prime}
\end{gather}
such that 
\begin{enumerate}
\item $Y_{n+1}$ is the blow up of $Q\in Y^{\prime\prime}$.
\item The birational transform $C_{n+s}$ of $C_i$ in $Y_{n+s}$ is tangent to the birational transform $\tilde{E}_s$, of $E$ in $Y_{n+s}$ at a unique point $Q_s\in C_{n+s}$. Moreover the $g_{n+s}$-exceptional curve $E_{n+s}$, $C_{n+1}$ and $\tilde{E}_s$ all pass through $Q_s$, $1\leq s\leq k-1$.  
\item $Y_{n+s+1}$ is the blow up of $Q_s \in Y_{n+s}$.
\item $D$ lifts to a vector field $D^{\prime}$ on $Y^{\prime}$ and all exceptional curves are integral curves of  $D^{\prime}$.
\item Let $C_i^{\prime}$, $E^{\prime}$ be the birational transforms in $Y^{\prime}$ of $C_i^{\prime\prime}$ and $E$ respectively, and $F$ be the $g_{n+k}$-exceptional curve. Then $C_i^{\prime}\cap E^{\prime}\cap F \not= \emptyset$ and $C^{\prime}$, $E^{\prime}$, $F$ intersect pairwise transversally at their common point of intersection.
\item $1 \leq s \leq k\leq E\cdot C_i^{\prime\prime} \leq p_a(C_i)$
\end{enumerate}
From the previous statements only (4) needs justification. The map $g_{n+s}$ is the blow up of the point of intersection of the birational transforms $C_{n+s}$ and $E_s$ of $C_i^{\prime\prime}$ and $E$ in $Y_{n+s}$. Now both $C_{n+s}$ and $E_s$ are integral curves of $D_{n+s}$, the lifted vector field on $Y_{n+s}$ and hence, since $C_{n+s} \cdot E_s <p_a(C_i)<p$, then by~\cite{Tz19}, their point of intersection is a fixed point of $D_{n+s}$. So then $D_{n+s}$ lifts to the blow up $Y_{n+s+1}$ of $Y_{n+s}$ and eventually to $Y^{\prime}$.

The two sequences (\ref{sec3-seq-6}), (\ref{sec3-seq-7}) and the inequality (\ref{sec3-eq-6}), give the sequence claimed of length less  
\[
n+k\leq 2p_a(C_i)\leq (72^2d_z^2+72d_z)K_X^2+2.
\]

\end{proof}

Assume from now on that $(72^2d_z^2+72d_z)K_X^2<p-3$ and hence the statements of Lemmas~\ref{sec3-lemma-1},~\ref{sec3-lemma-2} hold. Let $g\colon Y^{\prime} \rightarrow Y$ be the birational map with the properties as in Lemma~\ref{sec3-lemma-2}. Let $E$ and $F$ be two $g$-exceptional curves such that $C_i^{\prime}\cap E\cap F \not=\emptyset$. Let $Q \in C_i^{\prime}\cap E \cap F$ be a common point. Then by Lemma~\ref{sec3-lemma-2}, $C_i^{\prime}$, $E$ and $F$ intersect pairwise transversally at $Q$. Moreover, $C^{\prime}$, $E$ and $F$ are stabilized by $D^{\prime}$

Consider next two cases with respect to whether one of the curves $E$ and $F$ is contained in the divisorial part of $D^{\prime}$. Notice that it is impossible that both $E$ and $F$ are contained in the divisorial part of $D^{\prime}$ since $D^{\prime}$ is of multiplicative type and in this case the components of its divisorial part are disjoint.

\textbf{Case 1:} Suppose that one of  $E$ and $F$ is contained in the divisorial part of $D^{\prime}$.

Since $E$ and $F$ are integral curves of $D^{\prime}$ and they intersect transversally at $Q$, $Q$ is a fixed point of $D^{\prime}$. Let $\tilde{h} \colon W \rightarrow X^{\prime}$ be the blow up of $Q$. Then $D^{\prime}$ lifts to a vector field $D_W$ on $W$ and the $\tilde{h}$-exceptional curve $B$ of $\tilde{h}$ is an integral curve of $D_W$. Moreover, by Lemma~\ref{sec1-prop-12}, $B$ is not contained in the divisorial part of $D_W$  and hence the restriction of $D_W$ on $B$ is not zero. Let $C_i^{\prime\prime}$, $E^{\prime}$ and $F^{\prime}$ be the birational transforms of $C_i^{\prime}$, $E$ and $F$ in $W$, respectively. Then, since  $C_i^{\prime}$, $E$ and $F$ intersect pairwise transversally at $Q$, $C_i^{\prime\prime}$, $E^{\prime}$ and $F^{\prime}$, intersect $B$ in three distinct points.  Since $B\cdot E^{\prime} =B\cdot F^{\prime} =B\cdot C_i^{\prime\prime}=1<p$ and $C_i^{\prime}$, $E^{\prime}$, $F^{\prime}$ and $B$ are integral curves of $D_W$, their points of intersection are fixed points of $D_{W}$. But then $D_W$ has three fixed points on $B$, which is isomorphic to $\mathbb{P}^1$. Since the restriction of $D_W$ on $B$ is not zero, this is impossible by~\cite[Corollary 3.8]{Tz19}. Hence it is not possible that one of the curves $E$ and $F$ is contained in the divisorial part of $D^{\prime}$.

\textbf{Case 2:} Suppose that both $E$ and $F$ are not contained in the divisorial part of $D^{\prime}$.

In order to treat this case I will use Proposition~\ref{sec2-prop-3}.

As a first step I will construct an ample invertible sheaf $H^{\prime}$ on $Y^{\prime}$ such that $H^{\prime}+K_{Y^{\prime}}$ is nef. $H^{\prime}$ is constructed inductively as follows. Let 
\[
Y^{\prime}=Y_n \stackrel{g_{n}}{\rightarrow} Y_{n-1} \stackrel{g_{n-1}}{\rightarrow} \cdots \rightarrow Y_{k}\stackrel{g_k}{\rightarrow} Y_{k-1} \stackrel{g_{k-1}}{\rightarrow} \cdots Y_1\stackrel{g_1}{\rightarrow} Y,
\]
the decomposition of $g\colon Y^{\prime} \rightarrow Y$ into blow ups. Let $E_k$ be the $g_k$-exceptional curve. Then by Lemma~\ref{sec3-lemma-2}, 
\begin{gather}\label{sec3-eq-8}
n \leq (72^2d_z^2+72d_z)K_X^2+2.
\end{gather}
By Corollary~\ref{sec1-cor-13}, $\tilde{H}_0=m_0K_Y-Z$ is ample on $Y$, where $m_0=2^{17K_X^2+37}-1$, $Z=\sum_{i=1}^{\gamma}a_iF_i$, $a_i \geq 0$ and $\sum_{i=1}^{\gamma}a_i < m_0^2K_X^2$. In particular, 
\begin{gather}\label{sec4-eq-3}
a_i < m_0^2K_X^2,
\end{gather}
for all $i$. By a renumbering of the $h$-exceptional curves $F_i$, there is a number $\gamma^{\prime} \leq \gamma$ such that $F_i \cdot \tilde{\Delta}_m=0$, for $i \leq \gamma^{\prime}$ and $F_i \cdot \tilde{\Delta}_m>0$, for $i> \gamma^{\prime}$. Moreover, since an $h$-exceptional curve $F$ which is also $f$ exceptional satisfies $F \cdot \tilde{\Delta}_m=0$, we may assume that there exists a $\gamma^{\prime\prime}\leq \gamma^{\prime}$ such that the curves $F_1, \ldots F_{\gamma^{\prime\prime}}$, are exactly the $h$-exceptional curves which are also $f$-exceptional.
Let 
\begin{gather}\label{sec6-eq-23}
\tilde{H}_0=(2m_0^2K_X^2+m_0)K_Y-Z_0+2m_0^2K_X^2\tilde{\Delta}_m,
\end{gather}
where $Z_0=\sum_{i=1}^{\gamma^{\prime}}a_iF_i$. I will show that $\tilde{H}_0$ is ample on $Y$. In order to show this, write
\begin{gather}\label{sec4-eq-1}
\tilde{H}_0=(2m_0^2K_X^2)K_Y+(m_0K_Y-Z)+(Z-Z_0)+2m_0^2K_X^2\tilde{\Delta}_m.
\end{gather}
Then
\begin{gather*}
\tilde{H}_0 \cdot \tilde{\Delta}_m=2m_0^2K_X^2(K_Y +\tilde{\Delta}_m)\cdot \tilde{\Delta}_m+(m_0K_Y-Z)\cdot \tilde{\Delta}_m +(Z-Z_0)\cdot \tilde{\Delta}_m=\\
(m_0K_Y-Z)\cdot \tilde{\Delta}_m +(Z-Z_0)\cdot \tilde{\Delta}_m>0,
\end{gather*}
since $\tilde{\Delta}_m$ is elliptic, $m_0K_Y-Z$ is ample and $\tilde{\Delta}_m \not\subset (Z-Z_0)$. Then an argument identical to the one used in the proof of Proposition~\ref{sec1-prop-7} (especially after the equation (\ref{sec6-eq-1})) shows that $\tilde{H}_0\cdot C>0 $ for any curve $C$ on $Y$ and $\tilde{H}_0^2>0$. Therefore $\tilde{H}_0$ is ample.

Let  $H_0=2K_Y+4\tilde{H}_0$. Then by~\cite[Theorem 1.4]{DF15}, $H_0$ is very ample on $Y$.

Let $\tilde{H}_1=2f^{\ast}H_0-E_1$. This is ample on $Y_1$. Let $H_1=2K_{Y_1}+4\tilde{H}_1$. By~\cite[Theorem 1.4]{DF15}, $H_1$ is very ample on $Y_1$. Define now inductively, $\tilde{H}_k=2g_k^{\ast}H_{k-1}-E_k$, and $H_k =2K_{Y_k}+4\tilde{H}_k$, $k=2,\ldots, n$. By~\cite[Ex 3.3, Chapter 5]{Ha77}, $\tilde{H}_k$ is ample and by~\cite[Theorem 1.4]{DF15}, $H_k$ is very ample on $Y_k$ for all $k=1,\ldots, n$. Let $H^{\prime}=H_n$. 

Next I will show that  $K_{Y^{\prime}}+H^{\prime}$ is nef. This will be shown by induction on $k$.  For $k=1$,
\[
\tilde{H}_1+K_{Y_1}=2g_1^{\ast}\tilde{H}_0-E_1+K_{Y_1}=2g_1^{\ast}\tilde{H}_0+g_1^{\ast}K_X=g_1^{\ast}(2\tilde{H}_0+K_Y),
\]
which is nef since $K_Y$ is nef and big and $\tilde{H}_0$ ample. Then,
\[
H_1+K_{Y_1}=2K_{Y_1}+4\tilde{H}_1+K_{Y_1}=3(K_{Y_1}+\tilde{H}_1)+\tilde{H}_1,
\]
which is nef since $K_{Y_1}+\tilde{H}_1$ is nef and $\tilde{H}_1$ is ample. 

Assume now that $H_{k-1}+K_{X_{k-1}}$ is nef. Then
\begin{gather*}
H_k+K_{X_k}=2K_{X_k}+4\tilde{H}_k+K_{X_k}=3K_{X_k}+4(2g_k^{\ast}H_{k-1}-E_k)=\\
3K_{X_k}+3(2g_k^{\ast}H_{k-1}-E_k)+(2g_k^{\ast}H_{k-1}-E_k)=3(g_k^{\ast}(2H_{k-1}+K_{Y_{k-1}}))+(2g_k^{\ast}H_{k-1}-E_k)
\end{gather*}
which is nef since by induction, $H_{k-1}$ is ample, $H_{k-1}+K_{X_{k-1}}$ is nef and $2g_k^{\ast}H_{k-1}-E_k$ is ample. Therefore, $K_{Y^{\prime}}+H^{\prime}$ is nef as claimed.

Now observe that by construction, $(g_k)_{\ast}H_k=2K_{Y_{k-1}}+8H_{k-1}$, $k=1,\ldots, n$. Then it easily follows that
\begin{gather}\label{sec4-eq-4}
g_{\ast}H^{\prime}=2\sum_{k=1}^{n-1}8^k K_Y +8^nH_0=\frac{2}{7}(8^n-1)K_Y+8^nH_0=\\\nonumber
\left(8^{n+1}m_0^2K_X^2+4\cdot 8^nm_0 +\frac{2}{7}(8^{n+1}-1)\right) K_Y-4\cdot 8^n Z_0+8^{n+1}m_0^2 K_X^2\tilde{\Delta}_m.
\end{gather}

Let  $\Delta^{\prime}$ be the divisorial part of $D^{\prime}$, the lifting of $D$ on $X^{\prime}$. Then 
\[
\Delta^{\prime}=\sum_{i=1}^m \hat{\Delta}_i+\sum_{i=1}^r\hat{\Gamma}_i+\sum_{i=1}^{k}\Delta_i^{\prime\prime},
\]
where $\hat{\Delta}_i=f_{\ast}^{-1}\tilde{\Delta}_i$, $i=1,\ldots, m$, $\hat{\Gamma}_i=f_{\ast}^{-1}\Gamma_i$, $i=1,\ldots, r$ are the birational transforms of $\tilde{\Delta}_i$, $\Gamma_j$ in $Y^{\prime}$ and $\Delta_s^{\prime\prime}$, $s=1,\ldots, k$, is a $g$-exceptional curve. In particular, $\Delta_i^{\prime\prime}\cong \mathbb{P}^1$ and $(\Delta_i^{\prime\prime})^2<0$, for all $i=1,\ldots, k$.

$H^{\prime}$ is ample but it may not be $D^{\prime}$-linear. The reason is that some of the $g$-exceptional curves, all of which are integral curves of $D^{\prime}$, may be contained in the divisorial part of $D^{\prime}$. This, following the case of $K_Y$ and $f$,  can be remedied by contracting every component of $\Delta^{\prime}$. This can be done as follows.

Since $Y^{\prime}$ is smooth and $D^{\prime}$ is of multiplicative type, the irreducible components of $\Delta^{\prime}$ are disjoint. Moreover, every component of $\Delta^{\prime}$ is a smooth rational curve except $\Delta_m^{\prime}$ which is an elliptic curve. In addition, all components of $\Delta^{\prime}$ have negative self intersection. 

Also, since $C=f^{\ast}\hat{C}$ and $\hat{C}$ has been chosen so that none of its irreducible components pass through $Q\in Z$, where $Q$ is the contraction of $\tilde{\Delta}_m$ by $f$, $\tilde{\Delta}_m$ does not intersect any of the components of $C$. In particular it does not go through any singular point of $C_i$. Hence, since $Y^{\prime}$ is obtained by a sequence of blow ups starting from the singular point $P$  of $C_i$, the birational transform $\hat{\Delta}_m$ does not intersect any $g$-exceptional curve. Hence $\hat{\Delta}_m\cdot E_j=0$, where, by a slight abuse of notation, we call $E_j$ the birational transform in $Y^{\prime}$ of the $g_j$-exceptional curve $E_j$, $k=1,\ldots, n$. 

Let
\begin{gather}\label{sec3-eq-9}
A^{\prime}=H^{\prime}+\sum_{i=1}^{m-1}\frac{m_i}{\hat{d}_i}\hat{\Delta}_i +\sum_{s=1}^r \frac{\lambda_s}{\hat{\gamma}_s}\hat{\Gamma}_s+\sum_{j=1}^k \frac{\nu_j}{\delta_j}\Delta^{\prime\prime}_j +
\sum_{r=1}^{\gamma^{\prime\prime\prime}}+\theta \hat{\Delta}_m,
\end{gather}

where,  $\hat{\Delta}_i^2=-\hat{d}_i$,  $\hat{\Gamma}_s^2=-\hat{\gamma}_s$, $(\Delta^{\prime\prime}_j)^2=-\delta_j$, $m_i=H^{\prime}\cdot \hat{\Delta}_i$, $\lambda_s=H^{\prime}\cdot \hat{\Gamma}_s$, $\nu_j=H^{\prime}\cdot \Delta_j^{\prime\prime}$, $i=1, \ldots m$, $j=1,\ldots, k$ and $\theta=-(H^{\prime}\cdot \hat{\Delta}_m)/\hat{\Delta}_m^2$. Considering that $\tilde{\Delta}_m$ does not intersect any other component of $\Delta^{\prime}$ and any of the $g$-exceptional curves, it follows  from the equation (\ref{sec4-eq-4}), the fact that $Z_0 \cdot \tilde{\Delta}_m=0$ and that $\tilde{\Delta}_m$ is elliptic, that
\begin{gather}\label{sec6-eq-3}
\theta=-\frac{H^{\prime}\cdot \hat{\Delta}_m}{\hat{\Delta}_m^2}=-\frac{g_{\ast}H^{\prime}\cdot \tilde{\Delta}_m}{\tilde{\Delta}_m^2}=4\cdot 8^nm_0 +\frac{2}{7}(8^{n+1}-1).
\end{gather}

\begin{claim}\label{sec3-claim-10} 
$A^{\prime}$ is a nef and big $\mathbb{Q}$-Cartier divisor. Moreover,
\begin{enumerate}
\item $m_{A^{\prime}} A^{\prime}$ is Cartier for a positive integer $m_{A^{\prime}}$  such that
\[
m_{A^{\prime}}  \leq e^{(43K_X^2+4n+72)/e}.
\]
\item $A^{\prime}$ is nef and big.
\item $A^{\prime}\cdot \hat{\Delta}_i =A^{\prime}\cdot \hat{\Gamma_s}= A^{\prime}\cdot \Delta^{\prime\prime}_j =0$, 
$(m_{A^{\prime}}A^{\prime})\otimes \mathcal{O}_{\hat{\Delta}_m}\cong \mathcal{O}_{\hat{\Delta}_m}$ and $A^{\prime} \cdot B>0$, for any integral curve different from $\hat{\Delta}_i^{\prime}$, $\hat{\Gamma}_s$, $\Delta_j^{\prime\prime}$ and $\hat{\Delta}_m$, for all possible values of $i,s,j$. 
\end{enumerate}
\end{claim}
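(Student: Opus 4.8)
The plan is to exploit that $Y^{\prime}$ is smooth, so that every Weil divisor on it is Cartier and $A^{\prime}$ is automatically $\mathbb{Q}$-Cartier, together with the fact that $A^{\prime}$ is built from the ample divisor $H^{\prime}$ by a Zariski-type correction. Writing $A^{\prime}=H^{\prime}+N$, the divisor $N$ is the effective $\mathbb{Q}$-divisor obtained by adjoining to $H^{\prime}$, for each component $E$ of the divisorial part $\Delta^{\prime}$ of $D^{\prime}$, the term $-\frac{H^{\prime}\cdot E}{E^{2}}\,E$, whose coefficient is positive because $H^{\prime}$ is ample and $E^{2}<0$. Since $D^{\prime}$ is of multiplicative type the components of $\Delta^{\prime}$ are pairwise disjoint, so the intersection matrix of $\mathrm{Supp}(N)$ is diagonal and one reads off at once that $A^{\prime}\cdot E=H^{\prime}\cdot E+\bigl(-\frac{H^{\prime}\cdot E}{E^{2}}\bigr)E^{2}=0$ for every component $E$ of $\Delta^{\prime}$; in particular $A^{\prime}\cdot\hat{\Delta}_i=A^{\prime}\cdot\hat{\Gamma}_s=A^{\prime}\cdot\Delta_j^{\prime\prime}=A^{\prime}\cdot\hat{\Delta}_m=0$, part of (3). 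For an integral curve $B$ not among these, $A^{\prime}\cdot B=H^{\prime}\cdot B+N\cdot B>0$ because $H^{\prime}$ is ample and $N\cdot B\ge 0$, which gives the last assertion of (3) and the nefness of $A^{\prime}$; bigness follows from $A^{\prime 2}=A^{\prime}\cdot H^{\prime}+A^{\prime}\cdot N=H^{\prime 2}+H^{\prime}\cdot N>0$, using $A^{\prime}\cdot N=0$. This is (2).

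For the index bound (1) I would first note that $\theta=4\cdot 8^{n}m_{0}+\frac{2}{7}(8^{n+1}-1)$ is an integer, since $8\equiv 1\pmod 7$ forces $7\mid 8^{n+1}-1$; hence the only denominators occurring in $A^{\prime}$ are $\hat{d}_i=-\hat{\Delta}_i^{2}$ ($1\le i\le m-1$), $\hat{\gamma}_s=-\hat{\Gamma}_s^{2}$ ($1\le s\le r$) and $\delta_j=-(\Delta_j^{\prime\prime})^{2}$ ($1\le j\le k$), so $m_{A^{\prime}}$ divides their least common multiple, and by Lemma~\ref{sec1-prop-10} it suffices to bound the sum of these numbers. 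I would track self-intersections through the blow-ups composing $g$: on $Y$ one has $\sum_{i=1}^{m-1}(-\tilde{\Delta}_i^{2})\le 9K_X^{2}$ by (\ref{sec3-eq-100}), there are at most $\gamma\le 17K_X^{2}+36$ curves $\Gamma_s$ each with $-\Gamma_s^{2}=2$, and there are $k\le n$ curves $\Delta_j^{\prime\prime}$, each a $(-1)$-curve at the moment it is created; every one of the at most $n$ blow-ups of $g$ worsens by $1$ the self-intersection of the strict transform of a curve through its centre, and since the components of $\Delta^{\prime}$ and all their transforms are pairwise disjoint and $\hat{\Delta}_m$ meets no $g$-exceptional curve, each blow-up affects at most one curve of $\{\hat{\Delta}_i\}\cup\{\hat{\Gamma}_s\}\cup\{\Delta_j^{\prime\prime}\}$. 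Being deliberately generous with these decrements,
\[
\sum_{i=1}^{m-1}\hat{d}_i+\sum_{s=1}^{r}\hat{\gamma}_s+\sum_{j=1}^{k}\delta_j\le(9K_X^{2}+n)+\bigl(2(17K_X^{2}+36)+n\bigr)+(n+n)=43K_X^{2}+4n+72,
\]
so Lemma~\ref{sec1-prop-10} yields $m_{A^{\prime}}\le e^{(43K_X^{2}+4n+72)/e}$, which is (1).

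The remaining point of (3), namely $(m_{A^{\prime}}A^{\prime})\otimes\mathcal{O}_{\hat{\Delta}_m}\cong\mathcal{O}_{\hat{\Delta}_m}$, is where the special shape of $H^{\prime}$ is used. In a neighbourhood of $\hat{\Delta}_m$ all correction terms attached to the other, disjoint, components of $\Delta^{\prime}$ vanish, so there $A^{\prime}=H^{\prime}+\theta\hat{\Delta}_m$ is honestly Cartier and it suffices to show $A^{\prime}|_{\hat{\Delta}_m}\cong\mathcal{O}_{\hat{\Delta}_m}$. Since $\tilde{\Delta}_m$ contains no centre of $g$, the map $g$ restricts to an isomorphism $\hat{\Delta}_m\cong\tilde{\Delta}_m$ with $g^{\ast}\tilde{\Delta}_m=\hat{\Delta}_m$, and $H^{\prime}$ differs from $g^{\ast}(g_{\ast}H^{\prime})$ only by $g$-exceptional divisors disjoint from $\hat{\Delta}_m$, so $H^{\prime}|_{\hat{\Delta}_m}$ is the pullback of $(g_{\ast}H^{\prime})|_{\tilde{\Delta}_m}$. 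I would then restrict the explicit formula (\ref{sec4-eq-4}) for $g_{\ast}H^{\prime}$ to $\tilde{\Delta}_m$, using that $\tilde{\Delta}_m$ is elliptic, so $K_Y|_{\tilde{\Delta}_m}\cong\mathcal{O}_Y(-\tilde{\Delta}_m)|_{\tilde{\Delta}_m}$ by adjunction, and that $Z_0$ is disjoint from $\tilde{\Delta}_m$; because the coefficients of $K_Y$ and of $\tilde{\Delta}_m$ in (\ref{sec4-eq-4}) differ by exactly $\theta$ — this is precisely (\ref{sec6-eq-3}) — one obtains $(g_{\ast}H^{\prime})|_{\tilde{\Delta}_m}\cong\mathcal{O}_Y(\tilde{\Delta}_m)^{\otimes(-\theta)}|_{\tilde{\Delta}_m}$. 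Pulling back and adding $\theta\hat{\Delta}_m|_{\hat{\Delta}_m}\cong\mathcal{O}_{Y^{\prime}}(\hat{\Delta}_m)^{\otimes\theta}|_{\hat{\Delta}_m}$ then gives $A^{\prime}|_{\hat{\Delta}_m}\cong\mathcal{O}_{\hat{\Delta}_m}$, hence $(m_{A^{\prime}}A^{\prime})|_{\hat{\Delta}_m}\cong\mathcal{O}_{\hat{\Delta}_m}$ as well.

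The genuinely substantive steps are the counting in (1) and the triviality of $A^{\prime}|_{\hat{\Delta}_m}$. For the counting the crucial input is that the components of the divisorial part of a multiplicative-type vector field are pairwise disjoint, which is what prevents a single blow-up from worsening several of the relevant negative curves at once and keeps the sum of the $\hat{d}_i,\hat{\gamma}_s,\delta_j$ linear in $K_X^{2}$ and $n$; I expect this bookkeeping to be the main obstacle. For the last point the subtlety is that a degree-zero line bundle on the elliptic curve $\hat{\Delta}_m$ need not be trivial, so one really must invoke the way the term $2m_0^{2}K_X^{2}\tilde{\Delta}_m$ was inserted into $H^{\prime}$ and the resulting identity (\ref{sec6-eq-3}); everything else reduces to formal manipulation of the decomposition $A^{\prime}=H^{\prime}+N$.
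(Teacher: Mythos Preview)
Your proposal is correct and follows essentially the same route as the paper. The paper also defines $m_{A'}$ as the least common multiple of the $\hat d_i,\hat\gamma_s,\delta_j$, bounds their sum by $43K_X^2+4n+72$ via self-intersection bookkeeping through the $n$ blow-ups, and invokes Lemma~\ref{sec1-prop-10}; it likewise proves the triviality on $\hat\Delta_m$ by reducing to $(K_Y+\tilde\Delta_m)|_{\tilde\Delta_m}\cong\mathcal O_{\tilde\Delta_m}$ via the explicit formula (\ref{sec4-eq-4}) and (\ref{sec6-eq-3}).

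There are two minor organisational differences worth noting. For (1) the paper groups $\sum\hat d_i+\sum\hat\gamma_s\le 43K_X^2+72+n$ (using that at most one component of $\Delta_Y$ passes through the singular point $P$) and separately bounds $\sum_j\delta_j\le\sum_i e_i\le 3n$ over \emph{all} $g$-exceptional curves; your argument instead exploits that the components of the divisorial part are disjoint at every intermediate stage, which in fact yields the sharper $43K_X^2+72+2n$ before you relax it. For (2) the paper expands $(A')^2$ term by term, whereas your computation $(A')^2=A'\cdot H'=H'^2+H'\cdot N>0$, using $A'\cdot N=0$, is cleaner. Your explicit remark that $\theta\in\mathbb Z$ because $7\mid 8^{n+1}-1$ fills a point the paper leaves implicit.
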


I proceed to prove the claim. 

Let $m_{A^{\prime}}$ be the least common multiple of $\hat{d}_i$, $\delta_j$, $\hat{\gamma}_s$,  Then $m_{A^{\prime}}A^{\prime}$ is Cartier.  Next I will show that
\begin{gather}\label{sec3-eq-111}
\sum_{i=1}^{m-1}\hat{d}_i +\sum_{j=1}^r \hat{\gamma}_j \leq 43K_X^2 +72+n,\\
\sum_{j=1}^k\delta_j \leq 3n,\label{sec3-eq-122}
\end{gather}
where $n \leq (72^2d_z^2+72d_z)K_X^2+2$, is the number of blow ups that $g$ consists of.  Let 
\[
\hat{\Delta}_Y=g_{\ast}^{-1}\Delta_Y=\sum_{i=1}^m\hat{\Delta}_i+\sum_{j=1}^r \hat{\Gamma}_j.
\]
 Since the irreducible components of $\Delta_Y$ are disjoint, there can be at most one that passes through the singular point $P$ of $C_i$. Then, since $g$ consists of $n$ blow ups, starting from $P$,  and all components of $\Delta_Y$ are smooth and disjoint, it follows that 
\begin{gather}\label{sec6-eq-1}
\sum_{i=1}^{m-1}\hat{d}_i +\sum_{j=1}^r \hat{\gamma}_j<-\hat{\Delta}_Y^2 \leq -\Delta_Y^2+n=-\sum_{i=1}^m\tilde{\Delta}_i^2-\sum_{j=1}^r\Gamma^2_j+n=\sum_{i=1}^m d_i +2r +n,
\end{gather}
since $\Gamma_j^2=-2$, for all $j=1,\ldots, r$. Now $r \leq \gamma$, where $\gamma$ is the number of $h$-exceptional curves. Then from the equations (\ref{sec3-eq-31}) and (\ref{sec3-eq-100}) it follows that
\[
\sum_{i=1}^{m-1}\hat{d}_i +\sum_{j=1}^r \hat{\gamma}_j<9K_X^2+2(17K_X^2+36)+n=43K_X^2+72+n.
\]
This show (\ref{sec3-eq-111}). Now let $-e_j =E_j^2$, where $E_j$ are the $g$-exceptional curves, $j=1,\ldots, n$. Considering that $g$ is a composition of $n$ blow ups, an elementary argument shows that $\sum_{i=1}^ne_i \leq 3n$. Then, since $\Delta_j^{\prime\prime}$ is $g$-exceptional for all $j$,
\[
\sum_{i=1}^k\delta_i \leq \sum_{i=1}^n e_i\leq 3n,
\]
and (\ref{sec3-eq-122}) follows. Then the equations (\ref{sec3-eq-111}) and (\ref{sec3-eq-122}) give that
\[
\sum_{i=1}^{m-1}\hat{d}_i + \sum_{j=1}^r\hat{\gamma}_j +\sum_{s=1}^k \delta_j \leq 43K_X^2+4n+72.
\]
Therefore, from Lemma~\ref{sec1-prop-10}, it follows that 
\[
m_{A^{\prime}}  \leq e^{(43K_X^2+4n+72)}/e.
\]
This show part (1) of the claim.

Since $H^{\prime}$ is ample, it follows that $A^{\prime} \cdot B>0$, for any integral curve $B$ different from $\hat{\Delta}_i^{\prime}$, $\hat{\Gamma}_s$, $\Delta_j^{\prime\prime}$ and $\hat{\Delta}_m$, for all possible values of $i,s,j$. By the choice of the numbers $m_i$, $\lambda_j$ and $\nu_s$, it follows that $A^{\prime}\cdot \hat{\Delta}_i = A^{\prime} \cdot \hat{\Gamma}_s=A^{\prime}\cdot \Delta^{\prime\prime}_j =0$, for all possible values of $i,s,j$. Moreover, $\hat{\Delta}_m$ does not intersect any other irreducible component of $\Delta^{\prime}$ as well none of the $g$-exceptional curves. Then from the equations (\ref{sec4-eq-4}) and (\ref{sec6-eq-3}) it follows that
\begin{gather}\label{sec3-eq-10}
\mathcal{O}_{Y^{\prime}}(m_{A^{\prime}}A^{\prime})\otimes \mathcal{O}_{\hat{\Delta}_m} \cong \mathcal{O}_{Y^{\prime}}((m_{A^{\prime}}(H^{\prime}+\theta \hat{\Delta}_m)) \otimes \mathcal{O}_{\hat{\Delta}_m} \cong \\\nonumber 
\mathcal{O}_Y((m_{A^{\prime}} (g_{\ast}H^{\prime} +\theta\tilde{\Delta}_m))\otimes \mathcal{O}_{\tilde{\Delta}_m} 
\cong 
\mathcal{O}_Y(Nm_{A^{\prime}}(K_Y+\tilde{\Delta}_m) ) \otimes \mathcal{O}_{\tilde{\Delta}_m} \cong \mathcal{O}_{\tilde{\Delta}_m},
\end{gather}
since $\tilde{\Delta}_m$ is a smooth elliptic curve, $N=8^{n+1}m_0^2K_X^2+4\cdot 8^nm_0 +\frac{2}{7}(8^{n+1}-1)$. This shows part (3) of the claim and in particular that $A^{\prime}$ is nef.

In order to show that $A^{\prime}$ is nef and big It remains to show that $(A^{\prime})^2>0$. From the definition of $A^{\prime}$ in (\ref{sec3-eq-9}) it  follows that
\begin{gather*}
(A^{\prime})^2=(H^{\prime})^2 -\sum_{i=1}^{m-1}\frac{m_i^2}{\hat{d}_i}-\sum_{s=1}^r\frac{\lambda_s^2}{\gamma_s}-\sum_{j=1}^{k}\frac{\nu_j^2}{\hat{\delta}_j}-\theta^2d_m +
2\sum_{i=1}^{m-1}\frac{m_i}{\hat{d}_i}(H^{\prime}\cdot \hat{\Delta}_i)+2\sum_{s=1}^r\frac{\lambda_s}{\gamma_s}(\hat{\Gamma}_s \cdot H^{\prime}) +\\
2 \sum_{j=1}^k \frac{\nu_j}{\hat{\delta_j}} (H^{\prime}\cdot \Delta_j^{\prime\prime}) +
2\theta (H^{\prime}\cdot \hat{\Delta}_m)=
(H^{\prime})^2 -\sum_{i=1}^{m-1}\frac{m_i^2}{\hat{d}_i}-\sum_{s=1}^r\frac{\lambda_s^2}{\gamma_s}-\sum_{j=1}^{k}\frac{\nu_j^2}{\hat{\delta}_j}-\theta^2d_m +\\
2\sum_{i=1}^{m-1}\frac{m^2_i}{\hat{d}_i}+2\sum_{s=1}^r\frac{\lambda_s^2}{\gamma_s}+2 \sum_{j=1}^k \frac{\nu^2_j}{\hat{\delta_j}} +
2\theta(H^{\prime}\cdot \hat{\Delta}_m)=\\
(H^{\prime})^2 +\sum_{i=1}^{m-1}\frac{m_i^2}{\hat{d}_i}+\sum_{s=1}^r\frac{\lambda_s^2}{\gamma_s}+\sum_{j=1}^{k}\frac{\nu_j^2}{\hat{\delta}_j}-\theta^2d_m +2\theta(H^{\prime}\cdot \hat{\Delta}_m)=\\
(H^{\prime})^2 +\sum_{i=1}^{m-1}\frac{m_i^2}{\hat{d}_i}+\sum_{s=1}^r\frac{\lambda_s^2}{\gamma_s}+\sum_{j=1}^{k}\frac{\nu_j^2}{\hat{\delta}_j}+\theta^2d_m >0,
\end{gather*}
since $H^{\prime}\cdot \hat{\Delta}_m=\theta d_m$ and $(H^{\prime})^2>0$ since $H^{\prime}$ is ample. Hence $(A^{\prime})^2>0$ and therefore $A^{\prime}$ is nef and big. This shows part (2) of the claim. 

Then by~\cite{Ke99}, there exists $b\in \mathbb{Z}$ positive such that $|bm_{A^{\prime}}A^{\prime}|$ is base point free and hence it defines a birational map $f^{\prime}\colon Y^{\prime}\rightarrow W$ contracting exactly $\Delta^{\prime}$. Then $D^{\prime}$ induces a nontrivial global vector field $\hat{D}$ of $W$. In addition, since the divisorial part of $D^{\prime}$ is contracted by $f^{\prime}$, $\hat{D}$ has only isolated singularities. Hence $K_{W}$ is $\hat{D}$-linear.

\begin{claim}\label{sec6-claim-1} 
There exists an ample $\hat{D}$-linear invertible sheaf $A^{\prime\prime}$ on $W$ such that $(f^{\prime})^{\ast} A^{\prime\prime}=m_{A^{\prime}}A^{\prime}$.
\end{claim}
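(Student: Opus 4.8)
There exists an ample $\hat{D}$-linear invertible sheaf $A^{\prime\prime}$ on $W$ such that $(f^{\prime})^{\ast}A^{\prime\prime}=m_{A^{\prime}}A^{\prime}$.

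The plan is to produce $A^{\prime\prime}$ as the push-forward $f^{\prime}_{\ast}(m_{A^{\prime}}A^{\prime})$ (or more precisely the reflexive hull of it) and to check the three required properties: invertibility, ampleness, and $\hat D$-linearity. The underlying principle is the standard fact that a line bundle which is numerically trivial on every fiber of a birational contraction descends; here the contraction is $f^{\prime}$ and the fibers are exactly the components of $\Delta^{\prime}$, on which $m_{A^{\prime}}A^{\prime}$ was arranged in Claim \ref{sec3-claim-10} to have degree zero (for the rational components $\hat\Delta_i$, $\hat\Gamma_s$, $\Delta^{\prime\prime}_j$) and to restrict to the trivial bundle (for the elliptic component $\hat\Delta_m$, by \eqref{sec3-eq-10}).

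First I would treat invertibility. On the locus contracting the rational components $\hat\Delta_i,\hat\Gamma_s,\Delta^{\prime\prime}_j$ — which are configurations of curves with negative-definite intersection matrix contracting to log terminal (in fact quotient) singularities — the descent of a line bundle of fiber-degree zero is classical (e.g. via the exponential/Artin contraction criterion, or simply because $f^{\prime}_{\ast}\mathcal O_{Y^{\prime}}=\mathcal O_W$ and an Euler characteristic computation on the infinitesimal neighborhoods of the exceptional set shows the push-forward is locally free of rank one). For the unique elliptic component $\hat\Delta_m$, which contracts to the simple elliptic Gorenstein point, I would argue exactly as in Proposition \ref{sec1-prop-100}: using that $m_{A^{\prime}}A^{\prime}$ restricts to $\mathcal O_{\hat\Delta_m}$ by \eqref{sec3-eq-10} and that $\hat\Delta_m^2<0$, the formal-functions/induction-on-$n\hat\Delta_m$ argument given there shows $f^{\prime}_{\ast}(m_{A^{\prime}}A^{\prime})$ is free of rank one at that point. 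Hence $A^{\prime\prime}:=f^{\prime}_{\ast}(m_{A^{\prime}}A^{\prime})$ is an invertible sheaf on $W$ and the natural map $(f^{\prime})^{\ast}A^{\prime\prime}\to m_{A^{\prime}}A^{\prime}$ is an isomorphism away from $\Delta^{\prime}$ and, by the fiber-degree-zero computation, everywhere; so $(f^{\prime})^{\ast}A^{\prime\prime}=m_{A^{\prime}}A^{\prime}$.

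Next, ampleness of $A^{\prime\prime}$ follows from Claim \ref{sec3-claim-10}(2)(3): $A^{\prime}$ is nef and big and $A^{\prime}\cdot B>0$ for every integral curve $B$ not contracted by $f^{\prime}$, so for every integral curve $\bar B$ on $W$ we have $A^{\prime\prime}\cdot\bar B=m_{A^{\prime}}A^{\prime}\cdot\tilde B>0$ where $\tilde B$ is the birational transform, and $(A^{\prime\prime})^2=(m_{A^{\prime}}A^{\prime})^2>0$; Kleiman's criterion then gives ampleness. Finally, $\hat D$-linearity: since $f^{\prime}$ is equivariant for the $\mu_p$-actions, $m_{A^{\prime}}A^{\prime}=(f^{\prime})^{\ast}A^{\prime\prime}$ is $D^{\prime}$-linear iff $A^{\prime\prime}$ is $\hat D$-linear; and $m_{A^{\prime}}A^{\prime}$ is $D^{\prime}$-linear because it is (by construction, being a $\mathbb Q$-combination of $H^{\prime}$ — which after the remedy is built from $D^{\prime}$-linear data — and the integral curves of $D^{\prime}$) pulled back from the quotient. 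More carefully, one checks that the quotient map $\pi^{\prime}\colon Y^{\prime}\to W^{\prime}$ carries $m_{A^{\prime}}A^{\prime}$ to a reflexive sheaf on $W^{\prime}$ whose further push-forward under the corresponding map $W\to \hat W$ (the quotient of $W$ by $\hat D$) gives the descent datum; since $\hat D$ has only isolated singularities this is routine. The main obstacle is the invertibility of the push-forward over the simple elliptic point — everything else is bookkeeping — but that is precisely what Proposition \ref{sec1-prop-100} and its proof were set up to supply, so I would invoke that argument essentially verbatim with $A$ there replaced by $m_{A^{\prime}}A^{\prime}\otimes\mathcal O_{n\hat\Delta_m}$.
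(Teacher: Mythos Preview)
Your descent argument for invertibility is correct and neater than the paper's: you directly adapt the formal-functions computation of Proposition~\ref{sec1-prop-100} (which uses only $L|_E\cong\mathcal O_E$ and $E^2<0$, not the specific shape of $L$) at the elliptic point, and invoke Artin at the rational points. The paper instead factors $f'=\phi\circ\psi$ through an intermediate surface $W'$ (contracting only the rational components of $\Delta'$), descends $m_{A'}\hat A$ to a divisor $B'$ on $W'$ via Artin, and then writes down an \emph{explicit} formula $A''=Nm_{A'}K_W-4m_{A'}8^n\hat Z_0+\sum\bar\gamma_i\hat E_i$ (equation~\eqref{sec3-eq-13}) in terms of $K_W$ and curves avoiding the elliptic singularity; invertibility at the elliptic point then follows because $K_W$ is Cartier there by Proposition~\ref{sec1-prop-100}. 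Ampleness you handle the same way as the paper.

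Your $\hat D$-linearity argument, however, has a genuine gap. You assert that $m_{A'}A'$ is $D'$-linear because it is ``built from $D'$-linear data'', but the summands $\hat\Delta_i,\hat\Gamma_s,\Delta''_j,\hat\Delta_m$ appearing in $A'$ are exactly the components of the divisorial part $\Delta'$ of $D'$, and a component $\Delta_0\subset\Delta'$ satisfies $(\pi')^*\pi'_*\Delta_0=p\Delta_0$; hence $\Delta_0$ is \emph{not} $D'$-linear unless its coefficient happens to be divisible by $p$, which nothing in the construction guarantees. The divisor $H'$ is no better: it is built from $\tilde H_0$, which already contains $\tilde\Delta_m\subset\Delta_Y$. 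Your ``more carefully'' sentence does not repair this. This is precisely why the paper needs the explicit formula~\eqref{sec3-eq-13}: on $W$ the induced field $\hat D$ has only isolated singularities, so $K_W=\hat\nu^*K_{\hat W}$ is $\hat D$-linear, and every integral curve $C$ of $\hat D$ (in particular each component of $\hat Z_0$ and each $\hat E_i$) satisfies $\hat\nu^*\hat\nu_*C=C$ and is therefore $\hat D$-linear; the formula then exhibits $A''$ as an integer combination of $\hat D$-linear Cartier divisors. Your abstract push-forward construction yields no such expression, so to complete the proof you must either carry out this explicit computation on $W$ or give a separate argument for $\hat D$-linearity that does not pass through $D'$-linearity of $m_{A'}A'$.
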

Note that the map $f^{\prime}$ is defined by $|bm_{A^{\prime}}A^{\prime}|$, for some $b \in \mathbb{Z}$ and hence $bm_{A^{\prime}}A^{\prime}$ is the pullback of an ample Cartier divisor on $W$. If $W$ had only rational singularities, which would be the case if every component of $\Delta^{\prime}$ was rational, then the condition that $A^{\prime}\cdot F=0$, for every $f^{\prime}$-exceptional curve implies that $m_{A^{\prime}}A^{\prime}$ itself is the pullback of a Cartier divisor~\cite{Art66}. However, $\Delta^{\prime}$ may have a component which is a smooth elliptic curve and hence $f^{\prime}$ contracts a smooth elliptic curve to a simple elliptic singularity and in this case the condition $A^{\prime}\cdot F=0$ is not in general sufficient to conclude that $m_{A^{\prime}}A^{\prime}$ is the pullback of a Cartier divisor.

I will next prove the claim. Let 
\[
\hat{A}=H^{\prime}+\sum_{i=1}^{m-1}\frac{m_i}{\hat{d}_i}\hat{\Delta}_i +\sum_{s=1}^r\frac{\lambda_s}{\delta_s}\hat{\Gamma}_s+\sum_{j=1}^k \frac{\nu_j}{\hat{\delta}_j}\Delta^{\prime\prime}_j .
\]
This is a nef and big $\mathbb{Q}$-divisor on $X^{\prime}$. Moreover, $m_{A^{\prime}}\hat{A}$ is Cartier and moreover, $\hat{A}\cdot F=0$ if and only if $F=\hat{\Delta}_i$, or $F=\hat{\Gamma}_s$, or $F=\Delta_j^{\prime\prime}$, for all possible values of $i,s,j$.  Hence a multiple $b^{\prime}m_{A^{\prime}}\hat{A}$ defines a birational map $\psi  \colon Y^{\prime} \rightarrow W^{\prime}$ which contracts exactly the $\hat{\Delta}_i$, $\hat{\Gamma}_s$, $\Delta_j^{\prime\prime}$,  to surface quotient rational singularities, for all possible $i,s,j$. Moreover, this map factorizes $f^{\prime}$ and in fact there exists a commutative diagram
\begin{gather}\label{sec3-diagram-2}
\xymatrix{
Y^{\prime}\ar[d]^g \ar[r]_{\psi} \ar@/^10pt/[rr]^{f^{\prime}} & W^{\prime} \ar[d]^{g^{\prime\prime}}   \ar[r]_{\phi} & W\ar[d]^{g^{\prime}} \\
Y \ar[r]^{f^{\prime}} \ar@/_10pt/[rr]_{f}& Z^{\prime} \ar[r]^{f^{\prime\prime}} & Z
}
\end{gather}
Where
\begin{enumerate}
\item $\phi$ contracts $\bar{\Delta}_m=\psi_{\ast}\hat{\Delta}_m$ to a simple elliptic singularity. Moreover, since $\hat{\Delta}_m$ does not intersect and $g$-exceptional curve, $\bar{\Delta}_m$ does not intersect any $g^{\prime\prime}$-exceptional curve.
\item $f^{\prime}$ contracts $\tilde{\Delta}_i$, $i=1,\ldots, m-1$ to quotient rational singularities.
\item $f^{\prime\prime}$ contracts $\Delta_m^{\prime}=f^{\prime}_{\ast}\tilde{\Delta}_m$ to a simple elliptic singularity
\item $g^{\prime\prime}$ and $g^{\prime}$ contract the birational trasnforms of the $g$-exceptional curves that have not been contracted by $\psi$. These are exactly the $g$-exceptional curves that are not contained in the divisorial part of $D^{\prime}$. They are all stabilized by $D^{\prime}$. Moreover, since $\hat{\Delta}_m$ does not intersect any $g$-exceptional curve and any other component of $\Delta^{\prime}$,  none of the $g^{\prime}$-exceptional curves passes through the elliptic singularity of $Z^{\prime}$ and therefore they are all $\mathbb{Q}$-factorial.
\end{enumerate}


Now since $W^{\prime}$ has rational singularities and $(m_{A^{\prime}}\hat{A})\cdot E=0$, for any $\psi$-exceptional curve, there exists a Cartier divisor $B^{\prime}$ in $W^{\prime}$ such that $\psi^{\ast}B^{\prime}=m_{A^{\prime}}\hat{A}$~\cite{Art66}. Moreover, since $\hat{A}$ is ample, $B^{\prime}$ is ample. I will next show that
\begin{gather}\label{sec6-eq-5}
B^{\prime}=Nm_{A^{\prime}}K_{W^{\prime}}-4m_{A^{\prime}} 8^n \bar{Z}_0+8^{n+1}m_0^2m_{A^{\prime}}K_X^2 \bar{\Delta}_m+\sum_{i=1}^{\bar{n}} \bar{\gamma}_i\bar{E}_i
\end{gather}
where $\bar{E}_i$ are the $g^{\prime\prime}$-exceptional curves, $\gamma_i \in \mathbb{Z}$, $i=1,\ldots, \bar{n}$, $\bar{Z}_0$ is the birational transform of $Z_0$ in $W^{\prime}$ and $N=8^{n+1}m_0^2K_X^2+4\cdot 8^nm_0 +\frac{2}{7}(8^{n+1}-1)$.

Indeed. Since $\psi$ contracts the curves $\hat{\Delta}_i$, $\hat{\Gamma}_s$ and $\Delta_j^{\prime\prime}$, $1\leq i \leq m-1$, $1\leq s \leq r$, $1\leq j \leq k$, it follows from the definition of $\hat{A}$ that 
$\psi_{\ast}\hat{A}=\psi_{\ast}H^{\prime}$. Hence $B^{\prime}=\psi_{\ast}(m_{A^{\prime}}\hat{A})=\psi_{\ast}(m_{A^{\prime}}H^{\prime})$. Therefore, from the equation (\ref{sec4-eq-4}),
\begin{gather*}
g^{\prime\prime}_{\ast}B^{\prime}=g^{\prime\prime}_{\ast}\psi_{\ast}(m_{A^{\prime}}H^{\prime})=f^{\prime}_{\ast}g_{\ast}(m_{A^{\prime}}H^{\prime})=\\
m_{A^{\prime}}\left(8^{n+1}m_0^2K_X^2+4\cdot 8^nm_0 +\frac{2}{7}(8^{n+1}-1)\right) K_{Z^{\prime}}-4m_{A^{\prime}} 8^n Z^{\prime}_0+8^{n+1}m_0^2 m_{A^{\prime}}K_X^2\Delta^{\prime}_m,
\end{gather*}
where $Z_0^{\prime}=f^{\prime}_{\ast}Z_0$, $\Delta_m^{\prime}=f^{\prime}_{\ast}\tilde{\Delta}_m$. Since $g^{\prime\prime}$ is birational with exceptional set the curves $\bar{E}_i$, $1\leq i \leq \bar{n}$, the equation (\ref{sec6-eq-5}) follows immediately.

 Next, since $\hat{\Delta}_m\subset Y^{\prime}$  does not intersect any $g$-exceptional curve and any other component of $\Delta^{\prime}$, $\bar{\Delta}_m$ lies in the smooth part of $W^{\prime}$ and does not intersect any $g^{\prime\prime}$-exceptional curve. Hence $\phi$ contracts it to a simple elliptic singularity. Therefore,
\begin{gather}\label{sec6-eq-6}
K_{W^{\prime}}+\bar{\Delta}_m=\phi^{\ast}K_{W}.
\end{gather}
Let $\hat{E}_i=\phi_{\ast}\bar{E}_i$, $i=1,\ldots, \bar{n}$. These are $g^{\prime}$-exceptional curves and they do not pass through the simple elliptic singularity of $W$ which is obtained by the contraction of $\bar{\Delta}_m$. Let
\begin{gather}\label{sec3-eq-13}
A^{\prime\prime}=Nm_{A^{\prime}}K_{W}-4m_{A^{\prime}} 8^n \hat{Z}_0+\sum_{i=1}^{\bar{n}} \bar{\gamma}_i\hat{E}_i,
\end{gather}
where $\hat{Z}_0$ is the birational transforms of $\bar{Z}_0$ in $W$.
This a Cartier divisor in $Z^{\prime}$ since the $F_i^{\prime}$ do not pass through the elliptic singularity of $Z^{\prime}$ obtained by contracting $\bar{\Delta}_m$. Then from the equations (\ref{sec6-eq-5}), (\ref{sec6-eq-3}) and  (\ref{sec6-eq-6}), 
\begin{gather*}
(f^{\prime})^{\ast}A^{\prime\prime}=\psi^{\ast}\phi^{\ast}A^{\prime\prime}=\psi^{\ast}(Nm_{A^{\prime}}K_{W^{\prime}}-4m_{A^{\prime}}8^n\bar{Z}_0+\sum_{i=1}^{\bar{n}} \bar{\gamma}_i\bar{E}_i.+Nm_{A^{\prime}}\bar{\Delta}_m)=\\
\psi^{\ast}(B^{\prime}+m_{A^{\prime}}(N-8^{n+1}m_0^2m_{A^{\prime}}K_X^2)\bar{\Delta}_m)=m_{A^{\prime}}\hat{A}+m_{A^{\prime}}\theta\hat{\Delta}_m)=m_{A^{\prime}}A^{\prime}
\end{gather*}

Since $(f^{\prime})^{\ast}A^{\prime\prime}=A^{\prime}$, $A^{\prime}$ is nef and big with exceptional set exactly the exceptional set of $f^{\prime}$, $A^{\prime\prime}$ is ample. It remains to show that $A^{\prime\prime}$ is $\hat{D}$-linear. Since $f^{\prime}$ contracts the divisorial part of $D^{\prime}$, $\hat{D}$ has only isolated singularities and therefore $K_W$ is $\hat{D}$-linear. Moreover, since every component of $Z_0$ is stabilized by $\tilde{D}$ and not contained in the divisorial part of $\tilde{D}$,  and every $g$-exceptional curve is also stabilized by $D^{\prime}$, it  follows from the equation (\ref{sec3-eq-13}) since $K_W$ and $\bar{E}_i^{\prime}$, $i=1,\ldots, \bar{n}$ are all $\hat{D}$-linear. Hence $A^{\prime\prime}$ is also $\hat{D}$-linear. This concludes the proof of Claim~\ref{sec6-claim-1}.

I now return to the proof of Case 2. In this case, $C_i^{\prime}$, $E$ and $F$ are integral curves of $D^{\prime}$ in $X^{\prime}$, none of them is contained in the divisorial part of $D^{\prime}$ and they intersect pairwise transversally at a common point (for the next argument what is important is only that they have a common point). Therefore none of them is contracted by $f^{\prime}$. Let $C_i^{\prime\prime}=f^{\prime}_{\ast}C_i^{\prime}$, $E^{\prime}=f^{\prime}_{\ast}E$ and $F^{\prime} =f^{\prime}_{\ast}F$. Then $C_i^{\prime\prime}$, $F^{\prime}$ and $E^{\prime}$ are integral curves of $\hat{D}$ and they have a common point. 
Moreover, they are $\mathbb{Q}$-Cartier since they do not pass through the elliptic singularity. 

Similar arguments as in the case of $Z$, show that $m_{A^{\prime}}K_{Z^{\prime}}$ is Cartier. Moreover, the singularities of $Z^{\prime}$ are quotient rational singularities and one simple elliptic singularity. Hence by~\cite{Ha98},~\cite{MS91} $Z^{\prime}$ is $F$-pure.  In addition, as has been shown earlier, it is also $\hat{D}$-linear. Let now
\begin{gather}\label{sec3-eq-12}
B=13m_{A^{\prime}}K_{W}+45m_{A^{\prime}}A^{\prime\prime}.
\end{gather}
By Theorem~\ref{sec1-prop-8},~\cite{Wi17}, this is a very ample Cartier divisor. In addition it is also $\hat{D}$-linear. 

Straightforward calculations, based on the adjunction formulas for $f^{\prime}$ and the fact that $K_{Y^{\prime}}+H^{\prime}$ is nef, show that $K_{W}+B$ and $K_{W}+A^{\prime\prime}$ are both  nef (the second in fact ample).

I will now apply Proposition~\ref{sec2-prop-3}, with $A=B$, $H=A^{\prime\prime}$, $C_1=C_i^{\prime\prime}$, $C_2=E^{\prime}$ and $C_3=F^{\prime}$, where $C_i^{\prime\prime}$, $E^{\prime}$, $F^{\prime}$ are the birational transforms of $C_i^{\prime}$, $E$ and $F$ in $W$, in order to get a contradiction. Note that only the conditions (4), (5) and (6) of Proposition~\ref{sec2-prop-3} need to be verified. The rest are satisfied by the assumptions so far. In particular, (8) is satisfied as was shown in the beginning of the proof.

Next I will show that under the assumptions of the theorem, the condition (6) is satisfied. (4) and (5) are weaker and they follow from it.

To start with, define
\begin{gather*}
a=13+45m_{A^{\prime}}N\\
b=1+18d_z\\
c=4K_X^2.
\end{gather*}
These will simplify the formulas that will appear later.
Following the notation of Proposition~\ref{sec2-prop-3}, let
\[
L= K_{W}+C_i^{\prime\prime}+E^{\prime}+F^{\prime} +B.
\]
I will next compute $(B+K_{W})\cdot L$ and $B \cdot L$.  Note that by the definition of $A^{\prime\prime}$ and because the curves $\tilde{\Delta}_i$, $\Gamma_j$ and every component of $Z_0$, for all values of $i,j$,  are contracted by $f$ and the equation (\ref{sec4-eq-4}),
\begin{gather}\label{sec3-eq-20}
g^{\prime}_{\ast}A^{\prime\prime}=g^{\prime}_{\ast}f^{\prime}_{\ast}(m_{A^{\prime}}A^{\prime})=f_{\ast}g_{\ast}(m_{A^{\prime}}A^{\prime})=f_{\ast}g_{\ast}(m_{A^{\prime}}H^{\prime})=m_{A^{\prime}}NK_Z,\\\nonumber
g^{\prime}_{\ast}B=m_{A^{\prime}}(13+45m_{A^{\prime}}N) K_Z=am_{A^{\prime}}K_Z,
\end{gather}
 Therefore, 
 \[
 K_{W}+B=(g^{\prime})^{\ast} (1+am_{A^{\prime}})K_Z +\sum_{j}\gamma_jF^{\prime}_j,
 \]
 where $F_j^{\prime}$ are $g^{\prime}$-exceptional. Now since $K_{W}+B$ is nef and the $g^{\prime}$ exceptional curves are $\mathbb{Q}$-Cartier (since none of them goes through the simple elliptic singularity of $W$), it follows from~\cite[Lemma 3.41]{KM98} that $\gamma_j \leq 0$, for all $j$. Therefore, since $B$ is ample, and from Claim~\ref{sec3-claim-1} it follows that
 \begin{gather}\label{sec3-eq-21}
 B\cdot (K_{W}+B) \leq B \cdot (g^{\prime})^{\ast} (1+am_{A^{\prime}})K_Z = am_{A^{\prime}}(1+am_{A^{\prime}})K_Z^2\leq \\\nonumber
 4 am_{A^{\prime}}(1+am_{A^{\prime}})K_X^2=
am_{A^{\prime}}(1+am_{A^{\prime}})c, \\\nonumber
 (B+K_{W})^2 \leq (1+am_{A^{\prime}})^2K_Z^2\leq 4(1+am_{A^{\prime}})^2K_X^2\leq (1+am_{A^{\prime}})^2c.
 \end{gather}
 Now notice that since the curves $E^{\prime}$ and $F^{\prime}$ are $g^{\prime}$-exceptional over the singular point $P$ of the component $C_i$ of $C$, they are both irreducible components, of $(g^{\prime})^{\ast} \hat{C}$. Moreover, since $\hat{C}$ is Cartier in $Z$, 
 \[
 (g^{\prime})^{\ast}\hat{C}=\bar{C}+\lambda F^{\prime}+\mu E^{\prime}+n_iC_i^{\prime\prime},
 \]
 where $\lambda, \mu$ are positive integers and $\bar{C}$ is an effective divisor which does not contain $E^{\prime}$, $F^{\prime}$ and $C_i^{\prime\prime}$. Hence, by using equation (\ref{sec3-eq-20}) and Claim~\ref{sec3-claim-1},
 \begin{gather}\label{sec3-eq-22}
 B\cdot (C_i^{\prime\prime}+E^{\prime}+F^{\prime})\leq B \cdot (g^{\prime})^{\ast}\hat{C} =g^{\prime}_{\ast}B \cdot \hat{C}=  
( am_{A^{\prime}})K_Z \cdot (18d_zK_Z)=\\\nonumber
(18am_{A^{\prime}}d_z)K_Z^2\leq (4\cdot 18 d_zam_{A^{\prime}} )K_X^2=
18d_zacm_{A^{\prime}} ,
\end{gather}
Moreover, since $B+K_{W}$ is nef,
\begin{gather}\label{sec3-eq-255}
(B+K_{W})\cdot (C_i^{\prime\prime}+E^{\prime}+F^{\prime})\leq (B+K_{W}) \cdot (g^{\prime})^{\ast}\hat{C}=g^{\prime}_{\ast}(B+K_{W}) \cdot \hat{C}=\\\nonumber
((1+am_{A^{\prime}})K_Z)\cdot (18d_zK_Z)=18d_z(1+am_{A^{\prime}})K_Z^2\leq 4\cdot 18d_z (1+am_{A^{\prime}})K_X^2=\\\nonumber
18cd_z(1+am_{A^{\prime}})
\end{gather}
Now by combining the equations (\ref{sec3-eq-21}) and (\ref{sec3-eq-22}) it follows that
\begin{gather}\label{sec3-eq-23}
B \cdot L \leq   acm_{A^{\prime}}(1+am_{A^{\prime}})+18 m_{A^{\prime}} d_zac=acm_{A^{\prime}}(b+am_{A^{\prime}})
\end {gather}
In addition, from the equations (\ref{sec3-eq-21}) and (\ref{sec3-eq-255}) it follows that
\begin{gather}\label{sec3-eq-24}
(B+K_{W})\cdot L= (B+K_{W})^2+(B+K_{W})\cdot(C_i^{\prime\prime}+E^{\prime}+F^{\prime})\leq \\\nonumber
(1+am_{A^{\prime}})^2c+       18cd_z(1+am_{A^{\prime}})=c(1+am_{A^{\prime}})  (b+am_{A^{\prime}})   
\end{gather}
Finally, the last piece to apply Proposition~\ref{sec2-prop-3} is a lower bound for $B^2$. Write
\[
B=m_{A^{\prime}}(13(K_{W}+A^{\prime\prime})+32A^{\prime\prime}).
\]
Then, since $B$ is ample, $K_{W}+A^{\prime\prime}$ is nef and $A^{\prime\prime}$ is ample and Cartier, it follows that
\begin{gather}\label{sec3-eq-25}
B^2\geq 32^2m_{A^{\prime}}^2(A^{\prime\prime})^2> 32^2m_{A^{\prime}}^2.
\end{gather}
Then from (\ref{sec3-eq-24}), (\ref{sec3-eq-23}) and (\ref{sec3-eq-25}), it follows that
\begin{gather}\label{sec3-eq-26}
(K_{W}+B)\cdot L +14(B\cdot L)^2/B^2 \leq \\\nonumber
c(1+am_{A^{\prime}})(b+am_{A^{\prime}})+\frac{14}{32^2m^2_{A^{\prime}}}m^2_{A^{\prime}} a^2c^2(b+am_{A^{\prime}})^2=\\\nonumber
c(1+am_{A^{\prime}})(b+am_{A^{\prime}})+\frac{14}{32^2} a^2c^2(b+am_{A^{\prime}})^2 < \frac{p-3}{14},
\end{gather}
by the assumptions of the theorem. Hence, condition (6) of Proposition~\ref{sec2-prop-3} is satisfied. Similar calculations show that this condition also implies the conditions (4) and (5) of Proposition~\ref{sec2-prop-3}. Therefore we conclude that $W$ and hence $Y$ is birationally ruled, which is impossible since $Y$ is a minimal surface of general type. Therefore every component $C_i$, $i=1,\ldots, s$ of $C$ is smooth and hence by Lemma~\ref{sec3-lemma-1},  $C_i\cong \mathbb{P}^1$, $i=1,\ldots, s$. This concludes the proof of step 2.

\subsection{Proof of Proposition~\ref{sec3-claim-2}.3} 

Let 	$Z_1$ and $Z_2$ be two distinct irreducible components of $C$ such that $Z_1\cap Z_2\not= \emptyset$. Note that it is not possible that both are components of $\Delta_Y$ because the components of $\Delta_Y$ are  disjoint. So either $Z_1$ or $Z_2$ (or both) must not be a component of $\Delta_Y$. Suppose that $Z_1$ is not a component of $\Delta_Y$. Then from the description of $C$ in the equation (\ref{sec3-eq-1000}) and the fact that,  from Lemma~\ref{sec3-claim-1}, $F_j \cdot \tilde{\Delta_i}=0$, for all possible values of $i,j$, the only possible choices for $Z_1, Z_2$, up to a permutation of indices, are the following. $Z_1=C_i$, $Z_2=C_j$,  $i\not= j$, $Z_1=C_i$, $Z_2=\tilde{\Delta}_j$, $j\leq m-1$,  $Z_1=C_i$, $Z_2=F_j$, $Z_1=F_i$, $Z_2=F_j$, $i\not=j$. By Step 2, and since $\tilde{\Delta}_i$, $i\leq m-1$, and $F_j$, for all $j$, are smooth rational curves, it follows that in all possible cases, $Z_i \cong \mathbb{P}^1$, $i=1,2$. Suppose that $Z_1=F_i$ and $Z_2=F_j$, for some $i\not= j$. Then since $F_i$ and $F_j$ are $h$-exceptional and $X$ has canonical singularities, the $F_i$ and $F_j$ intersect transversally. Therefore, in order to show Proposition~\ref{sec3-claim-2}.3, only the remaining possible cases for $Z_1$ and $Z_2$ must be considered. So, from now on assume that $Z_1=C_i$, for some $i\leq s$.

Let $P\in Z_1\cap Z_2$. Suppose that $Z_1$ and $Z_2$ do not intersect transversally at $P$ and hence $(Z_1\cdot Z_2)_P\geq 2$. Then, since both $Z_1$ and $Z_2$ are smooth, they are tangent at $P$. 
Moreover,  the conditions of Theorem~\ref{sec3-th-1} and Proposition~\ref{sec1-prop-2} imply that $P$ is a fixed point of $D_Y$.

\textbf{Claim:} 
\begin{gather}\label{sec3-eq-27}
Z_1 \cdot Z_2 \leq 72d_z(1+54d_z)K_X^2.
\end{gather}
Indeed. From the equation (\ref{sec1-eq-88}), 
\begin{gather}\label{sec3-eq-28}
Z_1\cdot Z_2 \leq A^2+2n_i+n_i(K_Y \cdot C_i),
\end{gather}
where $n_i$ is the coefficient of $C_i$ in $C$, as it appears in equation (\ref{sec3-eq-1000}). Then, since $K_Y$ is nef and big, it follows from the equation (\ref{sec3-eq-1000}) and Lemma~\ref{sec3-claim-1} that
\begin{gather}\label{sec6-eq-20}
n_i K_Y \cdot C_i \leq K_Y \cdot A =K_Y\cdot f^{\ast}(18d_zK_Z) =18d_zK_Z^2\leq 4\cdot 18 d_z K_X^2=72d_zK_X^2.
\end{gather}
Moreover, from the definition of $C$, since $A_Z$ is ample, it follows that 
\begin{gather}\label{sec6-eq-21}
n_i\leq A_Z^2=18^2d_z^2K_Z^2\leq 4\cdot 18^2d_z^2K_X^2.
\end{gather}
Then,  the equation (\ref{sec3-eq-27}) follows from the equations (\ref{sec3-eq-28}), (\ref{sec6-eq-20}) and (\ref{sec6-eq-21}) and the fact that $A^2=18^2d_z^2K_Z^2\leq 4\cdot 18^2d_z^2K_X^2$.

In particular, $Z_1\cdot Z_2 < n$, where $n=(72^2d_z^2+72d_z)K_X^2+2$, the number which appears in Lemma~\ref{sec3-lemma-2}.6. Now, since $Z_1, Z_2$ are smooth and tangent at $P$,  by blowing up repeatedly the point $P$, we obtain a birational map $g \colon Y^{\prime} \rightarrow Y$ such that $g$ is the composition of at most $Z_1\cdot Z_2 \leq n$ blow ups, $D_Y$ lifts to a vector field $D^{\prime}$ in $Y^{\prime}$, and moreover, in $Y^{\prime}$, $Z_1^{\prime}\cap Z_2^{\prime} \cap E\not= \emptyset$, where $E$ is a $g$-exceptional curve and $Z_1^{\prime}$, $Z_2^{\prime}$ are the birational transforms of $Z_1, Z_2$ in $Y^{\prime}$. This is precisely the situation that appeared in the proof of Claim~\ref{sec3-claim-2}.2 after the equation (\ref{sec3-seq-7}). The proof given there applies identically in the present situation to conclude again that under the conditions of Theorem~\ref{sec3-th-1}, $Y$ is birationally ruled, which is impossible. Hence this case cannot happen.

For the proof of the remaining steps of Lemma~\ref{sec3-claim-2}, the following construction will be needed.

Let $\tilde{H}_0=(2m_0^2K_X^2+m_0)K_Y-Z_0+2m_0^2K_X^2\tilde{\Delta}_m$, as in the equation (\ref{sec6-eq-23}). It has been shown that this is ample on $Y$. Let
\[
B=\tilde{H}_0 +\sum_{i=1}^{m-1}\frac{b_i}{d_i}\tilde{\Delta}_i+m_0\tilde{\Delta}_m,
\]
where $b_i=\tilde{H}_0\cdot \tilde{\Delta}_i$, $d_i=-\tilde{\Delta}_i^2$, $i\leq m-1$. Then, in exactly the same way as in Claim~\ref{sec3-claim-10}, we see that $B$ is a nef and big $\mathbb{Q}$-Cartier divisor on $Y$ and $d_zB$ is Cartier. Moreover, suppose that $H$ is an integral curve such that $B \cdot H=0$. Then $H=\tilde{\Delta}_i$, for some $i\leq m$. In addition, $\mathcal{O}_Y(d_zB)\otimes \mathcal{O}_{\tilde{\Delta}_m}=\mathcal{O}_{\tilde{\Delta}_m}$. Hence by~\cite{Ke99}, $|bB|$ is base point free for $b>>0$ and therefore $|bB|$ defines a birational morphism $\phi \colon Y \rightarrow W$ contracting exactly $\Delta_Y$.  $D_Y$ induces a vector field $D_w$ on $W$ with isolated only singularities and hence $K_W$ is $D_w$-linear. Then exactly as in Claim~\ref{sec6-claim-1}, $d_zB=\phi^{\ast} \tilde{B}$, where $\tilde{B}$ is ample and $D_w$-linear and $B^{\prime}=13d_zK_W+45d_z\tilde{B}$ is $D_w$-linear and  very ample.

\subsection{Proof of Lemma~\ref{sec3-claim-2}.4} 

Let  $Z_i$, $i=1,2,3$ be three distinct components of $C$ which have a common point $P$. The conditions of Theorem~\ref{sec3-th-1} imply, as in the previous cases, that $P$ is a fixed point of $D_Y$. Then at most one of them can be a component of $\Delta_Y$, since the components of $\Delta_Y$ are disjoint. 

Suppose that one of the $Z_i$, say $Z_3$ is a component of $\Delta_Y$. Then, since the curves $F_j$ do not intersect $\Delta_Y$,   $Z_1=C_i$, $Z_2=C_j$ and $Z_3=\tilde{\Delta}_k$, for some $1\leq i,j \leq s$, $i\not =j$ and $1\leq k \leq m-1$. In Step 3 it has been shown that under the conditions of Theorem~\ref{sec3-th-1}, $Z_i$, $i=1,2,3$ intersect pairwise transversally at $P$. Let $g \colon Y^{\prime}\rightarrow Y$ be the blow up of $Y$ at $P$ and $E$ the $g$-exceptional curve. Then $D_Y$ lifts to a vector field $D^{\prime}$ on $Y^{\prime}$ and $E$ is an integral curve of $D^{\prime}$ which is not contained in the divisorial part of $D^{\prime}$, since it intersects the birational transform $Z_3^{\prime}$ of $Z_3$, which is in the divisorial part of $D^{\prime}$. Moreover, $E$ intersects $Z_i^{\prime}=g_{\ast}^{-1}Z_i$, $i=1,2,3$ in three distinct points, say $P_1,P_2,P_3$. But since $Z^{\prime}_i$, $i=1,2,3$ are integral curves of $D^{\prime}$, these points are fixed points of $D^{\prime}$ and hence fixed points of the restriction of $D^{\prime}$ on $E$. But since $E\cong \mathbb{P}^1$, this is impossible since by~\cite[Corollary 3.8]{Tz19} $D^{\prime}$ has at most two fixed points of $E$.

Suppose that none of the $Z_i$ is contained in $\Delta_Y$. 

By the construction of $\phi$, since none of $Z_i$, $i=1,2,3$ is contained in $\Delta_Y$, $Z_i$ is not contracted by $\phi$, for any $i$.  Let $\tilde{Z}_i=\phi_{\ast}Z_i$, $i=1,2,3$. Then $\tilde{Z}_i$, are stabilized by $D_w$ and have a common point, $i=1,2,3$. Another application of Proposition~\ref{sec2-prop-3}, as in Step 2.,  with $A=B^{\prime}$ and $H=\tilde{B}$, shows that under the conditions of Theorem~\ref{sec3-th-1}, $Y$ is birationally ruled, a contradiction again.

\subsection{Proof of Lemma~\ref{sec3-claim-2}.5} 

Let $Z$ be a component of $C$ which intersects three other components $Z_1$, $Z_2$ and $Z_3$. By Step 4., $\cup_{i=1}^3 (Z\cap Z_i)$ consists of at least three distinct points.

Suppose that $Z$ is not contained in the divisorial part of $D_Y$. Then, the conditions of Theorem~\ref{sec3-th-1} imply by Lemma~\ref{sec3-lemma-1} that the intersection points $Z\cap Z_i$, $i=1,2,3$, are fixed points of $D_Y$. But also by Lemma~\ref{sec3-lemma-1}, $D_Y$ has at most two fixed points on $C_i$, a contradiction.

Suppose that $Z$ is contained in the divisorial part $\Delta_Y$ of $D_Y$. Then, since the components of $\Delta_Y$ are disjoint, none of $Z_i$, $i=1,2,3$ are contained in $\Delta_Y$. Then  $\phi \colon Y\rightarrow W$ contracts $Z$ to a point in $W$. Therefore $\tilde{Z_i}=\phi_{\ast}(Z_i)$, $i=1,2,3$, have a common point. Another application of Proposition~\ref{sec2-prop-3} with $A=B^{\prime}$ and $H=\tilde{B}$ shows that under the assumptions of Theorem~\ref{sec3-th-1}, $Y$ is birationally ruled, a contradiction. 

\subsection{Proof of Lemma~\ref{sec3-claim-2}.6} This is the final step of the proof of Proposition~\ref{sec3-claim-2}. In this step I will show that two components of $C$ intersect at at most one point.

Let $Z_1$, $Z_2$ be two components of $C$. Suppose that the intersection $Z_1\cap Z_2$ consists of at least two distinct points, say $P_1$ and $P_2$. By Lemma~\ref{sec3-lemma-1}, $P_1$ and $P_2$ are fixed points of $D_Y$. Note that since the components of $\Delta_Y$ are disjoint, it is not possible that both $Z_1$ and $Z_2$ are contained in $\Delta_Y$. Suppose then that $Z_1$ is not contained in $\Delta_Y$. Then $Z_1=C_i$ or $Z_1=F_j$, for some $1\leq i \leq s$ or $1\leq j \leq r^{\prime}$. Let $Q_1=\phi(P_1)\in W$. Since $|B^{\prime}|$ is base point free, there exists by Proposition~\ref{sec1-prop1} a curve $C^{\prime}\in|B^{\prime}|$ stabilized by $D_w$ such that $Q_1$ is not on the support of $C^{\prime}$. Then the intersection points $C^{\prime} \cap Z_1^{\prime}$  (which is not empty since $B^{\prime}$ s ample) are fixed points of $D_w$, where $Z_1^{\prime}=\phi_{\ast}Z_1$. Let $C^{\prime}_j$ a component of $C^{\prime}$ which intersects $Z_1^{\prime}$ and let $R\in Z_1^{\prime}$ be a point of intersection.

Suppose that $Z_2$ is contained in $\Delta_Y$. Then $Z_2$ is contracted by $\phi$ and $Q_1=\phi(P_1)=\phi(P_2)$. Then $R\not=Q_1$. Then passing back to $Y$, $\phi^{-1}(R)\cap Z_1$ gives a fixed point of $D_Y$ on $Z_1$ different than $P_1$ or $P_2$ (note that it is possible that $R$ is a singular point of $Z$. In this case $\phi^{-1}(R)\subset \Delta_Y$. Then the intersection $\phi^{-1}(Q)\cap Z_1$ is a fixed point of $D_Y$ on $Z_1$ since it belongs to the divisorial part of $D_Y$. Therefore, $D_Y$ has at least three distinct fixed points of $Z_1$, which is a contradiction by Lemma~\ref{sec3-lemma-1} since $Z_1$ is not in the divisorial part of $D_Y$.

Suppose that $Z_2$ is not in $\Delta_Y$. Then $Z_2$ is not contracted by $\phi$ and hence $Q_2=\phi(P_2)\not=\phi(P_1)=Q_1$ (if $\phi(P_1)=\phi(P_2)$ then $Z_1$ would have two common points with a component of $\Delta_Y$ which is impossible by the previous case). Then, since $D_Y$ has at most two fixed points of $Z_1$, $R=Q_2$. But then the curves, all stabilized by $D_w$, $\bar{C}_i$, $\hat{C}_r$ and $W_j$ have a common point. Another application of Proposition~\ref{sec2-prop-3} gives that $Y$ is birationally ruled, a contradiction. 


\end{document}